\newcommand{\bburl}[1]{\textcolor{blue}{\url{#1}}}
\newcommand{\be}{\begin{equation}}
\newcommand{\ee}{\end{equation}}
\DeclareMathOperator{\sgn}{sgn}
\newtheorem{thm}{Theorem}[section]
\newtheorem{cor}[thm]{Corollary}
\newtheorem{lem}[thm]{Lemma}
\newtheorem{prop}[thm]{Proposition}
\newtheorem{exa}[thm]{Example}
\newtheorem{defi}[thm]{Definition}
\newtheorem{rek}[thm]{Remark}
\DeclareMathOperator{\supp}{supp}
\DeclareMathOperator{\spann}{span}
\numberwithin{equation}{section}
\title[TGA and Larger Greedy Sums]{Performance of the Thresholding Greedy Algorithm with Larger Greedy Sums}
\author{H\`ung Vi\d{\^e}t Chu}
\email{\textcolor{blue}{\href{mailto:hungchu2@illinois.edu}{hungchu2@illinois.edu}}}
\address{Department of Mathematics, University of Illinois at Urbana-Champaign, Urbana, IL 61820, USA}
\begin{document}

\begin{abstract} 
The goal of this paper is to study the performance of the Thresholding Greedy Algorithm (TGA) when we increase the size of greedy sums by a constant factor $\lambda\geqslant 1$. We introduce the so-called $\lambda$-almost greedy and $\lambda$-partially greedy bases. The case when $\lambda = 1$ gives us the classical definitions of almost greedy and (strong) partially greedy bases. We show that a basis is almost greedy if and only if it is $\lambda$-almost greedy for all (some) $\lambda \geqslant 1$. However, for each $\lambda > 1$, there exists an unconditional basis that is $\lambda$-partially greedy but is not $1$-partially greedy. Furthermore, we investigate and give examples when a basis is 
\begin{enumerate}
    \item  not almost greedy with constant $1$ but is $\lambda$-almost greedy with constant $1$ for some $\lambda > 1$, and 
    \item  not strong partially greedy with constant $1$ but is $\lambda$-partially greedy with constant $1$ for some $\lambda > 1$. 
\end{enumerate}
Finally, we prove various characterizations of different greedy-type bases. 
\end{abstract}

\subjclass[2020]{41A65; 46B15}

\keywords{Greedy, almost greedy, partially greedy, reverse partially greedy bases, characterizations}
\thanks{The author is thankful to Timur Oikhberg for helpful feedback on earlier drafts of this paper. The author would like to thank the anonymous referee for a careful reading and suggestions that help improve the exposition of this paper.}

\maketitle

\tableofcontents

\section{Introduction}

\subsection{Setup and terminology}
Let $(X,\|\cdot\|)$ be an infinite-dimensional Banach space over $\mathbb{K} = \mathbb{R}$ or $\mathbb{C}$ with the dual $(X^*, \|\cdot\|_{*})$. Let $\mathcal{B} = (e_n)_{n=1}^\infty$ be a countable collection of vectors in $X$ such that 
\begin{enumerate}
    \item There exists a unique collection of biorthogonal functionals $(e^*_n)_{n=1}^\infty$ satisfying $e^*_i(e_j) = \delta_{i,j}$ for all $i, j\in\mathbb{N}$. 
    \item M-boundedness: $\sup_{n} \|e_n\|\|e_n^*\|_{*} < \infty$ and semi-normalization: $$0 \ <\ \inf_{n} \|e_n\|\ \leqslant\ \sup_{n}\|e_n\|\ <\ \infty.$$ 
    \item $X = \overline{\spann\{e_n: n\in \mathbb{N}\}}$.
    \item $X^* = \overline{\spann\{e^*_n: n\in \mathbb{N}\}}^{w^*}$.
\end{enumerate}
Then $\mathcal{B}$ is called a basis.
Note that M-boundedness and semi-normalization imply that $0 < \inf_{n}\|e_n^*\|_{*} \leqslant \sup_n \|e_n^*\|_{*} < \infty$.
Indeed,
for all $m\in\mathbb{N}$, we have $\|e^*_m\|_{*}\|e_m\|\geqslant |e_m^*(e_m)| = 1$, so $\|e_m^*\|_{*}\geqslant \frac{1}{\sup \|e_n\|}$. Moreover, let $c$ be such that $\sup_{n}\|e_n^*\|_{*}\|e_n\|\leqslant c$, then for all $m\in \mathbb{N}$, $\|e_m^*\|_{*}\leqslant \frac{c}{\inf \|e_n\|}$.

With respect to a basis $\mathcal{B}$, every $x\in X$ is uniquely associated with the formal series $\sum_{n=1}^\infty e_n^*(x)e_n$ with $\lim_{n\rightarrow\infty}e_n^*(x)=0$. If the series converges to $x$ for all $x\in X$, we have a Schauder basis. Throughout this paper, we use the following notation
\begin{enumerate}
    \item[i)] For $x\in X$, $\supp(x):= \{n: e_n^*(x)\neq 0\}$ and $\|x\|_\infty := \max_n |e_n^*(x)|$.
    \item[ii)] $X_c := \{x\in X: |\supp(x)|<\infty\}$.
    \item[iii)] $\mathbb N_0 := \mathbb{N}\cup \{0\}$.
    \item[iv)] $\mathbb{N}^{<\infty}:= \{F\subset\mathbb{N}: |F|<\infty\}$.
    \item[v)] For each $A\in \mathbb{N}^{<\infty}$, $$P_A(x) \ :=\ \sum_{n\in A}e_n^*(x)e_n\mbox{ and }P_{A^c}(x) \ :=\ x-P_A(x).$$
    \item[vi)] A sign $\varepsilon = (\varepsilon_n)_{n=1}^\infty$ is a sequence of scalars with modulus $1$.
    \item[vii)] For $z\in \mathbb{K}$, define
    $$\sgn(z) \ :=\ \begin{cases}z/|z|&\mbox{ if }z\neq 0,\\ 1 &\mbox{ if } z = 0.\end{cases}$$
    \item[viii)] For $A\in \mathbb{N}^{<\infty}$ and a sign $\varepsilon$, 
    $$1_A \ :=\ \sum_{n\in A}e_n \mbox{ and } 1_{\varepsilon A}\ :=\ \sum_{n\in A}\varepsilon_n e_n.$$
    \item[ix)] For $A, B\subset \mathbb{N}$ and $x\in X$, we write $A \sqcup B\sqcup x$ to mean that $A, B$, and $\supp(x)$ are pairwise disjoint. We write $A < B$ if $a < b$ for all $a\in A$, $b\in B$. Similar meanings apply for other inequalities.  
\end{enumerate}

We wish to approximate each vector $x\in X$ with a finite linear combination from the basis $\mathcal{B}$. Consider $x \sim \sum_{n=1}^\infty e_n^*(x)e_n$. In 1999, Konyagin and Temlyakov introduced the Thresholding Greedy Algorithm (TGA) that chooses the largest coefficients $e_n^*(x)$ (in modulus)  to be used in the approximation. In particular, a  set $\Lambda\in\mathbb{N}^{<\infty}$ is called a \textit{greedy set} of $x$ if $\min_{n\in\Lambda} |e_n^*(x)|\geqslant \max_{n\notin \Lambda}|e_n^*(x)|$. For $m\in \mathbb{N}_0$, let 
$$\mathcal{G}(x, m)\ :=\ \{\Lambda\subset\mathbb{N}\,:\, \Lambda\mbox{ is a greedy set of }x\mbox{ and }|\Lambda| = m\}.$$
A \textit{greedy operator} of order $m$, denoted by $G_m$, satisfies the following: for $x\in X$,  
$$G_m(x)\ : =\ \sum_{n\in \Lambda_x}e_n^*(x)e_n, \mbox{ for some }\Lambda_x\in\mathcal{G}(x,m).$$
The TGA maps $x$ to a sequence of greedy sums $(G_m(x))_{m=1}^\infty$ for some greedy operators $G_m$.
We capture the error term from the greedy algorithm by 
$$\gamma_m(x)\ :=\ \sup\left\{\|x-P_{\Lambda}(x)\|\,:\, \Lambda\in \mathcal{G}(x, m)\right\},$$
and measure the efficiency of the greedy algorithm by comparing $\gamma_m(x)$ against
the smallest possible error of an arbitrary $m$-term approximation
$$\sigma_m(x) \ :=\ \inf\left\{\left\|x-\sum_{n\in A}a_ne_n\right\|\,:\, A\subset \mathbb{N}, |A|\leqslant m, (a_n)\subset \mathbb{K}\right\}.$$
We also compare $\gamma_m(x)$ against the smallest projection error
$$\widetilde{\sigma}_m(x) \ :=\ \inf\left\{\left\|x-P_A(x)\right\|\,:\, A\subset \mathbb{N}, |A| = m\right\}.$$
More restrictively, we compare $\gamma_m(x)$ against the smallest partial sum error
$$\widehat{\sigma}_m(x)\ :=\ \inf\left\{\|x-S_n(x)\|\,:\, 0\leqslant n\leqslant m\right\},$$
where $S_n(x) := \sum_{i=1}^n e_n^*(x)e_n$.

For conciseness, let ``w.const" stand for ``with constant". Below we define various properties of bases that are of interest. 
\begin{defi}\normalfont
A basis $\mathcal{B}$ is said to be 
\begin{enumerate}
\item \textit{unconditional} if there exists $C\geqslant 1$ such that
$$\|P_A (x)\|\ \leqslant\ C\|x\|,\forall x\in X, \forall A\subset\mathbb{N}.$$
We say that $\mathcal B$ is suppresion unconditional w.const $C$. The least $C$ is denoted by $\mathbf K_s$. When $\mathcal{B}$ is unconditional, there also exists a constant $C\geqslant 1$ such that
$$\left\|\sum_{n=1}^N a_n e_n\right\|\ \leqslant\ C\left\|\sum_{n = 1}^N b_n e_n\right\|, \forall N\in \mathbb{N}, \forall a_n, b_n\in \mathbb{K} \mbox{ with }|a_n|\leqslant |b_n|.$$
We say that $\mathcal{B}$ is unconditional w.const $C$. The least $C$ is denoted by $\mathbf K_u$. It is easy to verify that $\mathbf K_s \leqslant \mathbf K_u\leqslant 2\mathbf K_s$.
\item \textit{quasi-greedy} if there exists $C\geqslant 1$ such that
\begin{equation}
    \label{e5}\gamma_m(x)\ \leqslant\ C\|x\|, \forall x\in X, \forall m\in \mathbb{N}_0.
\end{equation}
In this case, we say that $\mathcal B$ is quasi-greedy or particularly, suppression quasi-greedy w.const $C$. The least such $C$ is denoted by $\mathbf C_\ell$.

It is worth noting that quasi-greedy bases can be defined through the convergence of $(G_m(x))_{m=1}^\infty$ to $x$. By \cite[Theorem 4.1]{AABW}, the two definitions are equivalent.
\item \textit{greedy} if there exists $C\geqslant 1$ such that
\begin{equation}\label{e2}\gamma_m(x)\ \leqslant\ C\sigma_m(x),\forall x\in X, \forall m\in \mathbb{N}_0.\end{equation}
In this case, we say that $\mathcal B$ is greedy w.const $C$. The least $C$ is denoted by $\mathbf C_g$.
\item \textit{almost greedy} if there exists $C\geqslant 1$ such that 
\begin{equation}\label{e3}\gamma_m(x) \ \leqslant\ C\widetilde{\sigma}_m(x), \forall x\in X, \forall m\in \mathbb{N}_0.\end{equation}
We say that $\mathcal B$ is almost greedy w.const $C$. The least such $C$ is denoted by $\mathbf C_a$.
\item \textit{strong partially greedy} if there exists $C\geqslant 1$ such that
\begin{equation}\label{e30}\gamma_m(x) \ \leqslant\ C\widehat{\sigma}_m(x), \forall x\in X, \forall m\in \mathbb{N}_0.\end{equation}
The least such $C$ is denoted by $\mathbf C_p$. Originally, partially greedy bases were introduced by Dilworth et al. \cite{DKKT} for Schauder bases; later, Berasategui et al. defined strong partially greedy bases for general bases \cite{BBL}. While the two concepts coincide for Schauder bases, we shall use the latter as we work with general bases. 
\item \textit{democratic} if there exists $C\geqslant 1$ such that
\begin{equation}\label{e11} \|1_A\|\ \leqslant\ C\|1_B\|,\end{equation}
for all $A, B\in \mathbb{N}^{<\infty}$ with $|A| = |B|$. The smallest such $C$ is denoted by $\mathbf C_{d}$.
\end{enumerate}
\end{defi}

\subsection{Main results}
Greedy bases are desirable because for these bases, an $m$-term approximation by the greedy algorithm produces essentially the best $m$-term approximation. However, it turns out that the greedy condition \eqref{e2} is so strong that it excludes many classical Banach spaces such as $\ell_p\oplus \ell_q (1\leqslant  p < q \leqslant \infty)$,  
$\left(\oplus_{n=1}^\infty \ell_p^n\right)_{\ell_1} (1 < p\leqslant \infty)$, $\left(\oplus_{n=1}^\infty\ell_p^n\right)_{c_0} (1\leqslant p<\infty)$, and $\left(\oplus \ell_p\right)_{\ell_q} (1\leqslant p\neq q < \infty)$ (see \cite[Theorem 1]{DFOS} and \cite[Theorem 1]{Sc}.) However, almost greedy bases are much more common as can be seen from \cite[Remark 7.7]{DKK}, which states that if a Banach space $X$ contains a complemented copy of $\ell_p$ for some $1\leqslant p < \infty$, then $X$ has infinitely many inequivalent almost greedy bases. Surprisingly, almost greedy bases are not far away from being greedy due to the following theorem

\begin{thm}\cite[Theorem 3.3]{DKKT}\label{mt}
Let $\mathcal{B}$ be a basis of a Banach space. The following are equivalent
\begin{enumerate}
    \item[i)] $\mathcal{B}$ is almost greedy. 
    \item[ii)] For some (every) $\lambda > 1$,  there exists a constant $C_{\lambda} \geqslant 1$ such that 
    \begin{equation}\label{e1}\gamma_{\lceil\lambda m\rceil}(x)\ \leqslant\ C_{\lambda}\sigma_m(x), \forall x\in X, \forall m\in \mathbb{N}_0.\end{equation}
\end{enumerate}
\end{thm}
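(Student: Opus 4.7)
My plan is to treat the two implications separately. For $(\text{ii})\Rightarrow(\text{i})$, I reduce to the Dilworth--Kalton--Kutzarova--Temlyakov characterization that almost greediness is equivalent to quasi-greedy together with democratic. Applying (ii) with the trivial approximation $y=0$ gives $\gamma_{\lceil\lambda m\rceil}(x)\leqslant C_\lambda\|x\|$, and M-boundedness fills in greedy sums of sizes $n$ not of the form $\lceil\lambda m\rceil$ (consecutive such sizes differ by at most a constant depending on $\lambda$, and each extra coefficient contributes a bounded multiple of $\|x\|$ through $\sup_n\|e_n\|\|e_n^*\|_*$), yielding quasi-greediness. For democracy, I test (ii) on $x=1_A+(1+\varepsilon)1_B$ with disjoint $A,B$, $|A|=m$ and $|B|=\lceil\lambda m\rceil$: the unique greedy set of size $\lceil\lambda m\rceil$ is $B$ (its coefficients strictly dominate), so $\gamma_{\lceil\lambda m\rceil}(x)=\|1_A\|$, while $\sigma_m(x)\leqslant(1+\varepsilon)\|1_B\|$ via the choice $y=1_A$. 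Letting $\varepsilon\to 0^+$ gives $\|1_A\|\leqslant C_\lambda\|1_B\|$; combining with the dual test vector $(1+\varepsilon)1_A+1_B$ with $|A|=\lceil\lambda m\rceil$, $|B|=m$ and routing through an intermediate reference set of size $\lceil\lambda m\rceil$ delivers symmetric democracy, after which the DKKT characterization closes this direction.

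For $(\text{i})\Rightarrow(\text{ii})$, almost greediness gives $\gamma_{\lceil\lambda m\rceil}(x)\leqslant\mathbf{C}_a\,\widetilde\sigma_{\lceil\lambda m\rceil}(x)$, so the task reduces to establishing $\widetilde\sigma_{\lceil\lambda m\rceil}(x)\leqslant C'_\lambda\sigma_m(x)$. Given a near-optimal $y=\sum_{n\in A}a_n e_n$ with $|A|=m$ and $\|x-y\|\leqslant(1+\varepsilon)\sigma_m(x)$, I set $B=A\sqcup D$ where $D\subset A^c$ consists of the $\lceil\lambda m\rceil-m$ indices with largest $|e_n^*(x)|$ outside $A$. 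Since $\supp(y)\subset A\subset B$,
$$x-P_B(x)\;=\;P_{A^c}(x)-G_{|D|}(P_{A^c}(x)),$$
and this is bounded by $\mathbf{C}_\ell\,\|P_{A^c}(x)\|$ via quasi-greediness. Controlling $\|P_{A^c}(x)\|$ by a multiple of $\|x-y\|$ uses democracy together with the truncation inequality $\tau\|1_E\|\leqslant C\|\sum_{n\in E}\alpha_n e_n\|$ (valid for QG bases on supports where $|\alpha_n|\geqslant\tau$); the reserved $(\lambda-1)m$ coefficients in $D$ supply the slack that makes this cascade close.

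The main obstacle is precisely this last step, $\|P_{A^c}(x)\|\leqslant C\|x-y\|$, in direction $(\text{i})\Rightarrow(\text{ii})$. A uniform inequality $\widetilde\sigma_m\leqslant C\sigma_m$ would collapse almost-greediness into full greediness (strictly stronger, as the Introduction emphasizes through the examples $\ell_p\oplus\ell_q$ and similar spaces), so any valid proof must genuinely exploit the factor $\lambda>1$. Operationally, the ``bad'' coefficients of $x-y$ supported on $A$ that obstruct a direct projection bound are absorbed by the $(\lambda-1)m$ additional indices in $D$ via the same democracy/truncation cascade that underpins the classical proof that quasi-greedy and democratic bases are almost greedy, yielding the desired constant $C'_\lambda$.
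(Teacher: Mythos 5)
The paper does not prove this statement itself --- it is imported verbatim as \cite[Theorem 3.3]{DKKT} --- so your attempt can only be measured against the surrounding machinery (Lemmas \ref{l2} and \ref{l4}, Theorems \ref{m2} and \ref{m3}). Your quasi-greediness extraction in (ii)$\Rightarrow$(i) is fine and is exactly the paper's Lemma \ref{l4}. The democracy step, however, has a gap: both of your test vectors yield only the \emph{upward} comparison $\|1_A\|\leqslant C_\lambda\|1_B\|$ for disjoint sets with $|B|=\lceil\lambda|A|\rceil$ (the paper's $\lambda$-democracy). Your ``routing through an intermediate reference set $D$ of size $\lceil\lambda m\rceil$'' produces $\|1_A\|\leqslant C\|1_D\|$ and $\|1_B\|\leqslant C\|1_D\|$ --- two upper bounds by the same larger set --- which do not combine into $\|1_A\|\leqslant C'\|1_B\|$ for $|A|=|B|$. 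The missing ingredient is the subadditivity argument of Lemma \ref{l2}: partition $A$ into $\lceil 2\lambda\rceil$ pieces of cardinality at most $|B|/\lambda$ and dominate each piece by $B$ (using quasi-greediness to dispose of $A\cap B$ when the sets are not disjoint). This is repairable, but the mechanism you describe does not close.

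The direction (i)$\Rightarrow$(ii) contains a structural error that cannot be patched as stated. You reduce to proving $\|P_{A^c}(x)\|\leqslant C\|x-y\|$ for $y$ supported on the $m$-set $A$ with $\|x-y\|\leqslant(1+\varepsilon)\sigma_m(x)$. Since $\widetilde{\sigma}_m(x)\leqslant\|x-P_A(x)\|=\|P_{A^c}(x)\|$, this uniform inequality would give $\widetilde{\sigma}_m(x)\leqslant C(1+\varepsilon)\sigma_m(x)$, and combined with almost greediness, $\gamma_m(x)\leqslant \mathbf C_a C\sigma_m(x)$ --- i.e., the basis would be greedy. Almost greedy bases need not be greedy (as the Introduction stresses), so the inequality fails for some of them, and no democracy/truncation cascade can establish it. The root cause is that you apply quasi-greediness \emph{first}, writing $\|x-P_B(x)\|\leqslant\mathbf C_\ell\|P_{A^c}(x)\|$; this eliminates $D$ from the problem before the $(\lambda-1)m$ reserved indices can supply any slack, leaving an inequality in which $\lambda$ no longer appears. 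A correct argument must keep $D$ in play: with $\beta:=\max_{n\notin A\cup D}|e_n^*(x-y)|$, split off the level set $H=\{n:|e_n^*(x-y)|>\beta\}\subseteq A\cup D$, which \emph{is} a greedy set of $x-y$ so that $\|(x-y)-P_H(x-y)\|\leqslant\mathbf C_\ell\|x-y\|$, and then control the residual projection $P_{(A\cup D)\setminus H}(x-y)$ --- all of whose coefficients are at most $\beta$ and whose cardinality is at most a $\lambda$-dependent multiple of $|D|$ --- by $\beta\|1_{(A\cup D)\setminus H}\|\lesssim\beta\|1_{D}\|\lesssim\|x-y\|$ via democracy and the truncation bound. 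That is the point where $\lambda>1$ genuinely enters, and it is absent from your decomposition.
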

Hence, for an almost greedy basis, a $\lceil \lambda m\rceil$-term approximation by the greedy algorithm gives essentially the best $m$-term approximation. In this sense, one may argue that being almost greedy is not far away from being greedy.

In the same spirit, we would like to know what happens if we enlarge greedy sums in the almost greedy condition \eqref{e3} instead of \eqref{e2}. Does our basis lose the almost greedy property? This motivates the following definition and our first main result.
\begin{defi}\normalfont
Let $\lambda \geqslant 1$. 
A basis $\mathcal B$ is said to be $\lambda$-almost greedy if there exists a constant $C_{\lambda} \geqslant 1$ such that 
     \begin{equation}\label{e4}\gamma_{\lceil\lambda m\rceil}(x)\ \leqslant\ C_{\lambda}\widetilde{\sigma}_m(x), \forall x\in X, \forall m\in \mathbb{N}_0.\end{equation}
The least constant $C_{\lambda}$ is denoted by $\mathbf C_{\lambda, a}$.
\end{defi}

 \begin{thm}\label{em1}
 Let $\mathcal{B}$ be a basis of a Banach space. Then $\mathcal{B}$ is almost greedy if and only if it is $\lambda$-almost greedy for all (some) $\lambda \geqslant 1$.
 \end{thm}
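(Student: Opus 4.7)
The plan is to establish both directions separately. For the forward direction, I rely on the standard fact that every almost greedy basis is quasi-greedy; denote its quasi-greedy constant by $\mathbf{C}_\ell$. Fix $\lambda \geqslant 1$ and $m \in \mathbb{N}_0$. For $\epsilon > 0$, choose $A_0$ with $|A_0| = m$ and $\|x - P_{A_0}(x)\| \leqslant \widetilde{\sigma}_m(x) + \epsilon$, and set $y = x - P_{A_0}(x)$, so that $\supp(y) \subset A_0^c$. I would then pick a greedy set $B$ of $y$ of size $\lceil \lambda m \rceil - m$ with $B \cap A_0 = \emptyset$, possible since $A_0^c$ is infinite and, by the definition of $\mathcal{G}(y, \cdot)$, padding with zero-coefficient indices still gives a valid greedy set. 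Since $B \cap A_0 = \emptyset$, one has $x - P_{A_0 \cup B}(x) = y - P_B(y)$, and quasi-greediness applied to $y$ gives $\|y - P_B(y)\| \leqslant \mathbf{C}_\ell \|y\|$. Letting $\epsilon \to 0$ yields $\widetilde{\sigma}_{\lceil \lambda m \rceil}(x) \leqslant \mathbf{C}_\ell \widetilde{\sigma}_m(x)$, and combining with the almost greedy inequality at size $\lceil \lambda m \rceil$ produces $\gamma_{\lceil \lambda m \rceil}(x) \leqslant \mathbf{C}_a \mathbf{C}_\ell \widetilde{\sigma}_m(x)$, establishing $\lambda$-almost greediness.

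For the reverse direction, the case $\lambda = 1$ is immediate. Assume $\mathcal{B}$ is $\lambda$-almost greedy for some $\lambda > 1$ with constant $C_\lambda$. My plan is to invoke the classical characterization that a basis is almost greedy if and only if it is quasi-greedy and democratic, and to verify both properties. For quasi-greediness, selecting $A_0$ of size $m$ disjoint from $\supp(x)$ in the definition of $\widetilde{\sigma}_m(x)$ (possible since $A_0$ is finite and $\mathbb{N}$ is infinite) gives $\widetilde{\sigma}_m(x) \leqslant \|x\|$ and hence $\gamma_{\lceil \lambda m \rceil}(x) \leqslant C_\lambda \|x\|$ for all $m$. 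For sizes $n$ not of the form $\lceil \lambda m \rceil$, I use the estimate $|\gamma_n(x) - \gamma_{n+1}(x)| \leqslant M\|x\|$ where $M = (\sup_i \|e_i\|)(\sup_j \|e_j^*\|_*)$, a direct consequence of M-boundedness and semi-normalization, and note that every integer lies within $\lceil \lambda \rceil$ of some $\lceil \lambda m \rceil$.

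Democracy is the subtle step. Given disjoint $A, B$ with $|A| = |B| = m$, I would apply the $\lambda$-almost greedy inequality to vectors $x = 1_A + \alpha 1_B$ with $\alpha \in (0,1)$ approaching $1$. For $\lambda \in (1,2)$, a direct calculation yields $\gamma_{\lceil \lambda m \rceil}(x) = \alpha \sup_{B' \subset B,\, |B'| = \lceil \lambda m \rceil - m} \|1_{B \setminus B'}\|$ while $\widetilde{\sigma}_m(x) \leqslant \|1_A\|$, producing cross-size inequalities $\|1_E\| \leqslant C_\lambda \|1_A\|$ for all $E \subset B$ with $|E| = 2m - \lceil \lambda m \rceil$. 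For $\lambda \geqslant 2$, since $\lceil \lambda m \rceil \geqslant 2m$ would trivialize the direct choice, I would instead use variants with $|A| = m$ and $|B| = \lceil \lambda m \rceil$ to produce analogous cross-size comparisons. Combining these with quasi-greediness (which bounds subset-indicator norms by supersets up to the constant $1+\mathbf{C}_\ell$) and carefully iterating over parameter choices is intended to recover equal-size democracy. The main obstacle lies precisely here: bridging the gap between cross-size and equal-size comparisons while keeping constants controlled and treating all $\lambda > 1$ in a unified manner.
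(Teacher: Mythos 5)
Your forward direction is correct and is a genuinely different (and more self-contained) route than the paper's: you prove $\widetilde{\sigma}_{\lceil\lambda m\rceil}(x)\leqslant \mathbf C_\ell\,\widetilde{\sigma}_m(x)$ directly by appending a greedy set of $x-P_{A_0}(x)$ to a near-optimal $A_0$, whereas the paper routes through the equivalence of almost greedy with the $\lambda$-greedy condition \eqref{e1} from \cite[Theorem 3.3]{DKKT} and then weakens $\sigma_m$ to $\widetilde{\sigma}_m$. Your argument for quasi-greediness in the reverse direction (the one-step estimate $|\gamma_n(x)-\gamma_{n+1}(x)|\leqslant M\|x\|$ plus the bounded gaps of $\{\lceil\lambda m\rceil\}$) is also sound; it is essentially a hands-on proof of Lemma \ref{l4}.

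The genuine gap is in the democracy step, exactly where you flag it, and it is not merely a bookkeeping issue. Your test vector $x=1_A+\alpha 1_B$ with $|A|=|B|=m$ forces every greedy set of size $\lceil\lambda m\rceil$ to contain all of $A$, so the only information extracted is $\|1_E\|\leqslant C_\lambda\|1_A\|$ for $|E|=2m-\lceil\lambda m\rceil$; this is vacuous as soon as $\lambda\geqslant 2$, and even for $\lambda<2$ the constant of the resulting $\mu$-democracy degenerates as $\lambda\to 2^-$. Your proposed substitute for $\lambda\geqslant 2$ (taking $|B|=\lceil\lambda m\rceil$) only yields $\|1_E\|\leqslant C_\lambda\|1_B\|$ with $|E|=m\leqslant|B|$, i.e.\ a small set dominated by a large one, which quasi-greediness already gives and which is the wrong direction for democracy. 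The missing idea is to arrange that the greedy set lives \emph{entirely inside the large set}: take $A,B$ disjoint with $\lambda|A|\leqslant|B|$, set $y=1_A+1_B$, and note that since all coefficients of $y$ have modulus $1$, any $\Lambda\subset B$ with $|\Lambda|=\lceil\lambda|A|\rceil\leqslant|B|$ lies in $\mathcal{G}(y,\lceil\lambda|A|\rceil)$. Then $B\setminus\Lambda$ is a greedy set of $y-P_\Lambda(y)$, so
$$\|1_A\|\ =\ \|y-P_B(y)\|\ \leqslant\ \mathbf C_\ell\|y-P_\Lambda(y)\|\ \leqslant\ \mathbf C_\ell C_\lambda\widetilde{\sigma}_{|A|}(y)\ \leqslant\ \mathbf C_\ell C_\lambda\|y-P_A(y)\|\ =\ \mathbf C_\ell C_\lambda\|1_B\|,$$
which is $\lambda$-democracy uniformly in $\lambda\geqslant 1$; Lemma \ref{l2} then upgrades this to democracy using the quasi-greediness you have already established. (This is in substance what the paper does via Theorem \ref{m2}(ii), Proposition \ref{ep1}, and Lemmas \ref{l2}--\ref{l3}, which moreover yield the stronger SLC conclusion.) Without this or an equivalent device, your reverse direction does not cover $\lambda\geqslant 2$.
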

 Therefore, enlarging greedy sums in the almost greedy condition still gives us almost greedy bases. Our second result examines the case for the strong partially greedy condition \eqref{e30}.

\begin{defi}\normalfont
Let $\lambda \geqslant 1$. A basis $\mathcal B$ is said to be $\lambda$-partially greedy if there 
exists $C_\lambda\geqslant 1$ such that 
\begin{equation}\label{e31}\gamma_{\lceil\lambda m\rceil}(x)\ \leqslant\ C_{\lambda}\widehat{\sigma}_m(x), \forall x\in X, \forall m\in \mathbb{N}_0.\end{equation}
The least constant $C_{\lambda}$ is denoted by $\mathbf C_{\lambda, p}$.
\end{defi}

We characterize $\lambda$-partially greedy bases and prove that \eqref{e31} is strictly weaker than 
\eqref{e30} when $\lambda > 1$. To do this, we need the notion of $\lambda$-max conservative.

\begin{defi}\normalfont Let $\lambda\geqslant 1$. 
A basis $\mathcal{B}$ is $\lambda$-max conservative if there exists $C > 0$ such that
$$\|1_A\|\ \leqslant\ C\|1_B\|, \forall A, B\in \mathbb{N}^{<\infty}, A < B, (\lambda - 1)\max A + |A|\leqslant |B|.$$
The least such $C$ is denoted by $\Delta_{\lambda}$.
\end{defi}

\begin{thm}\label{m6}
For $\lambda\geqslant 1$, a basis $\mathcal{B}$ is $\lambda$-partially greedy if and only if it is quasi-greedy and $\lambda$-max conservative.
\end{thm}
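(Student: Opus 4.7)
The plan is to prove the two implications separately, with quasi-greediness serving as a bridge. In the forward direction I will first derive quasi-greediness from the $\lambda$-partially greedy property by a simple extension argument, then establish $\lambda$-max conservativeness via two carefully chosen test vectors. In the reverse direction I split $x - P_\Lambda(x)$ against an initial sum $S_n(x)$ and reduce to standard quasi-greedy truncation estimates combined with a single application of $\lambda$-max conservativeness.

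\noindent\textbf{Forward direction.} Quasi-greediness: given a greedy set $\Lambda$ of size $k$, let $m = \lceil k/\lambda \rceil$ and extend $\Lambda$ to a greedy set $\Lambda^*$ of size $\lceil \lambda m \rceil \geqslant k$ by adding finitely many (bounded in $\lambda$) next-largest-coefficient indices; then
\begin{equation*}
\|x - P_\Lambda(x)\| \;\leqslant\; \|x - P_{\Lambda^*}(x)\| + \|P_{\Lambda^* \setminus \Lambda}(x)\| \;\leqslant\; C_\lambda \|x\| + c(\lambda)\sup_n \bigl(\|e_n\| \|e_n^*\|_*\bigr) \|x\|,
\end{equation*}
using $\widehat{\sigma}_m(x)\leqslant \|x\|$ and M-boundedness. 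For $\lambda$-max conservativeness, fix $A < B$ with $(\lambda-1)\max A + |A| \leqslant |B|$; set $N := \max A$, $a := |A|$, $b := |B|$, $k := \lceil \lambda N \rceil$. Since $\lceil \lambda N \rceil = N + \lceil (\lambda - 1) N \rceil$ and the hypothesis is equivalent (as $a, b$ are integers) to $b \geqslant \lceil (\lambda-1) N \rceil + a$, we have $k - b \leqslant N - a$. If $b \geqslant k$, pick $B_0 \subseteq B$ with $|B_0| = k$ and test on $x = 3\cdot 1_{B_0} + 1_A$: the unique greedy set of size $k$ is $B_0$, $S_N(x) = 1_A$, and $\|x - S_N(x)\| = 3\|1_{B_0}\|$, so $\lambda$-partial greediness at $m = N$ gives $\|1_A\| \leqslant 3C_\lambda\|1_{B_0}\| \leqslant 3C_\lambda(1+\mathbf{C}_\ell)\|1_B\|$, the last step applying quasi-greediness to $1_B$ with greedy subset $B_0$. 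Otherwise $b < k$: pick $F \subseteq \{1,\ldots,N\}\setminus A$ with $|F| = k - b$ (feasible since $k - b \leqslant N - a$) and test on $x = 3\cdot 1_B + 2\cdot 1_F + 1_A$: the unique greedy set of size $k = b + |F|$ is $B \cup F$, while $F \cup A \subseteq \{1,\ldots,N\}$ and $B > N$ give $S_N(x) = 2\cdot 1_F + 1_A$ and $\|x - S_N(x)\| = 3\|1_B\|$, yielding $\|1_A\| \leqslant 3C_\lambda\|1_B\|$.

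\noindent\textbf{Reverse direction.} Assume $\mathcal{B}$ is quasi-greedy w.const $\mathbf{C}_\ell$ and $\lambda$-max conservative w.const $\Delta_\lambda$. Fix $x$, $m$, and a greedy set $\Lambda$ of size $\lceil \lambda m\rceil$. For $n \leqslant m$, set $D := \{1,\ldots,n\}\setminus \Lambda$ and $E := \Lambda\setminus\{1,\ldots,n\}$, so
\begin{equation*}
x - P_\Lambda(x) \;=\; (x - S_n(x)) + P_D(x) - P_E(x).
\end{equation*}
The set $E$ is a greedy set of $y := x - S_n(x)$ (any $i > n$ with $i \notin E$ satisfies $i \notin \Lambda$, hence $|e_i^*(x)| \leqslant \min_{j \in E}|e_j^*(x)|$ by greediness of $\Lambda$), so quasi-greediness yields $\|P_E(x)\| = \|P_E(y)\| \leqslant (1+\mathbf{C}_\ell)\|x - S_n(x)\|$. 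For $\|P_D(x)\|$, the coefficients on $D$ are bounded by $\alpha := \min_{j \in E}|e_j^*(x)|$, and standard quasi-greedy truncation estimates (above on $D$, below on $E$) together with the sign-change equivalence $\|1_{\varepsilon A}\| \asymp \|1_A\|$ for quasi-greedy bases produce constants $K_1, K_2$ depending only on $\mathbf{C}_\ell$ with $\|P_D(x)\| \leqslant K_1\alpha\|1_D\|$ and $\alpha\|1_E\| \leqslant K_2\|P_E(x)\|$. Now $\lambda$-max conservativeness applies to $(D,E)$: $D < E$ and
\begin{equation*}
(\lambda - 1)\max D + |D| \;\leqslant\; (\lambda - 1) n + |D| \;\leqslant\; (\lceil \lambda m\rceil - n) + |D| \;=\; |E|,
\end{equation*}
where the middle inequality uses $n \leqslant m$. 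Thus $\|1_D\| \leqslant \Delta_\lambda\|1_E\|$, yielding $\|P_D(x)\| \leqslant K_1 K_2\Delta_\lambda(1+\mathbf{C}_\ell)\|x - S_n(x)\|$. Summing the three terms, then taking the infimum over $n \leqslant m$ and supremum over $\Lambda$, gives $\gamma_{\lceil \lambda m\rceil}(x) \leqslant C\widehat{\sigma}_m(x)$.

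The main obstacle is the two-case analysis for $\lambda$-max conservativeness in the forward direction: because $(\lambda - 1)\max A + |A| \leqslant |B|$ is strictly weaker than $|B| \geqslant \lceil \lambda \max A\rceil$, a test vector supported on $B \cup A$ alone cannot force the greedy set of size $\lceil \lambda \max A\rceil$ to coincide with $B$ when $|B|$ only just exceeds the threshold. The padding set $F \subseteq \{1,\ldots,\max A\}\setminus A$ with intermediate coefficient $2$ is exactly what aligns $|B| + |F|$ with $\lceil \lambda \max A\rceil$, so that the greedy set is forced to be precisely $B \cup F$ (leaving $1_A$ as the residue) while $S_{\max A}(x)$ simultaneously absorbs $F$ and leaves only $3\cdot 1_B$.
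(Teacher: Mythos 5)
Your proof is correct, but it takes a genuinely different route from the paper's. The paper factors the theorem through an intermediate property: it first shows that $\lambda$-partially greedy is equivalent to quasi-greedy plus $\lambda$-PSLC (Theorem \ref{m4} together with Corollary \ref{el1}, with quasi-greediness extracted via Oikhberg's crude-basis lemma), and then shows that under quasi-greediness $\lambda$-PSLC and $\lambda$-max conservative coincide (Lemma \ref{l40}). You bypass the PSLC layer entirely. In the forward direction you obtain quasi-greediness by the elementary device of padding a greedy set of size $k$ up to size $\lceil\lambda\lceil k/\lambda\rceil\rceil$ (at most $\lambda+1$ extra terms, controlled by M-boundedness), rather than citing \cite[Proposition 4.1]{Oi}; and you extract $\lambda$-max conservativeness from two explicit test vectors, whereas the paper uses the single vector $1_D+1_{\varepsilon A}+x+1_{\delta B}$ with $D=\{1,\dots,\max A\}\setminus A$ to get the stronger $\lambda$-PSLC in one stroke (max conservativeness then being the case $x=0$). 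Your two-case analysis, with the padding set $F$ carrying the intermediate coefficient $2$, is a correct workaround for the fact that $(\lambda-1)\max A+|A|\leqslant|B|$ does not force $|B|\geqslant\lceil\lambda\max A\rceil$; the arithmetic $\lceil\lambda N\rceil-|B|\leqslant N-|A|$ that makes $F$ fit inside $\{1,\dots,N\}\setminus A$ is exactly right. In the reverse direction your decomposition $x-P_\Lambda(x)=(x-S_n(x))+P_D(x)-P_E(x)$ with $D=\{1,\dots,n\}\setminus\Lambda$, $E=\Lambda\setminus\{1,\dots,n\}$ is essentially the same splitting the paper uses inside Theorem \ref{m4}(iii), and your verification that $(\lambda-1)\max D+|D|\leqslant|E|$ is the same counting argument; the difference is that the paper packages the upper and lower coefficient comparisons into a single application of the truncation operator $T_\alpha$ (Theorem \ref{bto}), while you invoke the two standard quasi-greedy estimates separately (the convexity bound $\|P_D(x)\|\lesssim\alpha\|1_D\|$ and the lower truncation bound $\alpha\|1_E\|\lesssim\|P_E(x)\|$, both classical, e.g.\ from \cite{DKKT}), which costs you slightly worse constants but nothing in validity. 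What your approach buys is self-containedness (no appeal to PSLC or to the crude-basis result); what the paper's buys is the stronger intermediate equivalence with $\lambda$-PSLC and cleaner constants.
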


\begin{thm}\label{m7} 
Let $\lambda > 1$. The following hold
\begin{enumerate}
\item[i)] If $\mathcal{B}$ is strong partially greedy, then $\mathcal{B}$ is $\lambda$-partially greedy.
\item[ii)] There exists an unconditional basis $\mathcal{B}$ that is $\lambda$-partially greedy but is not strong partially greedy. 
\end{enumerate}
\end{thm}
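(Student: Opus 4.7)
My plan is to deduce part (i) from a standard splitting of the $\lceil\lambda m\rceil$-term greedy set combined with quasi-greediness, and to prove part (ii) by exhibiting a concrete unconditional basis that is $\lambda$-max conservative but not $1$-max conservative, so that Theorem \ref{m6} produces the desired separation.

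For part (i), suppose $\mathcal{B}$ is strong partially greedy with constant $\mathbf{C}_p$. Since $\widehat{\sigma}_0(x)=\|x\|$, specialising \eqref{e30} at $m=0$ already gives $\mathbf{C}_\ell\le\mathbf{C}_p$. Fix $x\in X$, $m\in\mathbb{N}_0$, and $\Lambda\in\mathcal{G}(x,\lceil\lambda m\rceil)$. Because $\lceil\lambda m\rceil\ge m$, choose $\Lambda'\subseteq\Lambda$ of size $m$ consisting of the indices on which $|e_n^*(x)|$ is largest; these coefficients dominate those in $\Lambda\setminus\Lambda'$ and, since $\Lambda$ is greedy, those outside $\Lambda$, so $\Lambda'\in\mathcal{G}(x,m)$. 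Setting $y:=x-P_{\Lambda'}(x)$ we have $e_n^*(y)=0$ on $\Lambda'$ and $e_n^*(y)=e_n^*(x)$ off $\Lambda'$, from which a direct check shows $\Lambda\setminus\Lambda'\in\mathcal{G}(y,\lceil\lambda m\rceil-m)$. Applying strong partial greediness to $\Lambda'$ and quasi-greediness to $y$,
\[
\|x-P_\Lambda(x)\|=\|y-P_{\Lambda\setminus\Lambda'}(y)\|\le\mathbf{C}_\ell\|y\|\le\mathbf{C}_\ell\,\mathbf{C}_p\,\widehat{\sigma}_m(x).
\]
Taking the supremum over $\Lambda$ yields $\mathbf{C}_{\lambda,p}\le\mathbf{C}_p^{\,2}$.

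For part (ii), put $B_k:=\{k(k-1)/2+1,\dots,k(k+1)/2\}$, so $|B_k|=k$ and $\mathbb{N}=\bigsqcup_{k\ge 1}B_k$, and let $X$ be the completion of $c_{00}$ under
\[
\|x\|:=\max\Bigl\{\sup_{k\ge 1}\|P_{B_k}(x)\|_1,\ \|x\|_2\Bigr\}.
\]
The coordinate basis $\mathcal{B}=(e_n)$ is $1$-unconditional and semi-normalized (both defining norms are $1$-unconditional lattice norms and $\|e_n\|=1$), and semi-normalization together with $x\in X\Rightarrow x\in\ell_2$ shows $e_n^*(x)\to 0$; thus $\mathcal{B}$ is a valid basis in the sense of the paper and is in particular quasi-greedy. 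By Theorem \ref{m6} it suffices to verify that $\mathcal{B}$ is $\lambda$-max conservative but not $1$-max conservative. For the failure of $1$-max conservativeness, set $A_k:=B_k$ and let $B_k'$ consist of one element drawn from each of $B_{k+1},\dots,B_{2k}$; then $A_k<B_k'$, $|A_k|=|B_k'|=k$, and
\[
\|1_{A_k}\|=\max(k,\sqrt{k})=k,\qquad \|1_{B_k'}\|=\max(1,\sqrt{k})=\sqrt{k},
\]
so the ratio $\sqrt{k}$ is unbounded. For $\lambda$-max conservativeness with $\lambda>1$, fix $A<B$ with $(\lambda-1)\max A+|A|\le|B|$ and put $M:=\max A$. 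If $k_0$ denotes the block index of $M$, then $k_0(k_0-1)/2<M$ gives $k_0\le 2\sqrt{M}$; since every block met by $A$ has index at most $k_0$, we have $\sup_k|A\cap B_k|\le k_0\le 2\sqrt{M}$, and combined with $\sqrt{|A|}\le\sqrt{M}$ this yields $\|1_A\|\le 2\sqrt{M}$. Meanwhile $\|1_B\|\ge\|1_B\|_2=\sqrt{|B|}\ge\sqrt{(\lambda-1)M}$, so
\[
\frac{\|1_A\|}{\|1_B\|}\le\frac{2\sqrt{M}}{\sqrt{(\lambda-1)M}}=\frac{2}{\sqrt{\lambda-1}}.
\]
Theorem \ref{m6} then identifies $\mathcal{B}$ as $\lambda$-partially greedy while excluding strong partial greediness.

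The chief obstacle is finding a single norm in which $\|1_A\|$ can be forced large only through block-concentration of $A$ (breaking $1$-max conservativeness via the pairs $(B_k,B_k')$ with ratio $\sqrt{k}$), while the $\ell_2$ summand imposes a lower bound $\|1_B\|\ge\sqrt{|B|}$ uniform in the position of $B$; matched against the geometric constraint $\sup_k|A\cap B_k|\lesssim\sqrt{\max A}$ dictated by the blocks $B_k$ of length $k$, this is exactly what the $\lambda$-max conservative inequality demands for $\lambda>1$, yet leaves the $\lambda=1$ case unreachable.
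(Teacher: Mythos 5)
Your part (i) is the paper's argument with the quasi-greedy step spelled out: the paper likewise extracts $\mathbf C_\ell\le\mathbf C_p$ from the $m=0$ case and writes $\gamma_{\lceil\lambda m\rceil}(x)\le C\gamma_m(x)\le C^2\widehat{\sigma}_m(x)$, so you land on the same constant $\mathbf C_p^2$. Part (ii) is correct but uses a genuinely different example. The paper builds a weighted Schreier-type space in which the norming functionals carry weight $1/\sqrt{n}$ when the norming set lies inside $D=\{2^n\}$ and $1/n$ otherwise; there $\|1_A\|\sim\sqrt{|A|}$ for lacunary $A$ versus $\|1_B\|\sim\ln|B|$ generically, and the $\lambda$-max conservative bound comes from the fact that $|B|\ge(\lambda-1)\max A$ forces $\ln|B|\gtrsim j\gtrsim\|1_F\|^2$ for lacunary $F=\{2^{n_1},\dots,2^{n_j}\}\subset A$. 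Your block norm $\max\{\sup_k\|P_{B_k}x\|_1,\|x\|_2\}$ exploits the same mechanism --- $\|1_A\|$ can only be large through position-dependent concentration of $A$, while the lower envelope of $\|1_B\|$ is position-independent and grows with $|B|$ --- but the computations are cleaner: the failure of conservativeness gives a ratio $\sqrt{k}$ directly, and the $\lambda$-max conservative constant $2/\sqrt{\lambda-1}$ falls out of $\sup_k|A\cap B_k|\le k_0\le 2\sqrt{\max A}$ against $\|1_B\|\ge\sqrt{(\lambda-1)\max A}$, with no case analysis on the sign of a logarithmic correction. Both routes then finish identically via Theorem \ref{m6} and the identification of $1$-max conservative with conservative. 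One small point worth making explicit in your write-up: the verification that your $\mathcal{B}$ satisfies condition (4) of the paper's definition of a basis follows from its being a $1$-unconditional (hence Schauder) basis of the completion of $c_{00}$, which you assert but do not prove; this is standard and the paper is equally terse about it for its own example.
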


The problem of studying when different greedy-type constants are equal to $1$ has received much attention recently (see \cite{AA2, AA, AABW, AW, BBL, DK}.)  Our next result give examples of when almost greedy and strong partially greedy bases fail to have the corresponding greedy-type constant $1$, but enlarging greedy sums by a factor $\lambda$ can achieve that. 

\begin{thm}\label{em2}
There exists an unconditional basis that is not almost greedy w.const $1$ but is $\lambda$-almost greedy w.const $1$ for some $\lambda > 1$.
\end{thm}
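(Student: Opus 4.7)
My plan is to exhibit a concrete $1$-unconditional basis in $\ell_1$ whose basis vectors take only two distinct norms. Take $X=\ell_1$ with canonical basis $(e_n)$ and define
$$f_n\ :=\ \begin{cases} e_n, & n\text{ odd},\\ 2e_n, & n\text{ even}.\end{cases}$$
Since $(f_n)$ is a positive rescaling of $(e_n)$, the system is automatically $1$-unconditional and semi-normalized with $\|f_n\|\in\{1,2\}$; M-boundedness follows because the biorthogonal functional of $f_n$ is $(1/\|f_n\|)e_n^*$, giving $\|f_n\|\|f_n^*\|_*=1$. The key identity is $\|x\|=\sum_n|c_n|\,\|f_n\|$ for $x=\sum_n c_n f_n\in X$, so $\|x-P_B(x)\|=\sum_{n\notin B}|c_n|\,\|f_n\|$ for every $B\subset\mathbb N$.

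To see that $(f_n)$ is not almost greedy with constant $1$, I would take $x=f_1+\tfrac{3}{4}f_2$: the unique greedy set of size one is $\{1\}$, giving $\gamma_1(x)=\|\tfrac{3}{4}f_2\|=\tfrac{3}{2}$, whereas $A=\{2\}$ yields $\|x-P_A(x)\|=\|f_1\|=1$, so $\widetilde\sigma_1(x)\leqslant 1<\gamma_1(x)$.

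For the positive direction I would prove $\mathbf C_{2,a}=1$. Fix $x\in X$, $m\in\mathbb N_0$, a greedy set $\Lambda$ of size $\lceil 2m\rceil=2m$, and any $A\subset\mathbb N$ with $|A|=m$. Set $E=A\setminus\Lambda$ and $D=\Lambda\setminus A$, so $|D|-|E|=m$ and, since $|E|\leqslant m$, one has $|D|\geqslant 2|E|$. Because $\Lambda$ is greedy, $\max_{n\in E}|c_n|\leqslant\min_{n\in D}|c_n|$. After cancelling the common tail $\sum_{n\notin\Lambda\cup A}|c_n|\,\|f_n\|$, the inequality $\|x-P_\Lambda(x)\|\leqslant\|x-P_A(x)\|$ reduces to $\sum_{n\in E}|c_n|\,\|f_n\|\leqslant\sum_{n\in D}|c_n|\,\|f_n\|$. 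Using $\|f_n\|\leqslant 2$ on $E$ and $\|f_n\|\geqslant 1$ on $D$, this follows from the chain
$$\sum_{n\in E}|c_n|\,\|f_n\|\ \leqslant\ 2|E|\max_{n\in E}|c_n|\ \leqslant\ 2|E|\min_{n\in D}|c_n|\ \leqslant\ |D|\min_{n\in D}|c_n|\ \leqslant\ \sum_{n\in D}|c_n|\,\|f_n\|.$$
Taking the supremum over greedy $\Lambda$ of size $2m$ and the infimum over $A$ of size $m$ yields $\gamma_{2m}(x)\leqslant\widetilde\sigma_m(x)$, hence $\mathbf C_{2,a}=1$.

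I anticipate no serious obstacle; the only delicate point is handling ties in the sequence $(|c_n|)$, where several greedy sets $\Lambda$ of size $2m$ may coexist. This is handled automatically because the estimate above uses only the defining inequality $\min_{n\in\Lambda}|c_n|\geqslant\max_{n\notin\Lambda}|c_n|$ and the cardinality identity $|D|=|E|+m$, neither of which depends on a specific choice of $\Lambda$.
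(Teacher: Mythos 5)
Your proposal is correct and is essentially the paper's own construction: the paper renorms $\ell_1$ by $\|x\|=\sum_n w(n)|x_n|$ for a bounded weight sequence and takes $\lambda=\sup_n w(n)/\inf_n w(n)$, of which your two-valued weights $\|f_n\|\in\{1,2\}$ with $\lambda=2$ are a special case, and your cancellation-plus-cardinality estimate is the same mechanism the paper uses (the paper in fact gets the slightly stronger bound $\gamma_{\lceil\lambda m\rceil}(x)\leqslant\sigma_m(x)$, and deduces failure of the constant-$1$ property via the SLC characterization rather than by a direct example). No gaps.
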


\begin{thm}\label{em3}
There exists an unconditional basis that is not strong partially greedy w.const $1$ but is $\lambda$-partially greedy w.const $1$ for some $\lambda > 1$. 
\end{thm}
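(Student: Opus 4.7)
The plan is to construct a weighted $\ell_1$ basis that fails strong partial greediness at constant $1$ only because of a single heavy coordinate, yet whose failure is absorbed once the greedy sum is enlarged by any factor $\lambda > 1$. Concretely, I would take $X := \ell_1$ equipped with the equivalent norm
$$\|x\|\ :=\ \sum_{n=1}^\infty w_n|e_n^*(x)|,\qquad w_1 := 2,\quad w_n := 1\text{ for }n \geq 2,$$
and let $(e_n)_{n=1}^\infty$ be the canonical unit-vector basis. Because the norm depends only on the moduli of the coefficients, $(e_n)$ is $1$-unconditional, and in particular suppression quasi-greedy with constant $1$.

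To witness the failure of strong partial greediness at constant $1$, I would test $x := e_1 + e_2$ with $m := 1$. Since $|e_1^*(x)| = |e_2^*(x)| = 1$, the singleton $\{2\}$ is a valid greedy set of $x$ of size $1$, which gives $\|x - P_{\{2\}}(x)\| = \|e_1\| = 2$ and hence $\gamma_1(x) \geq 2$; meanwhile $\widehat{\sigma}_1(x) \leq \|x - S_1(x)\| = \|e_2\| = 1$, so $\mathbf C_p > 1$.

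For the positive half, I would fix any $\lambda > 1$ and prove the pointwise estimate $\|x - P_\Lambda(x)\| \leq \|x - S_n(x)\|$ for every $x \in X_c$, every $m \in \mathbb N_0$, every greedy set $\Lambda$ of $x$ with $|\Lambda| = \lceil\lambda m\rceil$, and every integer $0 \leq n \leq m$; taking the infimum over $n$ and the supremum over $\Lambda$ then gives $\mathbf C_{\lambda,p} = 1$. The case $m = 0$ is trivial, so assume $m \geq 1$ and write $k := |\{i \leq n : i \notin \Lambda\}|$, $\alpha := \max_{i \notin \Lambda}|e_i^*(x)|$ and $\beta := \min_{j \in \Lambda}|e_j^*(x)|$, so that $\beta \geq \alpha$ by the greedy property. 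The difference of the two norms equals
$$\sum_{i \leq n,\, i \notin \Lambda} w_i|e_i^*(x)|\ -\ \sum_{i > n,\, i \in \Lambda} w_i|e_i^*(x)|.$$
The first sum is bounded above by $(k+1)\alpha$, since at most one of its $k$ indices is the heavy index $1$. When $n \geq 1$ every index in the second sum exceeds $1$ and thus carries weight $1$, so the second sum equals $\sum_{i > n,\, i \in \Lambda}|e_i^*(x)|$, which has exactly $\lceil\lambda m\rceil - n + k$ terms of modulus at least $\beta \geq \alpha$, hence is at least $(\lceil\lambda m\rceil - n + k)\alpha$; when $n = 0$ the first sum is vacuous and the inequality is automatic. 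The remaining inequality $(k+1)\alpha \leq (\lceil\lambda m\rceil - n + k)\alpha$ reduces to $n + 1 \leq \lceil\lambda m\rceil$, which holds since $\lceil\lambda m\rceil \geq \lambda m > m \geq n$ when $m \geq 1$ and $\lambda > 1$. The only delicate step is the book-keeping around the single heavy coordinate: it is precisely what forces $\mathbf C_p > 1$, but it costs only one extra unit of weight, a deficit comfortably paid for by the gain $\lceil\lambda m\rceil - m \geq 1$ in the size of the greedy sum.
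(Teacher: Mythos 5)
Your proposal is correct, but it takes a genuinely different route from the paper. The paper also builds its example by perturbing finitely many weights, but it does so on a Schreier-type space: $\|x\|_\omega=\sup_{F\in\mathcal F}\sum_{n\in F}w(n)|x_n|$ with $\mathcal F=\{F:\sqrt{\min F}\geqslant |F|\}$ and at most $N$ weights below $1$; the failure of constant $1$ is detected via the characterization of strong partially greedy w.const $1$ as PSLC w.const $1$, and the positive half rests on a combinatorial exchange argument (replacing the indices of $B$ appearing in an admissible set $F\in\mathcal F$ by weight-$1$ indices inside the greedy set), yielding $\gamma_{(N+1)m}(x)\leqslant\widehat\sigma_m(x)$, i.e.\ constant $1$ only for the specific factor $\lambda=N+1$. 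Your weighted $\ell_1$ example (one heavy coordinate $w_1=2$) is closer in spirit to the paper's own proof of the almost greedy analogue (Theorem \ref{em2}); it is more elementary, since the norm is additive in the coefficients and the whole verification reduces to the single counting inequality $n+1\leqslant\lceil\lambda m\rceil$. It also gives a formally stronger conclusion: $\lambda$-partially greedy w.const $1$ for \emph{every} $\lambda>1$, not merely for some $\lambda$. The bookkeeping is sound: the first sum has $k$ terms of modulus at most $\alpha$ with at most one carrying weight $2$, so the excess over $k\alpha$ is exactly one unit of $\alpha$, which is paid for by $\lceil\lambda m\rceil\geqslant m+1\geqslant n+1$; the degenerate cases $m=0$, $n=0$, and $\alpha=0$ are all handled. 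One could only ask you to note explicitly that the choice $w_1=2$ (rather than a larger heavy weight) is what keeps the deficit at a single unit of $\alpha$ so that the gain $\lceil\lambda m\rceil-m\geqslant 1$ suffices uniformly in $m$ — a point your closing sentence already makes informally.
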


Besides the above main results, we also investigate the problem of enlarging greedy sums in some equivalent formulation of the strong partially greedy property and of the so-called reverse partially greedy property, a probably less prominent greedy-type property.

\subsection{Review of some classical results}
For later reference, we give a quick overview of existing results that characterize different greedy-type bases.
The following is a classical result that characterizes greedy bases.
\begin{thm}[Konyagin and Temlyakov \cite{KT1}]\label{KT1} A basis is greedy if and only if it is unconditional and democratic.
\end{thm}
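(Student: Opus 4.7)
The plan is to prove the two directions separately. The main obstacle will be in the forward direction ($\Rightarrow$): extracting unconditionality from the greedy inequality is delicate because the greedy condition only controls $P_\Lambda(x)$ when $\Lambda$ is itself a greedy set of $x$, whereas unconditionality demands a uniform bound for every subset. Democracy, on the other hand, will fall out of a simple test-vector computation, and the converse direction ($\Leftarrow$) is essentially a decomposition argument.

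For ($\Leftarrow$), suppose $\mathcal{B}$ is unconditional with constant $\mathbf K_u$ and democratic with constant $\mathbf C_d$. Given $x$, $m$, a greedy set $\Lambda \in \mathcal{G}(x,m)$, a comparison set $B$ with $|B|=m$, and scalars $(b_n)_{n\in B}$, the plan is to set $y = x - \sum_{n\in B} b_n e_n$ and decompose
$$x - P_\Lambda(x) \ =\ P_{(\Lambda\cup B)^c}(y) + P_{B\setminus\Lambda}(x),$$
using that $x$ and $y$ agree off $B$. Suppression unconditionality bounds the first summand by $\mathbf K_s \|y\|$. For the second, greediness of $\Lambda$ yields $\max_{n\in B\setminus\Lambda}|e_n^*(x)| \leqslant \alpha \leqslant \min_{k\in\Lambda\setminus B}|e_k^*(y)|$ (with $\alpha = \min_{k\in\Lambda}|e_k^*(x)|$), so unconditionality converts $\|P_{B\setminus\Lambda}(x)\|$ into a constant multiple of $\alpha\|1_{B\setminus\Lambda}\|$; democracy then swaps this for $\alpha\|1_{\Lambda\setminus B}\|$ (legitimate since $|B\setminus\Lambda|=|\Lambda\setminus B|$), and a second use of unconditionality bounds the latter by $\|P_{\Lambda\setminus B}(y)\| \leqslant \mathbf K_s\|y\|$. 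Taking the infimum over $B$ and $(b_n)$ then gives $\gamma_m(x) \leqslant C\sigma_m(x)$ for $C$ depending only on $\mathbf K_u, \mathbf C_d$.

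For ($\Rightarrow$), suppose $\mathcal{B}$ is greedy w.const $\mathbf C_g$. For democracy, the plan is to test with $x = (1+\epsilon)1_A + 1_B$ for disjoint $A, B$ of equal size $m$ and small $\epsilon > 0$: then $A$ is the unique greedy set of size $m$, so $\|1_B\| = \gamma_m(x) \leqslant \mathbf C_g\sigma_m(x) \leqslant \mathbf C_g(1+\epsilon)\|1_A\|$; letting $\epsilon\to 0$ and symmetrizing handles disjoint pairs, while a disjoint third-set comparison handles the general case. For unconditionality, fix $x\in X_c$ with $F = \supp(x)$ and $\Lambda\subset F$, and form
$$z \ =\ x + \alpha\cdot 1_{F\setminus\Lambda},\qquad \alpha \ =\ 2\|x\|_\infty.$$
The key observation is that this inflation makes $F\setminus\Lambda$ a greedy set of $z$ of size $|F\setminus\Lambda|$, since every coordinate of $z$ on $F\setminus\Lambda$ has modulus at least $\alpha - \|x\|_\infty = \|x\|_\infty$, dominating every other coordinate of $z$. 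Meanwhile, $\alpha 1_{F\setminus\Lambda}$ serves as a $|F\setminus\Lambda|$-term approximant of $z$ with residual norm $\|z - \alpha 1_{F\setminus\Lambda}\| = \|x\|$. The greedy inequality then gives
$$\|P_\Lambda(x)\| \ =\ \|z - P_{F\setminus\Lambda}(z)\| \ \leqslant\ \gamma_{|F\setminus\Lambda|}(z) \ \leqslant\ \mathbf C_g\,\sigma_{|F\setminus\Lambda|}(z) \ \leqslant\ \mathbf C_g\|x\|,$$
which extends from $X_c$ to $X$ by density, yielding $\mathbf K_s\leqslant \mathbf C_g$. The elegance of this perturbation step, which converts an arbitrary projection into the residue of a greedy sum, is precisely what resolves the main difficulty identified at the outset.
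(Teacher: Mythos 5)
Your proof is correct; both directions are the standard Konyagin--Temlyakov argument (the decomposition $x-P_\Lambda(x)=P_{(\Lambda\cup B)^c}(y)+P_{B\setminus\Lambda}(x)$ for sufficiency, and the test vectors $(1+\epsilon)1_A+1_B$ and $z=x+2\|x\|_\infty 1_{F\setminus\Lambda}$ for necessity), and it even recovers the quantitative bounds $\mathbf K_s\leqslant \mathbf C_g$ and $\mathbf K_s+\mathbf C_d\mathbf K_u^2$ that the paper records from \cite{DKOSZ}. The paper itself gives no proof, citing \cite{KT1}, so there is nothing further to compare; the only point you wave at is the extension of $\|P_A(x)\|\leqslant \mathbf C_g\|x\|$ from finitely supported $x$ and finite $A$ to general $x$ and infinite $A$, which is routine but worth a line.
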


As observed by Dilworth et al. in \cite[(1)]{DKOSZ}, if $\mathcal{B}$ is unconditional w.const $\mathbf K_u$, suppression unconditional w.const $\mathbf K_s$, and democratic w.const $\mathbf C_d$, then $\mathcal{B}$ is greedy w.const $\mathbf K_s + \mathbf C_d\mathbf K_u^2$. The seemingly current best estimate is $\mathbf K_s + \mathbf C_d\mathbf K_u\min\{\kappa, \mathbf K_u\}$ (see \cite[Theorem 7.2]{AABW}), where $\kappa = \begin{cases}2&\mbox{ if }\mathbb{K} = \mathbb{R},\\ 4 &\mbox{ if }\mathbb{K} = \mathbb{C}\end{cases}$. Hence, even when $\mathbf K_u = \mathbf C_d = 1$, it is not necessary that we have a greedy basis w.const $1$. Indeed, \cite[Theorem E]{DOSZ} gives an unconditional and democratic basis with $\mathbf K_u = \mathbf C_d = 1$, but the greedy constant $\mathbf C_g$ is $2$. To characterize $1$-greedy bases, Albiac and Wojtaszczyk \cite{AW} introduced the so-called symmetry for largest coefficients, which was later generalized by Dilworth et al. \cite{DKOSZ}. 

\begin{defi}\label{AWdefi}\normalfont
A basis $\mathcal B$ is said to be symmetric for largest coefficients (SLC) w.const $C$ if 
\begin{equation}\label{e10}\|x+1_{\varepsilon A}\|\ \leqslant\ C\|x+1_{\delta B}\|,\end{equation}
for all $x\in X$ with $\|x\|_\infty\leqslant 1$, for all $A, B\in\mathbb{N}^{<\infty}$ with $|A|= |B|$ and $A\sqcup B\sqcup x$, and for all signs $\varepsilon, \delta$. The least constant $C$ is denoted by $\mathbf C_b$.
\end{defi}

\begin{thm}[Albiac and Wojtaszczyk \cite{AW}]\label{34AW}
A basis $\mathcal B$ is greedy w.const $1$ if and only if $\mathcal{B}$ is suppression unconditional w.const $1$ and SLC w.const $1$. 
\end{thm}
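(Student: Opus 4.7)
I would prove the equivalence in two directions.

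\emph{Necessity.} Assuming greedy w.const $1$, I verify each property by a well-chosen stress test of $\gamma_m\leq\sigma_m$. For suppression unconditional w.const $1$, given a finite set $A$ and $x\in X_c$, set $y := P_{A^c}(x) + M\cdot 1_A$ with $M > \|x\|_\infty$ so that $A$ is a greedy set of $y$; then
\[ \|P_{A^c}(x)\| = \|y - P_A(y)\| \leq \sigma_{|A|}(y) \leq \|y - (M\cdot 1_A - P_A(x))\| = \|x\|, \]
and a swap $A\leftrightarrow \supp(x)\setminus A$ together with a density argument gives $\|P_A(x)\|\leq\|x\|$ for every $x\in X$. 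For SLC w.const $1$, given $x$ with $\|x\|_\infty\leq 1$, disjoint finite $A,B$ with $|A|=|B|$, and signs $\varepsilon,\delta$, set $z := x + 1_{\varepsilon A} + 1_{\delta B}$. Since $\|x\|_\infty\leq 1$, both $A$ and $B$ are greedy sets of $z$ of size $|A|$, so applying the greedy inequality at $A$ yields $\|x + 1_{\delta B}\| \leq \|x + 1_{\varepsilon A}\|$ and applying it at $B$ yields the reverse, giving equality.

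\emph{Sufficiency.} Assume $\mathbf K_s = \mathbf C_b = 1$, and fix $x$, a greedy set $\Lambda$ of size $m$, and $z$ with $|\supp z|\leq m$; the goal is $\|x - P_\Lambda(x)\|\leq\|x-z\|$. A small-perturbation argument (adding tiny coefficients on a fresh set of size $m - |\supp z|$ and letting them vanish) reduces to $|\supp z| = m$. Set $B := \supp z$, $A := \Lambda\setminus B$, $A' := B\setminus\Lambda$ (so $|A|=|A'|$), $y_0 := P_{(\Lambda\cup B)^c}(x)$, and rescale so that $t := \max_{n\notin\Lambda}|e_n^*(x)| = 1$; let $\delta_n := \sgn(e_n^*(x))$ on $A$. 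The strategy is the three-step chain
\[ \|x - P_\Lambda(x)\| \;\leq\; \|y_0 + 1_{\delta A}\| \;\leq\; \|y_0 + P_A(x)\| \;\leq\; \|x - z\|. \]
The third inequality is immediate from $\mathbf K_s = 1$, since $P_{A\cup(\Lambda\cup B)^c}(x-z) = y_0 + P_A(x)$. The first is a weak form of SLC derived from $\mathbf C_b = 1$ by averaging: writing each $a_n := e_n^*(x)$ on $A'$ (note $|a_n|\leq 1$) as an integral of unit-modulus scalars $a_n = \int\eta\,d\mu_n(\eta)$ and pulling the norm inside gives
\[ \|y_0 + P_{A'}(x)\| \leq \int \|y_0 + 1_{\eta A'}\|\,d\mu(\eta) = \|y_0 + 1_{\delta A}\|, \]
where the equality uses $\mathbf C_b = 1$ (applicable since $\|y_0\|_\infty\leq 1$ and $A\sqcup A'\sqcup y_0$ with $|A|=|A'|$).

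The middle inequality is the main obstacle: it says enlarging unit-modulus coefficients on $A$ to coefficients of modulus $\geq 1$ of the same sign cannot decrease the norm. I would prove it one coordinate at a time. Fix $n_0\in A$ and let $w$ be the partial sum obtained from the remaining coordinates (together with $y_0$); then $w$ has no $e_{n_0}$-component, so $\mathbf K_s = 1$ gives $\|w\|\leq\|w + se_{n_0}\|$ for every scalar $s$. Setting $\beta := |e_{n_0}^*(x)|\geq 1$, the convex-combination identity
\[ w + \delta_{n_0} e_{n_0} \;=\; \beta^{-1}(w + \beta\delta_{n_0} e_{n_0}) + (1 - \beta^{-1})\,w \]
together with the triangle inequality and the preceding bound yields $\|w + \delta_{n_0} e_{n_0}\|\leq\|w + \beta\delta_{n_0} e_{n_0}\|$. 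Iterating this one-coordinate comparison across the elements of $A$ upgrades $1_{\delta A}$ to $P_A(x)$ without decreasing the norm, completing the middle step and, via the chain, the theorem.
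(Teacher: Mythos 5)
Your proof is correct. The paper itself gives no proof of this statement (it is quoted from Albiac--Wojtaszczyk \cite{AW}), but your argument is essentially the standard one and coincides with the paper's proof of the $\lambda$-analogue (Theorem \ref{m1} with $\lambda=1$, via Lemma \ref{l1} and Proposition \ref{pKsKu}): your convexity/averaging step is exactly how Lemma \ref{l1} is established, and your one-coordinate convex-combination trick supplies the proof of Proposition \ref{pKsKu}, which the paper leaves blank. The only cosmetic slip is that the displayed ``$=\|y_0+1_{\delta A}\|$'' in your averaging step should be ``$\leqslant$'', since each integrand is bounded by that quantity via $\mathbf C_b=1$ and $\mu$ is a probability measure.
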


\begin{thm}\label{Cgreedy}\cite[Theorem 2]{DKOSZ}
Let $\mathcal{B}$ be a basis in a Banach space $X$. 
\begin{enumerate}
    \item[i)] If $\mathcal B$ is greedy w.const $\mathbf C_g$, then it is suppression unconditional w.const $\mathbf C_g$ and is SLC w.const $\mathbf C_g$.
    \item[ii)] Conversely, if $\mathcal B$ is suppression unconditional w.const $\mathbf K_s$ and SLC w.const $\mathbf C_b$, then it is greedy w.const $\mathbf{K}_s\mathbf C_b$. 
\end{enumerate}
\end{thm}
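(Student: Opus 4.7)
Both directions follow standard constructions in the greedy basis literature (cf.\ \cite{AW, DKOSZ}).

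For Part (i), assume $\mathcal{B}$ is greedy w.const $\mathbf C_g$. To obtain suppression unconditionality, given $x \in X$ and a finite set $A$, the plan is to amplify the coordinates of $x$ on $A$ by setting $y = x + \alpha \sum_{n \in A}\sgn(e_n^*(x))\,e_n$ for any $\alpha > \|x\|_\infty$. Each coefficient of $y$ indexed in $A$ then has modulus at least $\alpha$, while coefficients outside $A$ retain their original modulus of at most $\|x\|_\infty < \alpha$; hence $A$ is a greedy set of $y$ of size $|A|$. Applying \eqref{e2} and using the $|A|$-term approximation of $y$ by $\alpha \sum_{n \in A}\sgn(e_n^*(x))e_n$ (whose residue is $x$) gives
\[
\|P_{A^c}(x)\|\ =\ \|y - P_A(y)\|\ \leqslant\ \gamma_{|A|}(y)\ \leqslant\ \mathbf C_g\,\sigma_{|A|}(y)\ \leqslant\ \mathbf C_g\|x\|;
\]
applying the same inequality with $A$ replaced by $\supp(x) \setminus A$ yields $\|P_A(x)\| \leqslant \mathbf C_g\|x\|$. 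To obtain SLC, given $x$ with $\|x\|_\infty \leqslant 1$, disjoint finite $A, B$ with $A\sqcup B\sqcup x$ and $|A|=|B|=m$, and signs $\varepsilon, \delta$, take $y = x + 1_{\varepsilon A} + 1_{\delta B}$. The coefficients of $y$ on $A\cup B$ all have modulus exactly $1$ and the rest have modulus at most $\|x\|_\infty \leqslant 1$, so $B$ is a greedy set of $y$ of size $m$; applying \eqref{e2} with the $m$-term approximation $1_{\varepsilon A}$ of $y$ yields
\[
\|x + 1_{\varepsilon A}\|\ =\ \|y - P_B(y)\|\ \leqslant\ \mathbf C_g\,\sigma_m(y)\ \leqslant\ \mathbf C_g\|y - 1_{\varepsilon A}\|\ =\ \mathbf C_g\|x + 1_{\delta B}\|.
\]

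For Part (ii), assume suppression unconditionality w.const $\mathbf K_s$ and SLC w.const $\mathbf C_b$. Given $x \in X$, a greedy set $\Lambda$ of size $m$, and any $m$-term approximation $f = \sum_{n \in A}a_n e_n$ (WLOG $|A| = m$), set $A' = A\setminus\Lambda$, $\Lambda' = \Lambda\setminus A$ (so $|A'| = |\Lambda'|$) and $\alpha = \min_{n\in\Lambda}|e_n^*(x)|$; by definition of a greedy set, $\max_{n\notin\Lambda}|e_n^*(x)|\leqslant\alpha$. The plan proceeds in three stages: first, start from the identity $x - P_\Lambda(x) = P_{A'}(x) + P_{(A\cup\Lambda)^c}(x)$; second, observe that $P_{(A\cup\Lambda)^c}(x) = P_{(A\cup\Lambda)^c}(x-f)$ (since $f$ is supported on $A$) and bound this piece by $\mathbf K_s\|x-f\|$ via suppression unconditionality; third, use SLC, after suitable normalization, to ``swap'' $P_{A'}(x)$ with a corresponding $\Lambda'$-indexed expression whose norm is then controlled by suppression unconditionality applied to $x - f$ (noting $P_{\Lambda'}(x-f) = P_{\Lambda'}(x)$ since $\Lambda'\cap A = \emptyset$).

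The main obstacle is the swap step: \eqref{e10} as stated applies only to indicators with coefficients of exact modulus $1$, yet the coordinates in $P_{A'}(x)$ and $P_{\Lambda'}(x)$ have varying magnitudes. The remedy is to scale the auxiliary vector by $\alpha^{-1}$ so that all relevant coordinates lie in $[-1,1]$ in modulus (using that $\alpha$ dominates every coefficient of $x$ outside $\Lambda$), zero out the $A'$-coordinates to form a legitimate base vector $u$ with $\|u\|_\infty\leqslant 1$, invoke \eqref{e10} to transfer $1_{\varepsilon A'}$ to $1_{\delta \Lambda'}$ for appropriate signs, and then unscale. Careful tracking of magnitudes through the scaling — and ensuring the ``leftover'' contribution from replacing genuine coefficients by signs is absorbed by suppression unconditionality rather than inflating the $\mathbf C_b$ factor — is the technical heart of the argument, producing the final constant $\mathbf K_s\mathbf C_b$.
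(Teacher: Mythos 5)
This theorem is quoted from \cite{DKOSZ} and the paper gives no proof of it; the closest internal analogue is the author's own Theorem \ref{m1} (the $\lambda$-version), whose proof at $\lambda=1$ is the argument you are after. Your Part (i) is correct and is the standard perturbation argument: both the suppression-unconditionality and the SLC deductions check out, including the choice $\alpha>\|x\|_\infty$ that makes $A$ a greedy set of $y$ and the use of $\alpha\sum_{n\in A}\sgn(e_n^*(x))e_n$, resp.\ $1_{\varepsilon A}$, as the competing $|A|$-term approximation.

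Part (ii) has a genuine gap, and it is structural rather than a matter of ``careful tracking.'' You decompose $x-P_\Lambda(x)=P_{A'}(x)+P_{(A\cup\Lambda)^c}(x)$ and propose to bound the two summands \emph{separately} (the second by $\mathbf K_s\|x-f\|$, the first by an SLC swap followed by another suppression-unconditionality bound). Recombining by the triangle inequality can only yield an additive constant of the form $\mathbf K_s+\mathbf C_b\mathbf K_s$, never the product $\mathbf K_s\mathbf C_b$ claimed in the statement; this is precisely the difference between the older estimate $\mathbf K_s+\mathbf C_d\mathbf K_u^2$ of \cite{DKOSZ} quoted in the introduction and the sharp one. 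Moreover, the ``obstacle'' you flag (SLC only handles coefficients of modulus exactly $1$) is resolved not by truncation-plus-bookkeeping but by a convexity averaging that costs nothing: since $|e_n^*(x)|\leqslant\alpha$ for $n\in A'=A\setminus\Lambda$, the vector $P_{A'}(x)$ lies in the closed convex hull of $\{\alpha 1_{\delta A'}:\delta \text{ a sign}\}$, so
\[
\|x-P_\Lambda(x)\|\ \leqslant\ \sup_{\delta}\bigl\|P_{(A\cup\Lambda)^c}(x)+\alpha 1_{\delta A'}\bigr\|
\ \leqslant\ \mathbf C_b\bigl\|P_{(A\cup\Lambda)^c}(x)+\alpha 1_{\varepsilon\Lambda'}\bigr\|,
\]
with $\varepsilon_n=\sgn(e_n^*(x))$ (this is exactly Lemma \ref{l1}\,ii) of the paper at $\lambda=1$). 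A single application of sign-matching suppression unconditionality (Proposition \ref{pKsKu}), using $\alpha\leqslant|e_n^*(x)|$ for $n\in\Lambda'$ and reinserting the $B$-coordinates of $x-f$, then gives $\leqslant\mathbf K_s\mathbf C_b\|x-f\|$. In short: the two norm inequalities must be \emph{composed} on the whole vector, not applied to separate pieces and summed. Rewrite Part (ii) along the lines of the proof of Theorem \ref{m1}\,iii) in the paper.
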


Later, Albiac and Ansorena \cite{AA} characterized almost greedy bases w.const $1$.

\begin{thm}\label{char1ag}\cite[Theorem 2.3]{AA}
A basis in a Banach space is almost greedy w.const $1$ if and only if it is SLC w.const $1$. 
\end{thm}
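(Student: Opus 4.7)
For the forward direction (almost greedy w.const $1$ implies SLC w.const $1$), the plan is to cook up a test vector for which both candidate sets are greedy. Given $x$ with $\|x\|_\infty \leq 1$, disjoint finite sets $A, B$ with $A \sqcup B \sqcup x$ and $|A| = |B|$, and signs $\varepsilon, \delta$, I form $y = x + 1_{\varepsilon A} + 1_{\delta B}$. Since $|e_n^*(y)| = 1$ for $n \in A \cup B$ and $|e_n^*(y)| \leq 1$ for $n \in \supp(x)$, both $A$ and $B$ are greedy sets of $y$ of size $|A|$. Applying almost greedy w.const $1$ to $y$ with the greedy set $B$ then gives
\[
\|x + 1_{\varepsilon A}\| \;=\; \|y - P_B(y)\| \;\leq\; \gamma_{|A|}(y) \;\leq\; \widetilde{\sigma}_{|A|}(y) \;\leq\; \|y - P_A(y)\| \;=\; \|x + 1_{\delta B}\|,
\]
which is exactly the SLC inequality with constant $1$.

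For the reverse direction, fix $x$, a greedy set $\Lambda$ of size $m$, and any $A$ with $|A| = m$; the aim is $\|x - P_\Lambda(x)\| \leq \|x - P_A(x)\|$, from which $\gamma_m(x) \leq \widetilde{\sigma}_m(x)$ follows by taking the infimum over $A$. Replacing $x$ by $x - P_{\Lambda \cap A}(x)$ reduces to the case $\Lambda \cap A = \emptyset$; setting $u = P_{(\Lambda \cup A)^c}(x)$ and normalizing $\max_{n \notin \Lambda}|e_n^*(x)| = 1$, the goal becomes $\|u + P_A(x)\| \leq \|u + P_\Lambda(x)\|$ under $\|u\|_\infty \leq 1$, $\|P_A(x)\|_\infty \leq 1$, and $|e_n^*(x)| \geq 1$ for $n \in \Lambda$. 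A preliminary observation is that SLC w.const $1$ implies sign invariance $\|z + 1_{\varepsilon F}\| = \|z + 1_{\varepsilon' F}\|$ for $\|z\|_\infty \leq 1$ and $F \sqcup z$, obtained by routing through an auxiliary set of size $|F|$ disjoint from both $z$ and $F$.

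I plan to interpolate between $u + P_A(x)$ and $u + P_\Lambda(x)$ through the vectors $u + 1_{\varepsilon A}$ and $u + 1_{\delta \Lambda}$, where $\delta_n := \sgn(e_n^*(x))$. Step (a), $\|u + P_A(x)\| \leq \|u + 1_{\varepsilon A}\|$, comes from writing $P_A(x) = \sum_{n \in A} a_n e_n$ with $|a_n| \leq 1$, expressing each $a_n$ as the mean of a random unit modulus scalar $\eta_n$ (elementary in the real case; in the complex case via $a = \tfrac{1}{2}(e^{i(\phi+\theta)} + e^{i(\phi-\theta)})$ with $|a| = \cos\theta$, $\phi = \arg a$), and combining the triangle inequality with sign invariance: $\|u + P_A(x)\| \leq \mathbb{E}\|u + 1_{\eta A}\| = \|u + 1_{\varepsilon A}\|$. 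Step (b), $\|u + 1_{\varepsilon A}\| = \|u + 1_{\delta \Lambda}\|$, is SLC applied directly. For the substantive step (c), $\|u + 1_{\delta \Lambda}\| \leq \|u + P_\Lambda(x)\|$, I first prove a single-coordinate monotonicity lemma: if $\|w\|_\infty \leq s$, $|c| = 1$, and $n \notin \supp(w)$, then $\|w + ce_n\| \leq \|w + tce_n\|$ for every $t \geq \max(s, 1)$. This follows from $\|w + tce_n\| = t\|w/t + ce_n\|$ combined with the averaging bound $\|w/t + (c/t)e_n\| \leq \|w/t + ce_n\|$ used for step (a). I then enumerate $\Lambda = \{n_1, \ldots, n_m\}$ so that $|e_{n_1}^*(x)| \leq \cdots \leq |e_{n_m}^*(x)|$ and swap the coefficient on $n_k$ from $\delta_{n_k}$ to $e_{n_k}^*(x)$ one index at a time. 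With this ordering, at each swap the hypothesis $|e_{n_k}^*(x)| \geq \max(\|w\|_\infty, 1)$ of the monotonicity lemma is satisfied by the current base vector $w$, so the lemma applies throughout. Chaining (a), (b), and (c) yields $\|u + P_A(x)\| \leq \|u + P_\Lambda(x)\|$ and completes the proof.

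The main obstacle is step (c): SLC w.const $1$ conveys no direct lattice information, and the coefficient perturbation involved in (c) has support inside the vector being perturbed, so SLC cannot be invoked on it directly. The key insight is that the averaging bound from step (a), which superficially looks like a one-sided inequality about shrinking coefficients, secretly also yields the needed monotonicity in the opposite direction through the rescaling $\|w + tce_n\| = t\|w/t + ce_n\|$, and that a smallest-first ordering of $\Lambda$ then propagates this monotonicity cleanly through all $m$ coordinate swaps.
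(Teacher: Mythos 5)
The paper offers no proof of this statement: it is quoted directly from Albiac--Ansorena \cite{AA}, so there is nothing internal to compare against, and your argument must be judged as a self-contained reconstruction. As such it is correct. The forward direction is the standard test-vector computation with $y=x+1_{\varepsilon A}+1_{\delta B}$, both $A$ and $B$ being greedy sets of $y$ of the same cardinality. The substance is the reverse direction: since no quasi-greedy constant is available, you cannot route through the truncation operator (Theorem \ref{bto}) the way the quantitative version (Theorem \ref{AAag}) does, and your substitute --- deducing the monotonicity $\|w+ce_n\|\leqslant\|w+tce_n\|$ for $t\geqslant\max(s,1)$ from homogeneity together with the convexity/averaging bound, and then swapping the coefficients on $\Lambda$ in increasing order of modulus so that the hypothesis $\|w\|_\infty\leqslant t_k$ is maintained at every step --- is exactly the mechanism that lets the constant survive as $1$. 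One point you should make explicit: your sign-invariance lemma presupposes an auxiliary set disjoint from $\supp(z)\cup F$, which need not exist for an arbitrary $z\in X$ with $\|z\|_\infty\leqslant 1$ (e.g.\ when $x$ has full support, $\supp(u)=(\Lambda\cup A)^c$). In each place you actually use it the required set is available --- in step (a) take the auxiliary set to be $\Lambda\setminus A$ itself, and in the single-coordinate lemma of step (c) take the auxiliary index in $A\setminus\Lambda$, which is nonempty unless $A=\Lambda$, in which case there is nothing to prove --- but as written the lemma is stated in a generality in which it is false, so the applications should name their auxiliary sets.
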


\begin{thm}\label{AAag}\cite[Theorem 3.3]{AA}
The following hold
\begin{enumerate}
    \item[i)] If $\mathcal{B}$ is almost greedy w.const $\mathbf C_a$, then it is suppression quasi-greedy w.const $\mathbf C_a$ and is SLC w.const $\mathbf C_a$.
    \item[ii)] Conversely, if $\mathcal{B}$ is suppression quasi-greedy w.const $\mathbf{C}_\ell$ and is SLC w.const $\mathbf C_b$, then it is almost greedy w.const $\mathbf{C}_\ell \mathbf C_b$. 
\end{enumerate}
\end{thm}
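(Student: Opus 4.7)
My plan is to prove the two directions separately. Direction (i) is quick and follows by applying the almost greedy inequality to a single cleverly chosen test vector; direction (ii) is the technical core and requires a sandwich argument combining SLC with the standard truncation property of suppression quasi-greedy bases.

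For direction (i), assume $\mathcal B$ is almost greedy w.const $\mathbf C_a$. The suppression quasi-greedy bound is immediate: $\widetilde\sigma_m(x)\leqslant \|x\|$ always, since one can take $A\subset \mathbb N\setminus\supp(x)$ with $|A|=m$, for which $P_A(x)=0$, and then \eqref{e3} yields $\gamma_m(x)\leqslant \mathbf C_a\|x\|$. For the SLC bound with the same constant, I will test on $z:=x+1_{\varepsilon A}+1_{\delta B}$, where $\|x\|_\infty\leqslant 1$, $|A|=|B|=m$, and $A\sqcup B\sqcup x$. Every coefficient of $z$ on $A\cup B$ has modulus $1$, and every coefficient elsewhere has modulus $\leqslant 1$, so both $A$ and $B$ lie in $\mathcal G(z,m)$. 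Applying \eqref{e3} to $z$ with greedy set $B$ and test projection $A$ gives
\[
\|x+1_{\varepsilon A}\|=\|z-P_B(z)\|\leqslant \mathbf C_a\widetilde\sigma_m(z)\leqslant \mathbf C_a\|z-P_A(z)\|=\mathbf C_a\|x+1_{\delta B}\|,
\]
which is SLC w.const $\mathbf C_a$.

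For direction (ii), fix $x$, $m$, $\Lambda\in\mathcal G(x,m)$ and an arbitrary $A\subset \mathbb N$ with $|A|=m$. Set $t:=\min_{n\in\Lambda}|e_n^*(x)|$; the case $t=0$ is trivial, so by rescaling I take $t=1$. Let $A':=A\setminus\Lambda$, $\Lambda':=\Lambda\setminus A$ (so $|A'|=|\Lambda'|$), and $u:=P_{(A\cup\Lambda)^c}(x)$. A direct expansion gives the clean decompositions
\[
x-P_\Lambda(x)=u+P_{A'}(x),\qquad x-P_A(x)=u+P_{\Lambda'}(x),
\]
with $\|u\|_\infty\leqslant 1$, $|e_n^*(x)|\leqslant 1$ on $A'$, and $|e_n^*(x)|\geqslant 1$ on $\Lambda'$. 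My goal is to prove $\|u+P_{A'}(x)\|\leqslant \mathbf C_\ell\mathbf C_b\|u+P_{\Lambda'}(x)\|$; taking infimum over $A$ then finishes. I sandwich the two sides through the pattern $u+1_{\delta\Lambda'}$ with $\delta_n:=\sgn(e_n^*(x))$ on $\Lambda'$. For the left bound, each coefficient $e_n^*(x)$ with $n\in A'$ can be represented as $\int \xi\,d\mu_n(\xi)$ for a probability measure $\mu_n$ on the unit-modulus scalars (a $\pm 1$ convex combination in the real case; a symmetric-arc average in the complex case). Product-measure integration combined with Jensen's inequality and SLC applied to each realization (legal because $\|u\|_\infty\leqslant 1$, $|A'|=|\Lambda'|$, and $A'\sqcup\Lambda'\sqcup u$) yields
\[
\|u+P_{A'}(x)\|\leqslant \int\|u+1_{\tilde\varepsilon A'}\|\,d\mu\leqslant \mathbf C_b\|u+1_{\delta\Lambda'}\|.
\]
For the right bound, the vector $y:=u+P_{\Lambda'}(x)$ has $|e_n^*(y)|\geqslant 1$ exactly on $\Lambda'$ and $|e_n^*(y)|\leqslant 1$ elsewhere, so truncating its coefficients at level $1$ produces exactly $u+1_{\delta\Lambda'}$; the standard truncation lemma for suppression quasi-greedy bases then gives $\|u+1_{\delta\Lambda'}\|\leqslant \mathbf C_\ell\|y\|$. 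Chaining the two estimates completes the proof.

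The main obstacle is the truncation step: replacing the potentially large $\Lambda'$-coefficients of $y$ by signed unit coefficients at the cost of a single factor $\mathbf C_\ell$. This is the well-known truncation principle for quasi-greedy bases (see e.g.\ \cite{AABW}); the cleanest derivation uses the integral representation $y=\int_0^\infty h(s)\,ds$ with $h(s)=\sum_{|e_n^*(y)|>s}\sgn(e_n^*(y))e_n$ together with uniform suppression quasi-greedy bounds on the sign vectors $h(s)$, which are supported on greedy sets of $y$. The complex-scalar convexity representation is also worth a short check, but reduces to elementary arc-averaging.
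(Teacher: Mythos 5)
Your proof is correct: the paper itself states this result only as a citation to Albiac--Ansorena without reproving it, and both halves of your argument use exactly the techniques the paper deploys for its $\lambda$-generalizations in Theorem \ref{m2} (the test vector $x+1_{\varepsilon A}+1_{\delta B}$, in which both $A$ and $B$ are greedy sets, for part (i); norm convexity/SLC followed by the truncation operator $T_\alpha$ of Theorem \ref{bto} for part (ii)). The only nitpick is that a set $A\subset\mathbb{N}\setminus\supp(x)$ with $|A|=m$ need not exist when $\supp(x)$ is cofinite; instead choose $A$ on which the coefficients of $x$ are small, giving $\widetilde{\sigma}_m(x)\leqslant\|x\|+\varepsilon$ for every $\varepsilon>0$ and hence the same conclusion.
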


Recently, Berasategui, Bern\'{a}, and Lassalle \cite{BBL} characterized strong partially greedy bases w.const $1$ by introducing partial symmetry for largest coefficients (PSLC). 
\begin{defi}\normalfont\label{bblpslc}
A basis is said to be partially symmetric for largest coefficients (PSLC) w.const $C$ if 
$$\|x+1_{\varepsilon A}\|\ \leqslant\ C\|x+1_{\delta B}\|,$$
for all $x\in X$ with $\|x\|_\infty\leqslant 1$, for all $A, B\in\mathbb{N}^{<\infty}$ with $|A|\leqslant |B|$ and $A<\supp(x)\sqcup B$, and for all signs $\varepsilon, \delta$. The least constant $C$ is denoted by $\mathbf C_{pl}$.
\end{defi}

\begin{thm}\label{1pg}\cite[Theorem 1.15]{BBL}
A basis $\mathcal{B}$ is strong partially greedy w.const $1$ if and only if $\mathcal{B}$ is PSLC w.const $1$.
\end{thm}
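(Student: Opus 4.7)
The plan is to prove both implications directly.

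For the forward direction (strong partially greedy w.const $1 \Rightarrow$ PSLC w.const $1$), given data $x$, $A$, $B$, $\varepsilon$, $\delta$ satisfying the PSLC hypotheses, I would construct an auxiliary vector on which strong partial greediness yields the desired estimate. The subtle point is that $\max A$ may exceed $|B|$, so a partial sum at index $\max A$ is not permitted at greedy order $|B|$. To remedy this, I would set $D:=[1,\max A]\setminus A$ (empty if $A=\emptyset$), pick any signs $\varepsilon''$ on $D$, and consider
\[
y \;=\; x + 1_{\varepsilon A} + 1_{\varepsilon'' D} + t\,1_{\delta B},\qquad t>1.
\]
Since $A<\supp(x)\sqcup B$ forces $D$ to be disjoint from $\supp(x)\cup B$, the set $\Lambda:=B\cup D$ is a greedy set of $y$ of order $|B|+|D|$ with $y-P_\Lambda(y)=x+1_{\varepsilon A}$, and the partial sum at $n:=\max A=|A|+|D|\leq|B|+|D|$ equals $1_{\varepsilon A}+1_{\varepsilon'' D}$, producing $y-S_n(y)=x+t\,1_{\delta B}$. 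Strong partial greediness at order $|B|+|D|$ with constant $1$ then gives $\|x+1_{\varepsilon A}\|\leq\|x+t\,1_{\delta B}\|$; letting $t\to 1^+$ closes the argument.

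For the converse (PSLC w.const $1 \Rightarrow$ strong partially greedy w.const $1$), I would fix $x$, a greedy set $\Lambda$ of order $m$, and $n\leq m$, and decompose $\supp(x)=A\sqcup B\sqcup C\sqcup D$ by $C=\Lambda\cap[1,n]$, $A=([1,n]\setminus\Lambda)\cap\supp(x)$, $B=\Lambda\setminus[1,n]$, $D=\supp(x)\setminus(\Lambda\cup[1,n])$, so that $x-P_\Lambda(x)=P_A(x)+P_D(x)$ and $x-S_n(x)=P_B(x)+P_D(x)$. Setting $\alpha:=\min_{i\in\Lambda}|e_i^*(x)|$ (assuming $\alpha>0$; the case $\alpha=0$ forces $\supp(x)\subseteq\Lambda$ and is trivial), one has $|A|\leq|B|$, $A<B\cup D$, $\|P_{A\cup D}(x)\|_\infty\leq\alpha$, and $|e_i^*(x)|\geq\alpha$ on $B$. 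Applying PSLC w.const $1$ to $z=P_D(x)/\alpha$ (which has $\|z\|_\infty\leq 1$) together with convexity of the norm in the $A$-coefficients (reducing to the extreme points $|a_i|=\alpha$) yields
\[
\|P_A(x)+P_D(x)\|\;\leq\;\|P_D(x)+\alpha\,1_{\delta B}\|
\]
for every sign choice $\delta$ on $B$.

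The main obstacle is then to show $\|P_D(x)+\alpha\,1_{\delta B}\|\leq\|P_B(x)+P_D(x)\|$ when $\delta_i=\sgn(e_i^*(x))$, since here $B$ lies inside $\supp(P_B(x)+P_D(x))$ and PSLC cannot be applied directly. I would handle this via the following truncation lemma, which I plan to derive from PSLC w.const $1$: for every $w$, every greedy set $\Lambda'$ of $w$, and every $\beta\in[0,\min_{i\in\Lambda'}|e_i^*(w)|]$,
\[
\|w-P_{\Lambda'}(w)+\beta\,1_{\delta \Lambda'}\|\;\leq\;\|w\|,\qquad \delta_i=\sgn(e_i^*(w)).
\]
The lemma is proved iteratively: order $\Lambda'=\{i_1,\dots,i_{|\Lambda'|}\}$ with $|e_{i_j}^*(w)|$ descending in $j$, set $w^{(0)}=w$ and $w^{(j)}=w^{(j-1)}-(|e_{i_j}^*(w)|-\beta)\delta_{i_j}e_{i_j}$. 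At step $j$ the auxiliary vector $z:=w^{(j-1)}-e_{i_j}^*(w)\,e_{i_j}$ satisfies $i_j\notin\supp(z)$ and, by the descending ordering, $\|z\|_\infty\leq|e_{i_j}^*(w)|$; PSLC applied with $A'=\emptyset$, $B'=\{i_j\}$ to $z/\|z\|_\infty$ together with convexity of $t\mapsto\|z+t\,\delta_{i_j}e_{i_j}\|$ on $[0,\infty)$ produces both $\|z\|\leq\|w^{(j-1)}\|$ and $\|w^{(j)}\|\leq\|w^{(j-1)}\|$. Iterating to $j=|\Lambda'|$ and then applying the lemma with $w=P_B(x)+P_D(x)$, $\Lambda'=B$, and $\beta=\alpha$ delivers the missing inequality and completes the chain.
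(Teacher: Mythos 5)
The paper does not prove this statement; it is quoted verbatim as \cite[Theorem 1.15]{BBL}, so there is no in-paper argument to compare against. Judged on its own, your proof is correct. The forward direction is clean: padding with $D=[1,\max A]\setminus A$ so that $B\cup D$ is a greedy set of $y$ while $S_{\max A}(y)=1_{\varepsilon A}+1_{\varepsilon'' D}$ is exactly the right device, and the inequality $\max A=|A|+|D|\leqslant |B|+|D|$ is what makes the partial-sum error admissible at that greedy order (the perturbation $t>1$ is not even needed, since $t=1$ already makes $B\cup D$ a greedy set). The converse follows the standard template of Albiac--Wojtaszczyk and Albiac--Ansorena for constant-one characterizations, adapted to the one-sided setting: the decomposition into $A,B,C,D$, the convexity/extreme-point argument to replace the small coefficients on $A$ by $\alpha 1_{\varepsilon A}$, and the one-coordinate-at-a-time truncation lemma deduced from PSLC with $A'=\emptyset$ plus convexity of $t\mapsto\|z+t\delta_{i_j}e_{i_j}\|$. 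That lemma is the key point, since PSLC w.const $1$ is not assumed alongside quasi-greediness here, and your derivation of it from PSLC alone is sound. Two cosmetic remarks: in the truncation lemma you should normalize $z$ by $|e_{i_j}^*(w)|$ rather than $\|z\|_\infty$ (the definition of PSLC only needs $\|\cdot\|_\infty\leqslant 1$, and this avoids both the extra convexity step and the degenerate case $z=0$); and you should note explicitly that when $\alpha>0$ one has $B\subset\supp(x)$, so the sign $\delta_i=\sgn(e_i^*(x))$ on $B$ is the genuine sign of a nonzero coefficient. Neither affects the validity of the argument.
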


\section{Preliminaries}
In this section, we present several useful results that will be used in due course. 

\subsection{Properties of unconditionality and quasi-greediness}

\begin{prop}\label{pKsKu} Fix $N\in \mathbb{N}$. Let $(e_n)_{n=1}^\infty$ be an unconditional basis with suppression-unconditional constant $\mathbf{K}_s$. For any scalars $a_1, \ldots, a_N, b_1, \ldots, b_N$ so that $|a_n|\leqslant |b_n|$ and $\sgn(a_n) = \sgn(b_n)$ whenever $a_nb_n\neq 0$ for all $1\leqslant n\leqslant N$, we have
$$\left\|\sum_{n=1}^N a_ne_n\right\|\ \leqslant \ \mathbf{K}_s\left\|\sum_{n=1}^N b_n e_n\right\|.$$
\end{prop}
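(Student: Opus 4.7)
The plan is to reduce the desired inequality to a statement about suppression projections, so that the constant $\mathbf{K}_s$ (rather than the generally larger $\mathbf{K}_u$) genuinely suffices. First, I would observe that the two hypotheses force $a_n = c_n b_n$ for all $n$, where $c_n\in[0,1]$ is a \emph{real} scalar: if $b_n=0$ then $|a_n|\leqslant 0$ gives $a_n=0$, and if $b_n\neq 0$ then $\sgn(a_n)=\sgn(b_n)$ amounts to $a_n/|a_n|=b_n/|b_n|$, whence $a_n=(|a_n|/|b_n|)\,b_n$. Setting $c_n:=|a_n|/|b_n|$ (and $c_n:=0$ when $b_n=0$), the claim reduces to proving $\|\sum_{n=1}^N c_n b_n e_n\|\leqslant \mathbf{K}_s\|\sum_{n=1}^N b_n e_n\|$ for real coefficients $c_n\in[0,1]$.

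Next, I would write the left-hand side as a convex-type combination of suppression projections of $\sum_{n=1}^N b_n e_n$ via a layer-cake decomposition. Enumerating the distinct values of $c_1,\ldots,c_N$ together with $0$ in increasing order as $0=d_0<d_1<\cdots<d_k\leqslant 1$, and setting $A_j:=\{n:c_n\geqslant d_j\}\subset\{1,\ldots,N\}$, one checks directly that $c_n=\sum_{j=1}^k(d_j-d_{j-1})$ where the sum ranges over those $j$ with $c_n\geqslant d_j$. Multiplying by $b_n e_n$ and summing in $n$ gives the identity
$$\sum_{n=1}^N c_n b_n e_n\ =\ \sum_{j=1}^k(d_j-d_{j-1})\,P_{A_j}\!\left(\sum_{n=1}^N b_n e_n\right).$$

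The final step is routine: applying the triangle inequality and the defining bound $\|P_{A_j}(\,\cdot\,)\|\leqslant\mathbf{K}_s\|\cdot\|$ of suppression unconditionality to each summand yields
$$\left\|\sum_{n=1}^N c_n b_n e_n\right\|\ \leqslant\ \sum_{j=1}^k(d_j-d_{j-1})\,\mathbf{K}_s\left\|\sum_{n=1}^N b_n e_n\right\|\ =\ d_k\,\mathbf{K}_s\left\|\sum_{n=1}^N b_n e_n\right\|\ \leqslant\ \mathbf{K}_s\left\|\sum_{n=1}^N b_n e_n\right\|,$$
since the weights telescope to $d_k-d_0=d_k\leqslant 1$. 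I do not expect a significant obstacle here; the only subtle point is confirming that the sign hypothesis really produces a \emph{real} scalar $c_n\in[0,1]$ in the complex case, which is exactly what the convention $\sgn(z)=z/|z|$ was set up to give.
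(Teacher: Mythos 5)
Your argument is correct and complete. One thing worth knowing: the paper itself supplies \emph{no} proof of Proposition \ref{pKsKu} --- the proof environment there is empty, the statement being treated as a standard folklore fact --- so there is no authorial argument to compare against. What you have written is precisely the canonical way to prove it: the sign hypothesis converts the problem into bounding $\sum_n c_n b_n e_n$ with real $c_n\in[0,1]$, and the layer-cake (Abel summation) identity
$$\sum_{n=1}^N c_n b_n e_n\ =\ \sum_{j=1}^k(d_j-d_{j-1})\,P_{A_j}\!\left(\sum_{n=1}^N b_n e_n\right)$$
expresses the left-hand side as a subconvex combination of suppression projections, which is exactly what lets you get the constant $\mathbf{K}_s$ instead of the a priori larger $\mathbf{K}_u$. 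Your handling of the degenerate cases is also right: when $b_n=0$ the inequality $|a_n|\leqslant|b_n|$ forces $a_n=0$, and when $a_n=0\neq b_n$ the sign condition is vacuous but $c_n=0$ still works, so $c_n=|a_n|/|b_n|\in[0,1]$ is well defined throughout and is genuinely real even over $\mathbb{C}$ thanks to the convention $\sgn(z)=z/|z|$. The final telescoping to $d_k\leqslant 1$ closes the argument. No gaps.
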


\begin{proof}
\end{proof}

We shall need the uniform boundedness of the truncation operator. For each $\alpha > 0$, we define the truncation function $T_\alpha$ as follows: for  $b\in\mathbb{C}$,
$$T_{\alpha}(b)\ :=\ \begin{cases}\sgn(b)\alpha, &\mbox{ if }|b| > \alpha,\\ b, &\mbox{ if }|b|\leqslant \alpha.\end{cases}$$
We define the truncation operator $T_\alpha: X\rightarrow X$ as
$$T_{\alpha}(x)\ :=\ \sum_{n=1}^\infty T_\alpha(e_n^*(x))e_n \ =\ \alpha 1_{\varepsilon \Gamma_{\alpha}(x)}+ P_{\Gamma_\alpha^c(x)}(x),$$
where $\Gamma_\alpha(x) = \{n: |e_n^*(x)| > \alpha\}$ and $\varepsilon = (\sgn(e_n^*(x)))_{n=1}^\infty$.

\begin{thm}\label{bto}\cite[Lemma 2.5]{BBG} Let $\mathcal{B}$ be suppression quasi-greedy w.const $\mathbf C_\ell$. Then for any $\alpha > 0$, $\|T_\alpha\|\leqslant \mathbf{C}_\ell$.
\end{thm}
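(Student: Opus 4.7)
The plan is to express $T_\alpha(x)$ as a convex combination of suppression projections $P_{\Gamma_k^c}(x)$ along a chain of nested greedy sets of $x$, and then to apply the suppression quasi-greedy hypothesis term by term.

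First I would reduce to finitely supported $x$ by density. Since $e_n^*(x)\to 0$ for every $x\in X$, the set $\Gamma_\alpha(x) = \{n:|e_n^*(x)|>\alpha\}$ is automatically finite, and the formula $T_\alpha(x) = \alpha\,1_{\varepsilon\Gamma_\alpha(x)} + P_{\Gamma_\alpha(x)^c}(x)$ exhibits $T_\alpha$ as a continuous function of $x$ away from the locus where some $|e_n^*(x)|$ equals $\alpha$; a small perturbation of $\alpha$ lets me assume $|e_n^*(x)|\ne\alpha$ for every $n$.

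Next I would order $\Gamma:=\Gamma_\alpha(x)=\{n_1,\ldots,n_m\}$ so that $\beta_j := |e_{n_j}^*(x)|$ is strictly decreasing, set $\beta_{m+1} := \alpha$, introduce the nested greedy sets $\Gamma_k := \{n_1,\ldots,n_k\}$ (with $\Gamma_0 := \emptyset$), and define
\[
w_0 := \tfrac{\alpha}{\beta_1},\ \ w_k := \tfrac{\alpha}{\beta_{k+1}} - \tfrac{\alpha}{\beta_k}\ \ (1\le k\le m{-}1),\ \ w_m := 1 - \tfrac{\alpha}{\beta_m}.
\]
These weights are non-negative (since $\beta_1>\cdots>\beta_m>\alpha$) and sum telescopically to $1$.

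The central step is to verify the coordinate-wise identity
\[
T_\alpha(x)\ =\ \sum_{k=0}^m w_k\, P_{\Gamma_k^c}(x),
\]
which reduces to checking that at each $n_j\in\Gamma$ the right-hand side contributes $e_{n_j}^*(x)\cdot(w_0+\cdots+w_{j-1}) = e_{n_j}^*(x)\cdot \alpha/\beta_j = \alpha\,\sgn(e_{n_j}^*(x))$, and at each $n\in\Gamma^c$ it contributes $e_n^*(x)\cdot\sum_{k=0}^m w_k = e_n^*(x)$.

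With the identity in hand, I would observe that each $\Gamma_k$ for $1\le k\le m$ is a greedy set of $x$, since $\min_{n\in\Gamma_k}|e_n^*(x)| = \beta_k > \beta_{k+1} \ge \max_{n\notin\Gamma_k}|e_n^*(x)|$. Suppression quasi-greediness then yields $\|P_{\Gamma_k^c}(x)\|\le \mathbf{C}_\ell\,\|x\|$ for $k\ge 1$, while $\|P_{\Gamma_0^c}(x)\|=\|x\|$ trivially. Combining these via the triangle inequality together with $w_0\ge 0$ and $\mathbf{C}_\ell\ge 1$ gives
\[
\|T_\alpha(x)\|\ \le\ w_0\|x\| + (1-w_0)\,\mathbf{C}_\ell\|x\|\ \le\ \mathbf{C}_\ell\,\|x\|.
\]

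The substantive step is guessing the convex-combination identity; once it is in hand, the sharp constant $\mathbf{C}_\ell$ emerges without any iterative loss of constants that a layer-by-layer $T_{\beta_{k+1}}\circ T_{\beta_k}$ argument would incur. Verifying the identity itself is a routine coordinate-wise check that I do not expect to pose any real difficulty.
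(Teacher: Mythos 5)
Your argument is correct, and it is essentially the standard Abel-summation proof of the cited result \cite[Lemma 2.5]{BBG} (the paper itself states Theorem \ref{bto} by citation only): writing $T_\alpha(x)$ as a convex combination $\sum_k w_k\,(x-P_{\Gamma_k}(x))$ over a nested chain of greedy sets and applying suppression quasi-greediness to each term is exactly how the sharp constant $\mathbf{C}_\ell$ is obtained. Two cosmetic remarks: since $e_n^*(x)\to 0$ the set $\Gamma_\alpha(x)$ is finite for every $x$ and both sides of your identity differ from $x$ only on that finite set, so the opening reduction to finitely supported $x$ and the perturbation of $\alpha$ are unnecessary; and when several coefficients share the same modulus one cannot order $\beta_j$ strictly decreasingly, but a non-increasing order works verbatim (the corresponding $w_k$ vanish and $\Gamma_k$ is still a greedy set, as the defining inequality is non-strict).
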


\begin{lem}\label{l4}
Let $\lambda \geqslant 1$, $C\geqslant 1$, and $\mathcal{B}$ be a basis that satisfies 
$$\gamma_{\lceil\lambda m\rceil}(x)\ \leqslant\ C\|x\|, \forall x\in X, \forall m\in\mathbb{N}_0.$$
Then $\mathcal{B}$ is quasi-greedy. 
\end{lem}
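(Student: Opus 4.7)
The plan is to deduce the quasi-greedy bound $\gamma_m(x) \leqslant C' \|x\|$ for all $m$ from the hypothesis, which only gives the bound when the order of the greedy sum is of the special form $\lceil \lambda k \rceil$. The idea is a padding argument: given any greedy set $\Lambda$ of $x$ with $|\Lambda| = m$, extend it to a slightly larger greedy set $\Lambda'$ whose size is of the form $\lceil \lambda k \rceil$, apply the hypothesis to $\Lambda'$, and control the small correction $P_{\Lambda' \setminus \Lambda}(x)$ by a direct triangle-inequality estimate.

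More precisely, given $m \in \mathbb{N}_0$, I would choose $k := \lceil m/\lambda \rceil$, so that $\lceil \lambda k \rceil \geqslant m$ while $\lceil \lambda k \rceil - m \leqslant M_\lambda$ for some constant $M_\lambda$ depending only on $\lambda$ (a short arithmetic check gives, e.g., $M_\lambda \leqslant \lceil \lambda \rceil$). Starting from an arbitrary $\Lambda \in \mathcal{G}(x,m)$, I would form $\Lambda'$ by appending to $\Lambda$ the indices corresponding to the next largest coefficients $|e_n^*(x)|$, breaking ties arbitrarily, until $|\Lambda'| = \lceil \lambda k \rceil$. The resulting $\Lambda'$ is again a greedy set of $x$, since the smallest coefficient in $\Lambda'$ still dominates those outside, and $\Lambda \subset \Lambda'$.

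Writing $x - P_\Lambda(x) = (x - P_{\Lambda'}(x)) + P_{\Lambda' \setminus \Lambda}(x)$, the hypothesis bounds the first summand by $C\|x\|$ since $|\Lambda'| = \lceil \lambda k \rceil$. For the second summand, each coefficient satisfies $|e_n^*(x)| \leqslant \|x\|_\infty \leqslant (\sup_n \|e_n^*\|_*)\|x\|$, and there are at most $M_\lambda$ of them, so by the triangle inequality
\begin{equation*}
\|P_{\Lambda' \setminus \Lambda}(x)\| \ \leqslant\ M_\lambda \Bigl(\sup_n \|e_n\|\Bigr)\Bigl(\sup_n \|e_n^*\|_*\Bigr)\|x\|,
\end{equation*}
and the product in parentheses is finite by the M-boundedness and semi-normalization axioms. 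Combining and taking the supremum over $\Lambda \in \mathcal{G}(x,m)$ yields $\gamma_m(x) \leqslant C'\|x\|$ for a constant $C'$ depending only on $C$, $\lambda$, and the basis constants, which is exactly the quasi-greedy condition \eqref{e5}.

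There is no real obstacle here; the argument is essentially a padding-plus-triangle-inequality trick. The only points requiring mild care are (i) confirming that any greedy set of size $m$ can be enlarged to a greedy set of size $\lceil \lambda k \rceil$ (immediate from the definition of a greedy set), (ii) the trivial cases $m=0$ (where $\gamma_0(x) = \|x\|$) and $\lambda = 1$ (where the conclusion is literally the hypothesis), and (iii) verifying that the excess $\lceil \lambda k \rceil - m$ can be bounded uniformly in $m$ by a constant depending only on $\lambda$.
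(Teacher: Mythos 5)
Your argument is correct, but it is genuinely different from the paper's. The paper disposes of this lemma in one line by observing that $\{\lceil\lambda m\rceil : m\in\mathbb{N}\}$ is a \emph{crude basis} of orders and invoking Oikhberg's result on greedy algorithms with gaps (\cite[Proposition 4.1]{Oi}), which says that boundedness of the greedy operators along such a subsequence of orders already forces quasi-greediness. You instead give a self-contained padding argument: enlarge a greedy set of size $m$ to a greedy set of size $\lceil\lambda k\rceil$ with $k=\lceil m/\lambda\rceil$, and absorb the at most $\lceil\lambda\rceil$ extra terms by the crude bound $\|P_{\Lambda'\setminus\Lambda}(x)\|\leqslant M_\lambda\,(\sup_n\|e_n\|)(\sup_n\|e_n^*\|_*)\|x\|$. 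All the steps check out: the extension of a greedy set to a larger greedy set is legitimate (coefficients vanish at infinity, so the next largest ones exist), the arithmetic bound $\lceil\lambda k\rceil-m\leqslant\lceil\lambda\rceil$ holds, and the resulting constant $C'=C+\lceil\lambda\rceil\sup_n\|e_n\|\sup_n\|e_n^*\|_*$ is finite by M-boundedness and semi-normalization. What your approach buys is elementarity and an explicit constant; what it relies on is the fact that the specific sequence $\lceil\lambda m\rceil$ has \emph{bounded gaps}, not merely bounded ratios of consecutive terms. Oikhberg's machinery handles the weaker ratio condition (e.g.\ orders like $2^m$), where a term-by-term triangle inequality on the padding set would blow up; so the paper's route is the one that generalizes, while yours is the shortest honest proof for this particular lemma.
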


\begin{proof}
Note that $\{\lceil\lambda m\rceil: m\in \mathbb{N}\}$ is a crude basis and use \cite[Proposition 4.1]{Oi}.
\end{proof}

\begin{defi}\normalfont
A basis is $\lambda$-democratic w.const $C$ if 
\begin{equation}\label{e25} \|1_A\|\ \leqslant\ C\|1_B\|,\end{equation}
for all $A, B\in \mathbb{N}^{<\infty}$ with $\lambda |A| \leqslant |B|$. The smallest $C$ is denoted by $\mathbf C_{\lambda, d}$.
\end{defi}

\begin{lem}\label{l2}
Let $\mathcal{B}$ be a quasi-greedy basis. Pick $\lambda_1, \lambda_2\in [1,\infty)$ with $\lambda_1 < \lambda_2$. Then $\mathcal{B}$ is $\lambda_1$-democratic if and only if $\mathcal{B}$ is $\lambda_2$-democratic.
\end{lem}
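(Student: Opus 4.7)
The direction $\lambda_1$-democratic $\Rightarrow$ $\lambda_2$-democratic is immediate with the same constant: if $\lambda_1<\lambda_2$ and $\lambda_2|A|\leq|B|$, then also $\lambda_1|A|\leq|B|$, so the $\lambda_1$-democratic inequality applies. This half needs no hypothesis on $\mathcal{B}$, and yields $\mathbf{C}_{\lambda_2,d}\leq\mathbf{C}_{\lambda_1,d}$.

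For the converse, suppose $\mathcal{B}$ is $\lambda_2$-democratic with constant $C_2:=\mathbf{C}_{\lambda_2,d}$ and fix $A, B$ with $\lambda_1|A|\leq|B|$; write $n:=|A|$ and $m:=|B|$. My plan is to partition $A$ into a bounded number of pieces each of which is ``$\lambda_2$-democratically small'' against the original $B$. In the subcase $m\geq\lambda_2 n$, the $\lambda_2$-democratic inequality applies directly. In the subcase $\lambda_1 n\leq m<\lambda_2 n$, set $s:=\lfloor m/\lambda_2\rfloor$ and (assuming $s\geq 1$) decompose $A=A_1\sqcup\cdots\sqcup A_k$ with $|A_i|\leq s$ and $k=\lceil n/s\rceil$. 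Since $\lambda_2|A_i|\leq m$, each piece satisfies $\|1_{A_i}\|\leq C_2\|1_B\|$, and the triangle inequality gives
\[
\|1_A\|\;\leq\;\sum_{i=1}^{k}\|1_{A_i}\|\;\leq\;kC_2\|1_B\|.
\]
A routine estimate shows $k\leq 2\lambda_2/\lambda_1+1$ whenever $m\geq 2\lambda_2$, producing the $\lambda_1$-democratic bound with constant depending only on $\lambda_1,\lambda_2,C_2$.

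The only remaining ingredient is the boundary regime $m<\lambda_2$, where $s=0$ and the partition degenerates. But here $n\leq m/\lambda_1<\lambda_2/\lambda_1$ is also bounded, so both $\|1_A\|$ and $\|1_B\|$ are controlled by the M-boundedness and semi-normalization axioms ($\|1_A\|\leq n\sup_k\|e_k\|$, $\|1_B\|\geq 1/\sup_k\|e_k^*\|_*$), producing the desired inequality with a constant depending only on $\lambda_1,\lambda_2$ and the basis constants. Assembling the three cases finishes the proof. The main obstacle is simply the bookkeeping around $k$ and the boundary case; the conceptual content is just the triangle inequality combined with $\lambda_2$-democracy on each small piece. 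Quasi-greediness, if used at all, enters through the approximate monotonicity $\|1_{A'}\|\leq(1+\mathbf{C}_\ell)\|1_A\|$ for $A'\subset A$, which offers an alternative streamlining of the partition argument.
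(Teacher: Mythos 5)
Your proposal is correct, but it takes a genuinely different route from the paper's. The paper reduces both properties to the classical democratic property (which compares sets of \emph{equal} cardinality): the direction ``$\lambda$-democratic $\Rightarrow$ democratic'' is a partition argument much like yours, but the converse ``democratic $\Rightarrow$ $\lambda$-democratic'' is where quasi-greediness enters — one shrinks $B$ to a subset $D$ with $|D|=|A|$ and uses that $B\setminus D$ is a greedy set of $1_B$ to get $\|1_D\|\leqslant \mathbf C_\ell\|1_B\|$. Your argument instead compares $\lambda_1$- and $\lambda_2$-democracy directly by partitioning only $A$ and testing each piece against the \emph{full} set $B$; since the definition of $\lambda$-democracy already allows $|B|$ to exceed $\lambda|A|$, you never need to pass to a subset of $B$, so quasi-greediness is genuinely not used. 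Your version therefore proves a slightly stronger statement (the equivalence of $\lambda_1$- and $\lambda_2$-democracy for any M-bounded, semi-normalized basis), whereas the paper's detour through classical democracy is what it needs elsewhere (e.g., the equivalence of items (ii) and (v) in Theorem \ref{m3}). One bookkeeping point you should close: your bound $k\leqslant 2\lambda_2/\lambda_1+1$ is derived only for $m\geqslant 2\lambda_2$, and your degenerate case covers only $m<\lambda_2$, leaving the window $\lambda_2\leqslant m<2\lambda_2$ unaddressed; there $s\geqslant 1$ and $k\leqslant n\leqslant m/\lambda_1<2\lambda_2/\lambda_1$, so the same constant works, but this should be said explicitly.
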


\begin{proof}
It suffices to prove that for all $\lambda \geqslant 1$, being $\lambda$-democratic is equivalent to being democratic. Fix $\lambda \geqslant 1$. 

We prove that $\lambda$-democracy implies democracy. Assume that $\mathcal{B}$ is $\lambda$-democratic w.const $\mathbf C_{\lambda,d}$. Let $A, B\in\mathbb{N}^{<\infty}$ with $m:= |A| = |B|$. We consider two cases:
    \begin{enumerate}
        \item[i)] If $m\geqslant 2\lambda$, we partition $A$ into $v:=\lceil 2\lambda\rceil$ subsets, called $A_1, \ldots, A_v$. Then the size of each $A_n$ is 
        $$|A_n|\ \leqslant\ m/(2\lambda) + 1\ \leqslant\ m/\lambda\ =\ |B|/\lambda.$$
        By $\lambda$-democracy, we obtain 
        $$\|1_A\|\ \leqslant\ \sum_{n=1}^v \|1_{A_n}\|\ \leqslant\ v\mathbf C_{\lambda, d}\|1_B\|\ =\ \lceil 2\lambda\rceil\mathbf C_{\lambda, d}\|1_B\|.$$
        \item[ii)] If $m < 2\lambda$, we pick $j\in B$ and have
        $$\|1_{A}\| \ <\ 2\lambda\sup_{n}\|e_n\|\mbox{ and }1 \ =\ |e_j^*(1_B)|\ \leqslant\ \sup_{n}\|e_n^*\|_*\|1_B\|.$$
        Therefore, 
        $$\|1_A\| \ <\ 2\lambda \sup_n \|e_n\|\sup_{n}\|e_n^*\|_*\|1_B\|.$$
    \end{enumerate}
    Hence, $\mathcal{B}$ is democratic. 

Next, we prove that democracy implies $\lambda$-democracy. Assume that $\mathcal{B}$ is democratic w.const $\mathbf C_d$ and suppression quasi-greedy w.const $\mathbf C_\ell$. Let $A, B\in\mathbb{N}^{<\infty}$ with $m:= |A|$ and $|B|\geqslant \lambda m$. Let $D\subset B$ with $|D| = |A| = m$. We have
    $$\|1_A\|\ \leqslant\ \mathbf C_d\|1_D\|\ \leqslant\ \mathbf C_d\mathbf C_\ell \|1_B\|,$$
    for $B\backslash D$ is a greedy set of $1_B$. 
\end{proof}

\subsection{$\lambda$-SLC, $\lambda$-PSLC, and equivalent formulations}
We introduce two useful variations of SLC and PSLC along with their equivalent formulations.

\begin{defi}\label{d1}\normalfont
A basis $\mathcal B$ is said to be $\lambda$-SLC if there exists a constant $C\geqslant 1$ such that
\begin{equation}\label{e8}\|x+1_{\varepsilon A}\|\ \leqslant\ C\|x+1_{\delta B}\|,\end{equation}
for all $x\in X$ with $\|x\|_\infty\leqslant 1$, for all $A, B\in\mathbb{N}^{<\infty}$ with $\lambda |A|\leqslant |B|$ and $A\sqcup B\sqcup x$, and for all signs $\varepsilon, \delta$. The least constant $C$ verifying \eqref{e8} is denoted by $\mathbf C_{\lambda, ib}$.
\end{defi}

\begin{rek}\normalfont\label{r1}
By norm convexity, we have SLC w.const $C$ is equivalent to $1$-SLC w.const $C$. Indeed, one implication is obvious; let us prove that SLC w.const $C$ implies $1$-SLC w.const $C$. Pick $x\in X_c$ with $\|x\|_\infty\leqslant 1$ and $A, B\in\mathbb{N}^{<\infty}$ with $|A| < |B|$ and $A\sqcup B\sqcup x$. Pick signs $\varepsilon, \delta$. Assuming SLC w.const $C$, we want to show that $\|x+1_{\varepsilon A}\|\ \leqslant\ C\|x+1_{\delta B}\|$. Choose $D\in\mathbb{N}^{<\infty}$ with $D > A\cup B\cup \supp(x)$ and $|D| = |B| - |A|$. By norm convexity, 
$$\|x+1_{\varepsilon A}\|\ \leqslant\ \sup_{\theta}\|x+1_{\varepsilon A} + 1_{\theta D}\|\ \leqslant\ C\|x+1_{\delta B}\|,$$
where the second inequality is due to SLC w.const $C$. 
\end{rek}

\begin{prop}\label{ep1}Fix $\lambda\geqslant 1$.
A $\lambda$-SLC basis is $(\lambda^2+ \lambda)$-democratic.
\end{prop}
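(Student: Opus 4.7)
The plan is to reduce the democracy inequality to a single application of the $\lambda$-SLC inequality, using the overlap $U \cap V$ as the ``test vector" $x$. Given $U, V \in \mathbb{N}^{<\infty}$ with $(\lambda^2 + \lambda)|U| \leqslant |V|$, I would decompose
$$W\ :=\ U \cap V,\qquad U'\ :=\ U \setminus V,\qquad V'\ :=\ V \setminus U,$$
so that $W, U', V'$ are pairwise disjoint and $1_U = 1_W + 1_{U'}$, $1_V = 1_W + 1_{V'}$. Since $1_W$ has $\|1_W\|_\infty \leqslant 1$ and is supported on $W$, which is disjoint from both $U'$ and $V'$, an application of $\lambda$-SLC with $x = 1_W$, $A = U'$, $B = V'$, and trivial signs $\varepsilon = \delta = 1$ will yield
$$\|1_U\|\ =\ \|1_W + 1_{U'}\|\ \leqslant\ \mathbf{C}_{\lambda, ib}\|1_W + 1_{V'}\|\ =\ \mathbf{C}_{\lambda, ib}\|1_V\|,$$
once the size hypothesis $\lambda|U'| \leqslant |V'|$ of $\lambda$-SLC is verified.

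The remaining step is the elementary size check. From $|U'| \leqslant |U|$ and $|V'| = |V| - |W| \geqslant |V| - |U| \geqslant (\lambda^2 + \lambda - 1)|U|$, together with the observation that $\lambda \geqslant 1$ implies $\lambda \leqslant \lambda^2 + \lambda - 1$, we obtain
$$\lambda|U'|\ \leqslant\ \lambda|U|\ \leqslant\ (\lambda^2 + \lambda - 1)|U|\ \leqslant\ |V'|,$$
as required. This is precisely the place where the quadratic blow-up $\lambda^2 + \lambda$ enters: we need enough room in $V'$ to absorb the factor $\lambda$ in front of $|U'|$ after the overlap $W$ (whose size may be as large as $|U|$) has been deleted.

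No genuine obstacle arises in this argument; the only subtlety is that one must allow the degenerate cases $U' = \emptyset$ (i.e.\ $U \subseteq V$), in which $\lambda$-SLC is applied with $A = \emptyset$ and the size hypothesis $\lambda \cdot 0 \leqslant |V'|$ is automatic, and $U = V$, which is trivial. Putting everything together, the argument actually yields the quantitative bound $\mathbf{C}_{\lambda^2 + \lambda, d} \leqslant \mathbf{C}_{\lambda, ib}$, from which the qualitative statement of the proposition follows at once.
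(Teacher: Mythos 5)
Your proof is correct, but it takes a genuinely different route from the paper's. The paper argues by transitivity through an auxiliary set: given $A,B$ with $(\lambda^2+\lambda)|A|\leqslant|B|$, it picks $D$ disjoint from $A\cup B$ with $|D|=\lceil\lambda|A|\rceil$ and applies $\lambda$-SLC twice with $x=0$ (once to compare $1_A$ with $1_D$, once to compare $1_D$ with $1_B$); the quadratic threshold is exactly what is needed to guarantee $\lambda|D|=\lambda\lceil\lambda|A|\rceil\leqslant(\lambda^2+\lambda)|A|\leqslant|B|$, and the resulting bound is $\mathbf C_{\lambda^2+\lambda,d}\leqslant\mathbf C_{\lambda,ib}^2$. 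You instead make a single application of $\lambda$-SLC, using the overlap $1_{U\cap V}$ as the vector $x$ (which is legitimate: $\|1_{U\cap V}\|_\infty\leqslant 1$ and $U\setminus V$, $V\setminus U$, $U\cap V$ are pairwise disjoint), which both yields the sharper constant $\mathbf C_{\lambda,ib}$ and, as a bonus, actually proves more than is claimed: your size check only requires $|V'|\geqslant\lambda|U'|$, which already follows from $(\lambda+1)|U|\leqslant|V|$ (since then $|V'|\geqslant|V|-|U|\geqslant\lambda|U|\geqslant\lambda|U'|$), so your argument shows that a $\lambda$-SLC basis is in fact $(\lambda+1)$-democratic w.const $\mathbf C_{\lambda,ib}$, a stronger statement than the proposition. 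Your handling of the degenerate cases $U'=\emptyset$ and $U=V$ is also fine, since $\emptyset\in\mathbb{N}^{<\infty}$ is permitted in Definition \ref{d1}.
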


\begin{proof}Assume that our basis is $\lambda$-SLC w.const $\mathbf C_{\lambda, ib}$.
Let $A, B\in \mathbb{N}^{<\infty}$ with $A\neq \emptyset, (\lambda^2 + \lambda)|A|\leqslant |B|$. Choose $D$ disjoint from $A\cup B$ and $|D| = \lceil \lambda |A|\rceil$. By $\lambda$-SLC, $\|1_A\|\leqslant \mathbf C_{\lambda, ib}\|1_D\|$. Note that
$$\lambda |D|\ =\ \lambda \lceil\lambda |A|\rceil\ \leqslant\ \lambda(\lambda|A| + 1)\ \leqslant\ (\lambda^2 + \lambda)|A|\ \leqslant \ |B|.$$
Again by $\lambda$-SLC, $\|1_D\|\ \leqslant\ \mathbf C_{\lambda, ib}\|1_B\|$. Therefore, $\|1_A\|\ \leqslant\ \mathbf C^2_{\lambda, ib}\|1_B\|$ and so, our basis is $(\lambda^2+ \lambda)$-democratic.
\end{proof}

Let 
$\Omega_{\lambda, ib}\in [1,\infty)$ be the smallest constant (if exists) such that 
\begin{equation}\label{e13}\|x\|\ \leqslant\ \Omega_{\lambda, ib}\|x-P_A(x) + 1_{\varepsilon B}\|\end{equation}
holds for all $x\in X$ with $\|x\|_\infty\leqslant 1$, all signs $\varepsilon$, and all $A, B\in\mathbb{N}^{<\infty}$ with $\lambda |A|\leqslant |B|$ and $(A\cup \supp(x))\cap B = \emptyset$.
\begin{lem}\label{l1}
Let $\mathcal{B}$ be a basis. The following hold
\begin{enumerate}
    \item[i)] If $\mathcal{B}$ satisfies \eqref{e13}, then $\mathcal{B}$ is $\lambda$-SLC w.const $\Omega_{\lambda, ib}$.
    \item[ii)] If $\mathcal{B}$ is $\lambda$-SLC w.const $\mathbf C_{\lambda, ib}$, then $\mathcal{B}$ satisfies \eqref{e13} with 
    $\Omega_{\lambda, ib}\ \leqslant \  \mathbf C_{\lambda, ib}$.
\end{enumerate}
\end{lem}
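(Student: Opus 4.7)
The plan is to handle the two directions by a direct substitution that exchanges the signed indicator $1_{\varepsilon A}$ appearing in $\lambda$-SLC with the projection $P_A(x)$ appearing in \eqref{e13}, exploiting that $\|x\|_\infty \leq 1$ forces the $A$-coefficients of $x$ to have modulus at most one.

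For part (i), assume \eqref{e13} holds with constant $\Omega_{\lambda, ib}$, and let $y \in X$ with $\|y\|_\infty \leq 1$, signs $\varepsilon, \delta$, and sets $A, B$ with $\lambda|A| \leq |B|$ and $A \sqcup B \sqcup y$ be given. I would set $x := y + 1_{\varepsilon A}$. Because $A \cap \supp(y) = \emptyset$, the coefficients of $x$ on $A$ have modulus one and those off $A$ coincide with those of $y$, so $\|x\|_\infty \leq 1$ and $P_A(x) = 1_{\varepsilon A}$, giving $x - P_A(x) = y$. Feeding $x$, the pair $(A, B)$, and the sign $\delta$ into \eqref{e13} yields
\[
\|y + 1_{\varepsilon A}\| \ =\ \|x\| \ \leq\ \Omega_{\lambda, ib}\, \|x - P_A(x) + 1_{\delta B}\| \ =\ \Omega_{\lambda, ib}\, \|y + 1_{\delta B}\|,
\]
which is exactly the $\lambda$-SLC inequality with constant $\Omega_{\lambda, ib}$.

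For part (ii), assume $\mathcal{B}$ is $\lambda$-SLC w.const $\mathbf C_{\lambda, ib}$, and let $x, A, B, \varepsilon$ satisfy the hypotheses of \eqref{e13}. I would set $y := x - P_A(x) = P_{A^c}(x)$, which satisfies $\|y\|_\infty \leq 1$ and $\supp(y) \cap (A \cup B) = \emptyset$. Writing $P_A(x) = \sum_{n \in A}\alpha_n e_n$ with $|\alpha_n|\leq 1$, the key step is to represent $P_A(x)$ as an average of signed indicators: each $\alpha_n$ lies in the (closed) convex hull of the unit sphere of $\mathbb{K}$, and the product measure on the space of signs $\varepsilon' : A \to \mathbb{K}$ with $|\varepsilon'_n|=1$ produces a probability measure $\mu$ such that
\[
y + P_A(x) \ =\ \int \bigl(y + 1_{\varepsilon' A}\bigr)\,d\mu(\varepsilon').
\]
By the triangle inequality, $\|x\| = \|y + P_A(x)\| \leq \sup_{\varepsilon'}\|y + 1_{\varepsilon' A}\|$, and applying $\lambda$-SLC to $y$ (valid since $\lambda|A|\leq |B|$ and $A\sqcup B\sqcup y$) bounds each $\|y + 1_{\varepsilon' A}\|$ by $\mathbf C_{\lambda, ib}\|y + 1_{\varepsilon B}\|$, giving \eqref{e13} with $\Omega_{\lambda, ib} \leq \mathbf C_{\lambda, ib}$.

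The only non-cosmetic step is the averaging argument in part (ii). In the real case it is the elementary finite convex combination $\alpha = \tfrac{1+\alpha}{2}\cdot 1 + \tfrac{1-\alpha}{2}\cdot(-1)$, which turns $\mu$ into a sum over $\{\pm 1\}^A$. In the complex case one can use the two-term decomposition $\alpha = \tfrac{1}{2}(e^{i\theta_1}+e^{i\theta_2})$ valid for any $|\alpha|\leq 1$, or equivalently normalized Lebesgue measure on the unit circle; the same norm convexity principle is already invoked implicitly in Remark \ref{r1}, so either route is routine.
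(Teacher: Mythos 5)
Your proof is correct and follows essentially the same route as the paper: part (i) is the identical substitution $x \mapsto x + 1_{\varepsilon A}$, and part (ii) is the same convexity bound $\|x-P_A(x)+\sum_{n\in A}e_n^*(x)e_n\| \leqslant \sup_{\delta}\|x-P_A(x)+1_{\delta A}\|$ followed by an application of $\lambda$-SLC. The only difference is that you spell out the averaging over signs explicitly (real and complex cases), which the paper leaves implicit as the norm-convexity principle already invoked in Remark \ref{r1}.
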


\begin{proof}
Suppose that $\mathcal{B}$ satisfies \eqref{e13}. Choose $x, A, B, \varepsilon, \delta$ as in Definition \ref{d1}. Let $y = x+1_{\varepsilon A}$. By \eqref{e13}, we have
$$\|x+1_{\varepsilon A}\|\ =\ \|y\| \ \leqslant\ \Omega_{\lambda, ib}\|y - P_A(y) + 1_{\delta B}\|\ =\ \Omega_{\lambda, ib}\|x+1_{\delta B}\|.$$

Conversely, suppose that $\mathcal{B}$ is $\lambda$-SLC w.const $\mathbf C_{\lambda, ib}$. Choose $x, A, B, \varepsilon$ as in \eqref{e13}. We have
\begin{align*}\|x\|\ =\ \left\|x-P_A(x) + \sum_{n\in A}e_n^*(x)e_n\right\|&\ \leqslant\ \sup_{\delta}\left\|x-P_A(x) + 1_{\delta A}\right\|\\
&\ \leqslant\ \mathbf C_{\lambda, ib}\left\|x-P_A(x) + 1_{\varepsilon B}\right\|.
\end{align*}
This completes our proof. 
\end{proof}

The following is a variation of PSLC (Definition \ref{bblpslc}) with the introduction of $\lambda$. 

\begin{defi}\label{d10}\normalfont
A basis is $\lambda$-PSLC if there exists a constant $C\geqslant 1$ such that 
$$\|x+1_{\varepsilon A}\|\ \leqslant\ C\|x+1_{\delta B}\|,$$
for all $x\in X$ with $\|x\|_\infty\leqslant 1$, for all $A, B\in\mathbb{N}^{<\infty}$ with $(\lambda-1)\max A + |A|\leqslant |B|$ and $ A<\supp(x)\sqcup B$, and for all signs $\varepsilon, \delta$. The least constant $C$ is denoted by $\mathbf C_{\lambda, pl}$.
\end{defi}

Let $\Psi_{\lambda, pl}\in [1,\infty)$ be the smallest constant (if exists) such that 
\begin{equation}\label{e32}
    \|x\| \ \leqslant\ \Psi_{\lambda, pl}\|x-P_A(x) + 1_{\varepsilon B}\|
\end{equation}
holds for all $x\in X$ with $\|x\|_\infty\leqslant 1$, for all signs $\varepsilon$, and for all $A, B\in\mathbb{N}^{<\infty}$ with $(\lambda-1)\max A + |A|\leqslant |B|$ and $A < \supp(x-P_A(x))\sqcup B$.

\begin{lem}\label{l10}
Let $\mathcal{B}$ be a basis. The following hold
\begin{enumerate}
    \item[i)] If $\mathcal{B}$ satisfies \eqref{e32}, then $\mathcal{B}$ is $\lambda$-PSLC w.const $\Psi_{\lambda, pl}$.
    \item[ii)] If $\mathcal{B}$ is $\lambda$-PSLC w.const $\mathbf C_{\lambda, pl}$, then $\mathcal{B}$ satisfies \eqref{e32} with 
    $\Psi_{\lambda, pl}\ \leqslant \  \mathbf C_{\lambda, pl}$.
\end{enumerate}
\end{lem}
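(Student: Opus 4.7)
The plan is to mirror the proof of Lemma \ref{l1}, making only the adjustments needed to handle the one-sided order condition $A<\supp(x)\sqcup B$ instead of full disjointness, and the size condition $(\lambda-1)\max A+|A|\leqslant|B|$ instead of $\lambda|A|\leqslant|B|$. Both conditions transfer transparently under the substitutions I make, so the structural argument is essentially identical.

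For part (i), assume \eqref{e32} holds. Pick $x\in X$ with $\|x\|_\infty\leqslant 1$, signs $\varepsilon,\delta$, and $A,B\in\mathbb{N}^{<\infty}$ with $(\lambda-1)\max A+|A|\leqslant|B|$ and $A<\supp(x)\sqcup B$, as in Definition \ref{d10}. Set $y:=x+1_{\varepsilon A}$. Since $\supp(x)$ and $A$ are disjoint and both $x$ and $1_{\varepsilon A}$ have $\|\cdot\|_\infty\leqslant 1$, we get $\|y\|_\infty\leqslant 1$. Moreover $y-P_A(y)=x$, so $\supp(y-P_A(y))=\supp(x)$, and the ordering hypothesis $A<\supp(y-P_A(y))\sqcup B$ reduces to $A<\supp(x)\sqcup B$, which holds. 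Applying \eqref{e32} to the tuple $(y,A,B,\delta)$ yields
\[
\|x+1_{\varepsilon A}\|\ =\ \|y\|\ \leqslant\ \Psi_{\lambda,pl}\,\|y-P_A(y)+1_{\delta B}\|\ =\ \Psi_{\lambda,pl}\,\|x+1_{\delta B}\|,
\]
so $\mathcal{B}$ is $\lambda$-PSLC w.const $\Psi_{\lambda,pl}$.

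For part (ii), assume $\mathcal{B}$ is $\lambda$-PSLC w.const $\mathbf C_{\lambda,pl}$. Pick $x,A,B,\varepsilon$ satisfying the hypotheses of \eqref{e32}. Write
\[
\|x\|\ =\ \left\|x-P_A(x)+\sum_{n\in A}e_n^*(x)e_n\right\|\ \leqslant\ \sup_{\delta}\|x-P_A(x)+1_{\delta A}\|,
\]
where the inequality uses that $\sum_{n\in A}e_n^*(x)e_n$ lies in the closed convex hull of $\{1_{\delta A}:\delta\text{ a sign}\}$ (since $|e_n^*(x)|\leqslant 1$ for $n\in A$, and the closed unit disk in $\mathbb{K}$ is the closed convex hull of the unit sphere). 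Set $z:=x-P_A(x)$. Then $\|z\|_\infty\leqslant\|x\|_\infty\leqslant 1$, the size bound $(\lambda-1)\max A+|A|\leqslant|B|$ is our assumption, and $A<\supp(z)\sqcup B$ is precisely the ordering hypothesis of \eqref{e32}. For each sign $\delta$, $\lambda$-PSLC applied to $(z,A,B,\delta,\varepsilon)$ gives
\[
\|z+1_{\delta A}\|\ \leqslant\ \mathbf C_{\lambda,pl}\,\|z+1_{\varepsilon B}\|.
\]
Taking the supremum over $\delta$ and combining with the previous display yields $\|x\|\leqslant\mathbf C_{\lambda,pl}\|x-P_A(x)+1_{\varepsilon B}\|$, so $\Psi_{\lambda,pl}\leqslant\mathbf C_{\lambda,pl}$.

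I do not anticipate any real obstacle; the only points requiring care are (a) verifying that the one-sided support hypothesis in Definition \ref{d10} is indeed what results from removing $P_A(x)$ in \eqref{e32} (and vice versa), and (b) the convex-hull representation used in part (ii), which is already implicit in the analogous step of Lemma \ref{l1}.
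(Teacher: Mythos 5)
Your proof is correct and follows essentially the same route as the paper's: part (i) applies \eqref{e32} to $y=x+1_{\varepsilon A}$, and part (ii) uses the convex-hull bound $\|x\|\leqslant\sup_{\delta}\|x-P_A(x)+1_{\delta A}\|$ followed by $\lambda$-PSLC applied to $x-P_A(x)$. The extra verifications you include (that $\|y\|_\infty\leqslant 1$ and that the support/size hypotheses transfer) are exactly the details the paper leaves implicit.
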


\begin{proof}
Suppose that $\mathcal{B}$ satisfies \eqref{e32}. Choose $x, A, B, \varepsilon, \delta$ as in Definition \ref{d10}. Let $y = x+ 1_{\varepsilon A}$. By \eqref{e32}, we have
$$\|x+1_{\varepsilon A}\|\ =\ \|y\| \ \leqslant\ \Psi_{\lambda, pl}\|y - P_A(y) + 1_{\delta B}\|\ =\ \Psi_{\lambda, pl}\|x+1_{\delta B}\|.$$

Conversely, suppose that $\mathcal{B}$ is $\lambda$-PSLC w.const $\mathbf C_{\lambda, pl}$. Choose $x, A, B, \varepsilon$ as in \eqref{e32}. We have
\begin{align*}
    \|x\|\ =\ \left\|x-P_A(x) + \sum_{n\in A}e_n^*(x)e_n\right\|&\ \leqslant\ \sup_{\delta}\|x-P_A(x)+1_{\delta A}\|\\
    &\ \leqslant\ \mathbf C_{\lambda, pl}\|x-P_A(x) + 1_{\varepsilon B}\|.
\end{align*}
This completes our proof. 
\end{proof}
\section{Characterizations of $\lambda$-(almost) greedy bases}
The goal of this section is to establish several characterizations of $\lambda$-almost greedy bases as in Theorem \ref{m3}, which contains Theorem \ref{em1}. Furthermore, we construct an unconditional basis that is not almost greedy w.const $1$ but is $\lambda$-almost greedy w.const $1$ for some $\lambda > 1$ (Theorem \ref{em2}.) Along the way, we characterize $\lambda$-greedy w.const 1 and $\lambda$-almost greedy w.const 1 bases, assuming $1$-suppression unconditionality and $1$-suppression quasi-greediness, respectively.

\subsection{Characterizations of $\lambda$-almost greedy bases}
\begin{thm}\label{m2}
Let $\mathcal B$ be suppression quasi-greedy w.const $\mathbf C_\ell$. The following hold.
\begin{enumerate}
    \item[i)] If $\mathcal B$ is $1$-almost greedy w.const $\mathbf C_{1, a}$, then $\mathcal B$ is $1$-SLC w.const $\mathbf C_{1, a}$.
    \item[ii)] If $\mathcal B$ is $\lambda$-almost greedy w.const $\mathbf C_{\lambda, a}$, then $\mathcal B$ is $\lambda$-SLC w.const $\mathbf C_\ell\mathbf C_{\lambda, a}$.
    \item[iii)] If $\mathcal B$ is $\lambda$-SLC w.const $\mathbf C_{\lambda, ib}$, then $\mathcal B$ is $\lambda$-almost greedy w.const $\mathbf C_\ell\mathbf C_{\lambda, ib}$.
\end{enumerate}
\end{thm}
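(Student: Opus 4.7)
The plan for parts (i) and (ii) is a standard perturbation trick. Given $x\in X_c$ with $\|x\|_\infty\leqslant 1$, sets $A,B$, and signs $\varepsilon,\delta$ as in Definition \ref{d1}, write $m := |A|$ and consider the auxiliary vector $z := x + 1_{\varepsilon A} + \alpha 1_{\delta B}$ for a parameter $\alpha > 1$. The coefficients of $z$ on $B$ have modulus $\alpha > 1$ while those on $A$ are $1$ and those on $\supp(x)$ are at most $1$, so every subset of $B$ of the appropriate size is a greedy set of $z$; meanwhile, $z - P_A(z) = x + \alpha 1_{\delta B}$ is an admissible competitor for $\widetilde{\sigma}_m(z)$.

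For part (i) with $\lambda = 1$, I would first treat the case $|A| = |B|$ by taking the greedy set $\Lambda := B$ of size $m$. The $1$-almost greedy inequality then gives
\begin{equation*}
\|x+1_{\varepsilon A}\|\ =\ \|z - P_\Lambda(z)\|\ \leqslant\ \gamma_m(z)\ \leqslant\ \mathbf C_{1,a}\widetilde{\sigma}_m(z)\ \leqslant\ \mathbf C_{1,a}\|x+\alpha 1_{\delta B}\|,
\end{equation*}
and $\alpha\downarrow 1$ yields SLC w.const $\mathbf C_{1,a}$, which Remark \ref{r1} promotes to $1$-SLC w.const $\mathbf C_{1,a}$. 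For part (ii) with $\lambda > 1$, I would instead choose a greedy set $\Lambda\subset B$ of $z$ with $|\Lambda| = \lceil \lambda m\rceil$, possible since $\lambda|A|\leqslant|B|$ and $|B|\in\mathbb{N}$ force $\lceil \lambda m\rceil\leqslant|B|$. The $\lambda$-almost greedy hypothesis followed by $\alpha\downarrow 1$ then produces
\begin{equation*}
\|x + 1_{\varepsilon A} + 1_{\delta(B\setminus\Lambda)}\|\ \leqslant\ \mathbf C_{\lambda,a}\|x+1_{\delta B}\|.
\end{equation*}
To discard the residue $1_{\delta(B\setminus\Lambda)}$, set $y := x + 1_{\varepsilon A} + 1_{\delta(B\setminus\Lambda)}$ and note that $B\setminus\Lambda$ is a greedy set of $y$, because the coefficients of $y$ on $A\cup(B\setminus\Lambda)$ all have modulus $1$ and dominate $\|x\|_\infty\leqslant 1$. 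Suppression quasi-greediness applied to $y$ gives $\|x+1_{\varepsilon A}\| = \|y - P_{B\setminus\Lambda}(y)\|\leqslant \mathbf C_\ell\|y\|$, and chaining the two inequalities yields $\lambda$-SLC w.const $\mathbf C_\ell\mathbf C_{\lambda,a}$.

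For part (iii) I would run the converse via Lemma \ref{l1} and Theorem \ref{bto}. Fix $x\in X$, $m\in\mathbb{N}_0$, a set $F$ with $|F|=m$, and a greedy set $\Lambda$ of $x$ with $|\Lambda|=\lceil \lambda m\rceil$; set $A := F\setminus\Lambda$, $B := \Lambda\setminus F$, and $\alpha := \min_{n\in\Lambda}|e_n^*(x)|$ (the cases $\Lambda = \emptyset$ and $\alpha = 0$ being trivial). The bookkeeping
\[
|B|\ =\ \lceil \lambda m\rceil - m + |A|\ \geqslant\ (\lambda-1)m + |A|\ \geqslant\ (\lambda-1)|A|+|A|\ =\ \lambda|A|,
\]
valid because $|A|\leqslant m$, supplies the cardinality hypothesis of \eqref{e13}. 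Since $\Lambda$ is greedy, coefficients on $B\subseteq\Lambda$ have modulus at least $\alpha$ while those on $F^c\cap\Lambda^c$ have modulus at most $\alpha$, so $T_\alpha(P_{F^c}(x)) = \alpha 1_{\varepsilon B} + P_{F^c\cap\Lambda^c}(x)$ with $\varepsilon_n = \sgn(e_n^*(x))$, and Theorem \ref{bto} bounds its norm by $\mathbf C_\ell\|P_{F^c}(x)\|$. Applying the formulation \eqref{e13} of $\lambda$-SLC (Lemma \ref{l1}) to $y := P_{\Lambda^c}(x)/\alpha$, which satisfies $\|y\|_\infty\leqslant 1$, with the sets $A,B$ and the sign $\varepsilon$, yields
\begin{equation*}
\|P_{\Lambda^c}(x)\|\ =\ \alpha\|y\|\ \leqslant\ \mathbf C_{\lambda,ib}\|P_{F^c\cap\Lambda^c}(x) + \alpha 1_{\varepsilon B}\|\ \leqslant\ \mathbf C_\ell\mathbf C_{\lambda,ib}\|P_{F^c}(x)\|,
\end{equation*}
and taking the supremum over greedy $\Lambda$ and the infimum over $F$ establishes $\gamma_{\lceil\lambda m\rceil}(x)\leqslant \mathbf C_\ell\mathbf C_{\lambda,ib}\widetilde{\sigma}_m(x)$.

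The most delicate step will be the truncation identity in part (iii): verifying that $T_\alpha$ applied to $P_{F^c}(x)$ produces exactly $\alpha 1_{\varepsilon B} + P_{F^c\cap\Lambda^c}(x)$ requires carefully splitting $F^c$ into $B$ and $F^c\cap\Lambda^c$ and invoking the greedy property of $\Lambda$ on both pieces. This is precisely the shape that \eqref{e13} needs on its right-hand side, which is why the quasi-greedy and $\lambda$-SLC hypotheses combine so cleanly into the constant $\mathbf C_\ell\mathbf C_{\lambda,ib}$.
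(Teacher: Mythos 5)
Your proposal is correct and follows essentially the same route as the paper: part (ii) via the auxiliary vector $x+1_{\varepsilon A}+1_{\delta B}$ with a greedy subset $\Lambda\subset B$ of size $\lceil\lambda m\rceil$ and a suppression-quasi-greedy step to discard $1_{\delta(B\setminus\Lambda)}$, and part (iii) via Lemma \ref{l1} combined with the truncation-operator bound of Theorem \ref{bto}. The only cosmetic differences are your $\alpha>1$ perturbation and limit in (i)--(ii), which is harmless but unnecessary since the definition of a greedy set permits ties, and your direct argument for (i), which the paper instead obtains by citing Theorem \ref{AAag} together with Remark \ref{r1}.
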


\begin{proof}[Proof of Theorem \ref{m2}]

Item i) follows from Theorem \ref{AAag} and Remark \ref{r1}.

ii) Assume that $\mathcal{B}$ is $\lambda$-almost greedy w.const $\mathbf C_{\lambda, a}$. Let $x, A, B, \varepsilon, \delta$ be chosen as in Definition \ref{d1}. Set $m = |A|$ and $y = x + 1_{\varepsilon A} + 1_{\delta B}$. Choose $\Lambda\subset B$ with $|\Lambda| = \lceil\lambda m\rceil$. Then $\Lambda\in \mathcal{G}(y, \lceil \lambda m\rceil)$. We have
$$\|x+1_{\varepsilon A}\| \ =\ \|y-P_B(y)\|\ \leqslant\ \mathbf C_\ell\|y-P_\Lambda(y)\|\ \leqslant\ \mathbf C_\ell \mathbf C_{\lambda, a}\widetilde{\sigma}_m(y)\ \leqslant \ \mathbf C_\ell \mathbf C_{\lambda, a}\|x+1_{\delta B}\|.$$
Hence, $\mathcal{B}$ is $\lambda$-SLC w.const $\mathbf C_\ell\mathbf C_{\lambda, a}$.

iii) Assume that $\mathcal{B}$ is $\lambda$-SLC w.const $\mathbf C_{\lambda, ib}$. Let $x\in X$, $m\in \mathbb{N}$, $A\in \mathcal{G}(x, \lceil\lambda m\rceil)$, and $B\subset\mathbb{N}$ with $|B| = m$. Let $\alpha := \min_{n\in A}|e_n^*(x)|$. Then $\|x-P_A(x)\|_\infty\leqslant \alpha$. By Lemma \ref{l1}, Theorem \ref{bto}, and noticing that $\lambda|B\backslash A|\leqslant |A\backslash B|$, we have
\begin{align*}
    \|x-P_A(x)\|&\ \leqslant\ \mathbf C_{\lambda, ib}\left\|x-P_A(x)-P_{B\backslash A}(x) + \alpha\sum_{n\in A\backslash B}\sgn(e_n^*(x))e_n\right\|\\
    &\ =\ \mathbf C_{\lambda, ib}\left\|T_\alpha\left(P_{(A\cup B)^c}(x) + \sum_{n\in A\backslash B}e_n^*(x)e_n\right)\right\|\\
    &\ \leqslant\ \mathbf C_\ell\mathbf C_{\lambda, ib}\left\|P_{(A\cup B)^c}(x) + \sum_{n\in A\backslash B}e_n^*(x)e_n\right\|\\
    &\ =\ \mathbf C_\ell\mathbf C_{\lambda, ib}\left\|x-P_B(x)\right\|.
\end{align*}
Taking the sup over all $A\in \mathcal{G}(x, \lceil\lambda m\rceil)$ and the inf over all $B\in\mathbb{N}^{<\infty}$ with $|B| = m$, we see that $\mathcal{B}$ is $\lambda$-almost greedy w.const $\mathbf C_\ell\mathbf C_{\lambda, ib}$.
\end{proof}

\begin{cor}\label{c2}
Let $\mathcal B$ be suppression quasi-greedy w.const $1$. Then $\mathcal{B}$ is $\lambda$-almost greedy w.const $1$ if and only if it is $\lambda$-SLC w.const $1$. 
\end{cor}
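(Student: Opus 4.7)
The plan is to observe that Corollary \ref{c2} is an immediate consequence of items ii) and iii) of Theorem \ref{m2} applied in the special case $\mathbf C_\ell = 1$. Since the hypothesis assumes suppression quasi-greediness w.const $1$, the factor $\mathbf C_\ell$ appearing in the bounds of Theorem \ref{m2} simply drops out, so both constants on either side of the implication are preserved exactly.

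For the forward direction, I would assume $\mathcal{B}$ is $\lambda$-almost greedy w.const $1$, i.e.\ $\mathbf C_{\lambda, a} = 1$. Applying Theorem \ref{m2}(ii) yields that $\mathcal{B}$ is $\lambda$-SLC w.const $\mathbf C_\ell\mathbf C_{\lambda,a} = 1\cdot 1 = 1$. Note that when $\lambda = 1$ one could alternatively invoke item i) to avoid the $\mathbf C_\ell$ factor altogether, but under the stated hypothesis this distinction is irrelevant.

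For the reverse direction, I would assume $\mathcal{B}$ is $\lambda$-SLC w.const $1$, i.e.\ $\mathbf C_{\lambda, ib} = 1$. Applying Theorem \ref{m2}(iii) gives that $\mathcal{B}$ is $\lambda$-almost greedy w.const $\mathbf C_\ell\mathbf C_{\lambda, ib} = 1\cdot 1 = 1$.

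There is essentially no obstacle here, since all the technical work (the use of the truncation operator bound from Theorem \ref{bto}, the equivalent formulation of $\lambda$-SLC from Lemma \ref{l1}, and the handling of the sets $A\setminus B$ versus $B\setminus A$ with the cardinality constraint $\lambda|B\setminus A|\leqslant |A\setminus B|$) is already encapsulated in Theorem \ref{m2}. The only point worth flagging is that both items of Theorem \ref{m2} are stated for $\lambda \geqslant 1$, which matches the implicit range of $\lambda$ in the corollary, so no boundary case needs separate attention.
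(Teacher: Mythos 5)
Your proposal is correct and is exactly the argument the paper intends: Corollary \ref{c2} follows immediately from Theorem \ref{m2} (ii) and (iii) by setting $\mathbf C_\ell = 1$, so that the constants $\mathbf C_\ell\mathbf C_{\lambda,a}$ and $\mathbf C_\ell\mathbf C_{\lambda,ib}$ both reduce to $1$. The paper gives no separate proof precisely because this specialization is immediate.
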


\begin{defi}\normalfont
A basis $\mathcal B$ is said to be $\lambda$-greedy if $\mathcal B$ satisfies \eqref{e1}. The least constant $C$ is denoted by $\mathbf C_{\lambda, g}$.
\end{defi}

We state characterizations of almost greedy in the next theorem with our introduction of $\lambda$-SLC, $\lambda$-almost greedy, and $\lambda$-greedy. For previous characterizations of almost greedy bases, see \cite[Theorem 3.3]{DKKT} and \cite[Theorem 6.3]{AABW}.

\begin{thm}\label{m3}
Let $\mathcal{B}$ be a basis. The following are equivalent
\begin{enumerate}
    \item[i)] $\mathcal B$ is almost greedy.
    \item[ii)] $\mathcal B$ is quasi-greedy and democratic.
    \item[iii)] $\mathcal B$ is $\lambda$-greedy for all (some) $\lambda > 1$.
    \item[iv)] $\mathcal B$ is quasi-greedy and SLC.
    \item[v)] $\mathcal B$ is quasi-greedy and is $\lambda$-democratic for all (some) $\lambda \geqslant 1$.
    \item[vi)] $\mathcal B$ is quasi-greedy and $\lambda$-SLC for all (some) $\lambda \geqslant 1$.
    \item[vii)] $\mathcal B$ is $\lambda$-almost greedy for all (some) $\lambda \geqslant 1$.
\end{enumerate}
\end{thm}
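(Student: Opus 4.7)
The plan is to close a cycle of implications, leaning almost entirely on the tools already assembled. Four of the equivalences are essentially in hand: (i) $\Leftrightarrow$ (ii) is the classical Dilworth--Kalton--Kutzarova--Temlyakov characterization, (i) $\Leftrightarrow$ (iii) is Theorem \ref{mt}, and (i) $\Leftrightarrow$ (iv) is Theorem \ref{AAag}. So what actually needs to be added is the incorporation of (v), (vi), and (vii), and the management of the universal/existential quantifiers. I would organize the argument around the cycle
\begin{equation*}
\text{(i)} \Longrightarrow \text{(vi) for all } \lambda \geqslant 1 \Longrightarrow \text{(vii) for all } \lambda \geqslant 1 \Longrightarrow \text{(vii) for some } \lambda \geqslant 1 \Longrightarrow \text{(i)},
\end{equation*}
supplemented by the direct equivalence (ii) $\Leftrightarrow$ (v).

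The equivalence (ii) $\Leftrightarrow$ (v) is immediate from Lemma \ref{l2}, which, under quasi-greediness, identifies $\lambda$-democracy with democracy for every $\lambda \geqslant 1$; this simultaneously handles both the ``for all'' and ``for some'' quantifiers in (v).

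For (i) $\Rightarrow$ (vi) in the strong ``for all $\lambda \geqslant 1$'' form, I would invoke Theorem \ref{AAag} to get quasi-greediness together with SLC, then use Remark \ref{r1} to identify SLC with 1-SLC, and finally observe that the hypothesis $\lambda|A|\leqslant |B|$ appearing in Definition \ref{d1} is strictly more restrictive than $|A|\leqslant |B|$ whenever $\lambda\geqslant 1$, so 1-SLC trivially entails $\lambda$-SLC with the same constant. The next link (vi) for all $\lambda$ $\Rightarrow$ (vii) for all $\lambda$ is exactly Theorem \ref{m2} iii), and the weakening (vii) for all $\lambda$ $\Rightarrow$ (vii) for some $\lambda$ is tautological.

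The closing implication (vii) for some $\lambda \geqslant 1$ $\Rightarrow$ (i) is the one requiring real chaining. Assume the basis is $\lambda$-almost greedy for some fixed $\lambda \geqslant 1$. Lemma \ref{l4} delivers quasi-greediness; Theorem \ref{m2} ii) then yields $\lambda$-SLC; Proposition \ref{ep1} upgrades this to $(\lambda^2 + \lambda)$-democracy; and Lemma \ref{l2} (using the quasi-greediness already secured) collapses $(\lambda^2+\lambda)$-democracy to plain democracy. This establishes (ii), which gives (i). Intersecting this cycle with the known equivalences among (i)--(iv) closes the argument. I do not foresee a serious obstacle: the proof is essentially bookkeeping, and the only nontrivial ingredient is the fact that the weakest existential form of (vii) already suffices to recover (i), via the Proposition \ref{ep1}--Lemma \ref{l2}--Theorem \ref{AAag} pipeline. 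Care should be taken only to verify that all constants in the cited lemmas are quantitatively controlled, so that ``for some $\lambda$'' in the hypothesis does not inadvertently require ``for all $\lambda$'' in an intermediate step.
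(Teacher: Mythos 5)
Your argument is correct in substance and draws on exactly the same toolbox as the paper (the classical equivalences (i)$\Leftrightarrow$(ii)$\Leftrightarrow$(iii)$\Leftrightarrow$(iv), Lemma \ref{l2} for (v), Lemma \ref{l4} and Theorem \ref{m2} for (vii), and the Proposition \ref{ep1}--Lemma \ref{l2} pipeline to get back to democracy). Two remarks. First, a minor difference in routing: for (i)$\Rightarrow$(vii) you go through (vi) and Theorem \ref{m2}~iii), whereas the paper goes through (iii) and the trivial inequality $\sigma_m(x)\leqslant\widetilde{\sigma}_m(x)$; both are fine, the paper's being slightly more economical since it avoids invoking the quasi-greedy constant.

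Second, there is one node of the equivalence your cycle never reaches: the existential form of (vi). Your chain is (i) $\Rightarrow$ (vi)-for-all $\Rightarrow$ (vii)-for-all $\Rightarrow$ (vii)-for-some $\Rightarrow$ (i), and (vi)-for-all $\Rightarrow$ (vi)-for-some is tautological, but nothing in the cycle takes ``quasi-greedy and $\lambda_0$-SLC for \emph{some} $\lambda_0$'' back into the equivalence class. This is not a missing idea --- the required sub-argument (quasi-greedy $+$ $\lambda_0$-SLC $\Rightarrow$ $(\lambda_0^2+\lambda_0)$-democratic by Proposition \ref{ep1} $\Rightarrow$ democratic by Lemma \ref{l2} $\Rightarrow$ (ii)) appears verbatim inside your proof of (vii)-for-some $\Rightarrow$ (i) --- but it needs to be stated once more with (vi)-for-some as the hypothesis. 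The paper packages precisely this step as Lemma \ref{l3} ($\lambda_1$-SLC $\Leftrightarrow$ $\lambda_2$-SLC under quasi-greediness), which is why its treatment of (vi) is airtight for both quantifiers. Add that one line and your proof is complete.
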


The following lemma shall be used in the proof of Theorem \ref{m3}. 
\begin{lem}\label{l3}
Let $\mathcal{B}$ be a quasi-greedy basis. Pick $\lambda_1, \lambda_2\in [1,\infty)$ with $\lambda_1 < \lambda_2$. Then $\mathcal{B}$ is $\lambda_1$-SLC if and only if it is $\lambda_2$-SLC. 
\end{lem}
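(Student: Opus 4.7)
The plan is to treat the two directions separately. The direction $\lambda_1$-SLC $\Rightarrow$ $\lambda_2$-SLC is immediate: any pair $(A,B)$ with $\lambda_2|A|\leqslant|B|$ automatically satisfies $\lambda_1|A|\leqslant|B|$ since $\lambda_1\leqslant\lambda_2$, so the $\lambda_1$-SLC inequality applies to every pair relevant for $\lambda_2$-SLC with the same constant.

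For the reverse direction, my strategy is to funnel $\lambda_2$-SLC through democracy and the classical characterization of almost greediness. Assuming $\mathcal{B}$ is quasi-greedy and $\lambda_2$-SLC, Proposition \ref{ep1} yields $(\lambda_2^2+\lambda_2)$-democracy, which by Lemma \ref{l2} is equivalent, under quasi-greediness, to ordinary democracy. The classical equivalence ``almost greedy $\Leftrightarrow$ quasi-greedy $+$ democratic'' of Dilworth--Kalton--Kutzarova--Temlyakov then implies that $\mathcal{B}$ is almost greedy; Theorem \ref{AAag}\,i) in turn yields SLC, which is $1$-SLC by Remark \ref{r1}. Since $\lambda_1\geqslant 1$, the same triviality used for the forward direction finally upgrades $1$-SLC to $\lambda_1$-SLC, completing the argument.

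The main point to verify is that this chain does not secretly invoke Theorem \ref{m3} itself. Every step relies only on Proposition \ref{ep1}, Lemma \ref{l2}, Theorem \ref{AAag}, and a well-known equivalence predating the present paper, so no circularity arises. A more self-contained but strictly more technical alternative would be to partition $A$ into pieces $A_i$ with $\lambda_2|A_i|\leqslant|B|$, apply $\lambda_2$-SLC piecewise, and reassemble via the truncation operator of Theorem \ref{bto}; I expect the main obstacle there to be that bounding $\|x+1_{\varepsilon(A\setminus A_i)}+1_{\delta B}\|$ from above by $\|x+1_{\delta B}\|$ is not a free consequence of quasi-greediness (only the reverse inequality comes for free, since $A\setminus A_i$ is a greedy set of the larger vector), so the reassembly would ultimately re-derive the democratic content of the shortcut above.
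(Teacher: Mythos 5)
Your proposal is correct and follows essentially the same route as the paper: the trivial direction from the nesting of the defining conditions, and the substantive direction via Proposition \ref{ep1}, Lemma \ref{l2}, the Dilworth--Kalton--Kutzarova--Temlyakov characterization of almost greediness, Theorem \ref{AAag}, and Remark \ref{r1}. No circularity arises, exactly as you note.
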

\begin{proof}
We prove that for all $\lambda > 1$, $\mathcal{B}$ is $\lambda$-SLC if and only if it is SLC. Fix $\lambda > 1$. By Remark \ref{r1}, SLC is the same as $1$-SLC, which implies $\lambda$-SLC. We prove the converse. Assume $\mathcal{B}$ is $\lambda$-SLC, which implies $(\lambda^2+\lambda)$-democracy according to Proposition \ref{ep1}. By Lemma \ref{l2}, $\mathcal{B}$ is democratic. As $\mathcal{B}$ is democratic and quasi-greedy, $\mathcal{B}$ is almost greedy by \cite[Theorem 3.3]{DKKT} and thus, is SLC according to Theorem \ref{AAag}.
\end{proof}

\begin{proof}[Proof of Theorem \ref{m3}] 
The equivalence among (i), (ii), and (iii) is a consequence of \cite[Theorem 3.3]{DKKT}. We deduce from Theorem \ref{AAag} that (i) and (iv) are equivalent. In turn, (ii) and (v) are equivalent by Lemma \ref{l2}. Finally, (iv) and (vi) are equivalent by Lemma \ref{l3}. To complete the proof, we shall show that (i) implies (vii), and (vii) implies (iv). 

To prove that (i) implies (vii), we fix $\lambda_0\geqslant 1$ and show that there exists $C\geqslant 1$ such that
\begin{equation}\label{e26}\gamma_{\lceil \lambda_0 m\rceil}(x) \ \leqslant\ C\widetilde{\sigma}_m(x), \forall x\in X, \forall m\in \mathbb{N}_0.\end{equation}
If $\lambda_0 = 1$, then \eqref{e26} is exactly the definition of almost greedy bases and we are done. Suppose that $\lambda_0 > 1$. Since (i) is equivalent to (iii), we know that $\mathcal{B}$ is $\lambda_0$-greedy; that is, there exists some $C'\geqslant 1$ such that 
\begin{equation}\label{e27}\gamma_{\lceil \lambda_0 m\rceil}(x) \ \leqslant\ C'\sigma_m(x), \forall x\in X, \forall m\in \mathbb{N}_0.\end{equation}
As \eqref{e27} implies \eqref{e26} with $C = C'$, we are done. 

It remains to show that (vii) implies (iv). Assume that $\mathcal{B}$ is $\lambda_0$-almost greedy for some $\lambda_0 \geqslant 1$. Then there exists $C\geqslant 1$ such that 
\begin{equation}\label{e28}\gamma_{\lceil \lambda_0 m\rceil}(x) \ \leqslant\ C\widetilde{\sigma}_m(x), \forall x\in X, \forall m\in \mathbb{N}_0.\end{equation}
Since $\widetilde{\sigma}_m(x)\leqslant \|x\|$, we obtain
$$\|x-P_{\Lambda}(x)\|\ \leqslant\ C\|x\|, \forall x\in\mathbb{N}, \forall m\in\mathbb{N}_0, \forall \Lambda\in \mathcal{G}(x, \lceil\lambda_0 m\rceil),$$
which implies that $\mathcal{B}$ is quasi-greedy by Lemma \ref{l4}. On the other hand, by Theorem \ref{m2}, $\mathcal{B}$ is  $\lambda_0$-SLC, which implies SLC according to Lemma \ref{l3} and Remark \ref{r1}.
This completes our proof.
\end{proof}

\subsection{$\lambda$-(almost) greedy bases w.const $1$}

\begin{proof}[Proof of Theorem \ref{em2}]
Let $\omega = (w(n))_{n=1}^\infty\subset \mathbb{R}$ such that $0 < \inf_n w(n) < \sup_n w(n) < \infty$. We renorm $\ell_1$ as follows: for $x = (x_1, x_2, x_3, \ldots)\in \ell_1$, let 
$$\|x\| \ =\ \sum_{n=1}^\infty w(n)|x_n|.$$
Let $\mathcal{B} = (e_n)_{n=1}^\infty$ be the canonical basis. Under $\|\cdot\|$, $\mathcal{B}$ is not SLC w.const $1$. Indeed, since $\inf_n w(n) < \sup_n w(n)$, we choose $i, j\in \mathbb{N}$ such that $w(i) < w(j)$. Then $\|e_i\| = w(i) < w(j) = \|e_j\|$. By Theorem \ref{char1ag}, $\mathcal B$ is not almost greedy w.const $1$. However, if we set $\lambda =  \frac{\sup_n w(n)}{\inf_n w(n)}$, then we can prove that $\mathcal{B}$ satisfies
\begin{equation}\label{e20}\gamma_{\lceil\lambda m\rceil}(x)\ \leqslant\ \sigma_m(x), \forall x\in \ell_1, \forall m\in \mathbb{N}_0.\end{equation}
Indeed, pick $x = (x_1, x_2, x_3, \ldots)\in\ell_1$, $m\in \mathbb{N}$, $A \in \mathcal G(x, \lceil\lambda m\rceil)$, and $B\subset\mathbb{N}$ with $|B| = m$. Also, pick $(b_n)_{n\in B}\subset\mathbb{K}$. Set $t := \min_{n\in A}|x_n|$.
Since $A$ is a greedy set and $|A|\geqslant \lambda |B|$ implies $|A\backslash B|\geqslant \lambda |B\backslash A|$, we have  
\begin{equation}\label{e21}
    \|P_{B\backslash A}(x)\|\ \leqslant\ t\sup_{n}w(n)|B\backslash A|\ \leqslant\ \frac{t}{\lambda}\sup_{n}w(n)|A\backslash B|.
\end{equation}
Furthermore, 
\begin{equation}\label{e22}
    |A\backslash B|\ \leqslant\ \frac{1}{\inf_n w(n)}\sum_{n\in A\backslash B}w(n)\cdot 1\ =\   \frac{1}{\inf_n w(n)} \|1_{A\backslash B}\|.
\end{equation}
We obtain 
\begin{eqnarray*}
    \|x-P_A(x)\|&\ \leqslant&\ \|x-P_A(x)\| - \|P_{B\backslash A}(x)\| + \frac{t}{\lambda}\sup_{n}w(n)||A\backslash B|\\
    &\ \leqslant&\ \|x-P_A(x)\| - \|P_{B\backslash A}(x)\| + \|t1_{A\backslash B}\|\\
    &\ =&\ \|x-P_{A\cup B}(x) + t1_{A\backslash B}\|\\
    &\ \leqslant&\ \left\|x-P_{A\cup B}(x) + P_{A\backslash B}(x) + \sum_{n\in B}(x_n - b_n)e_n\right\|\\
    &\ = &\ \left\|x-\sum_{n\in B}b_n e_n\right\|,
\end{eqnarray*}
where we have used \eqref{e21} to deduce the first inequality; the second one follows from \eqref{e22}; the third inequality is due to the fact that $\mathcal B$ is unconditional w.const $\mathbf K_u = 1$.
Since $A, B$ and $(b_n)_{n\in B}$ are arbitrarily chosen, we complete our proof of \eqref{e20}.
\end{proof}

\begin{rek}\normalfont
The above renorming of $\ell_1$ indicates that though a basis may not be almost greedy w.const $1$, it can still give the best $m$-term approximation if we allow greedy sums to be of size greater than $m$ by a constant factor. 
\end{rek}

We have an analog of Theorem \ref{m2} for the $\lambda$-greedy case.
\begin{thm}\label{m1}
Let $\mathcal B$ be suppression unconditional w.const $\mathbf K_s$. The following hold
\begin{enumerate}
    \item[i)] If $\mathcal B$ is $1$-greedy w.const $\mathbf C_{1, g}$, then $\mathcal B$ is $1$-SLC w.const $\mathbf C_{1, g}$. 
    \item[ii)] If $\mathcal B$ is $\lambda$-greedy w.const $\mathbf C_{\lambda, g}$, then $\mathcal B$ is $\lambda$-SLC w.const $\mathbf K_s\mathbf C_{\lambda, g}$.
    \item[iii)] If $\mathcal B$ is $\lambda$-SLC w.const $\mathbf C_{\lambda, ib}$, then $\mathcal B$ is $\lambda$-greedy w.const $\mathbf K_s\mathbf C_{\lambda, ib}$.
\end{enumerate}
\end{thm}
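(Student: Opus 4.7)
The plan is to follow the template of Theorem \ref{m2}, swapping the quasi-greedy hypothesis for suppression unconditionality in each of the three items. Since $\lceil 1\cdot m\rceil = m$, part (i) is essentially free: $1$-greedy is literally greedy, so Theorem \ref{Cgreedy}(i) gives SLC w.const $\mathbf C_{1,g}$, which Remark \ref{r1} identifies with $1$-SLC w.const $\mathbf C_{1,g}$. The suppression unconditional constant $\mathbf K_s$ does not appear in this direction.

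For part (ii), I would fix $x,A,B,\varepsilon,\delta$ as in Definition \ref{d1}, set $m=|A|$ and $y=x+1_{\varepsilon A}+1_{\delta B}$, and pick any $\Lambda\subset B$ of size $\lceil\lambda m\rceil$ (possible since $\lambda m\leqslant|B|$ and $|B|$ is an integer). Because every coefficient of $y$ on $A\cup B$ has modulus $1$ while those on $\supp(x)$ have modulus at most $1$, such a $\Lambda$ lies in $\mathcal G(y,\lceil\lambda m\rceil)$. Noting that $y-P_B(y)=P_{B^c}(y-P_\Lambda(y))$ and invoking suppression unconditionality to play the role held by $\mathbf C_\ell$ in Theorem \ref{m2}(ii), the chain
$$\|x+1_{\varepsilon A}\|\ =\ \|y-P_B(y)\|\ \leqslant\ \mathbf K_s\|y-P_\Lambda(y)\|\ \leqslant\ \mathbf K_s\mathbf C_{\lambda,g}\sigma_m(y)\ \leqslant\ \mathbf K_s\mathbf C_{\lambda,g}\|x+1_{\delta B}\|$$
closes using $\sigma_m(y)\leqslant\|y-1_{\varepsilon A}\|=\|x+1_{\delta B}\|$.

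Part (iii) is where the substantive change occurs: the truncation-operator step of Theorem \ref{m2}(iii) is replaced by an appeal to Proposition \ref{pKsKu}. Given $x$, $m$, $A\in\mathcal G(x,\lceil\lambda m\rceil)$, $B$ with $|B|=m$, and arbitrary scalars $(b_n)_{n\in B}$, I would set $\alpha=\min_{n\in A}|e_n^*(x)|$ and $\varepsilon_n=\sgn(e_n^*(x))$. The cardinality inequality $\lambda|B\setminus A|\leqslant|A\setminus B|$ follows from $|A|\geqslant\lambda|B|$ and $\lambda\geqslant 1$, so Lemma \ref{l1} applied to $\alpha^{-1}(x-P_A(x))$ (whose $\infty$-norm is at most $1$ since $A$ is greedy) with the sets $B\setminus A$ and $A\setminus B$, followed by rescaling by $\alpha$, yields
$$\|x-P_A(x)\|\ \leqslant\ \mathbf C_{\lambda,ib}\bigl\|P_{(A\cup B)^c}(x)+\alpha\,1_{\varepsilon(A\setminus B)}\bigr\|.$$
A coordinate-wise check on the four pieces $A\cap B$, $B\setminus A$, $A\setminus B$, and $(A\cup B)^c$ then confirms that the vector on the right has coefficients dominated in modulus by, and matching in sign where both are nonzero with, those of $x-\sum_{n\in B}b_ne_n$: the only nontrivial slot is $A\setminus B$, where $|\alpha\,\sgn(e_n^*(x))|=\alpha\leqslant|e_n^*(x)|$ and the signs both equal $\sgn(e_n^*(x))$. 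Proposition \ref{pKsKu} then bounds the right-hand side by $\mathbf K_s\|x-\sum_{n\in B}b_ne_n\|$, and taking the supremum over $A\in\mathcal G(x,\lceil\lambda m\rceil)$ and the infimum over $B$ and $(b_n)$ delivers the $\lambda$-greedy bound with constant $\mathbf K_s\mathbf C_{\lambda,ib}$.

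I do not anticipate a serious obstacle. The only delicate point is the coefficient bookkeeping on the slot $A\setminus B$ in part (iii), which is exactly the place where the suppression unconditional constant $\mathbf K_s$ takes over from the bounded-truncation argument (Theorem \ref{bto}) that was available in the quasi-greedy setting of Theorem \ref{m2}(iii).
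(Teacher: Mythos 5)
Your proposal is correct and follows essentially the same route as the paper's proof: item (i) via Theorem \ref{Cgreedy}, item (ii) via the vector $y=x+1_{\varepsilon A}+1_{\delta B}$ with a greedy set $\Lambda\subset B$ and suppression unconditionality in place of $\mathbf C_\ell$, and item (iii) via Lemma \ref{l1} followed by the sign-and-modulus comparison of Proposition \ref{pKsKu} on the slot $A\setminus B$. The only difference is expository (you make explicit the rescaling by $\alpha$ before invoking Lemma \ref{l1}, which the paper leaves implicit).
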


\begin{proof}
Item i) follows from Theorem \ref{Cgreedy}.

We prove ii). Assume that $\mathcal{B}$ is $\lambda$-greedy w.const $\mathbf C_{\lambda, g}$. Let $x, A, B, \varepsilon, \delta$ be chosen as in Definition \ref{d1}. Set $m = |A|$ and $y = x + 1_{\varepsilon A} + 1_{\delta B}$. Choose $\Lambda\subset B$ with $|\Lambda| = \lceil\lambda m\rceil$. Then $\Lambda\in \mathcal{G}(y, \lceil \lambda m\rceil)$. We have
$$\|x+1_{\varepsilon A}\| \ =\ \|y-P_B(y)\|\ \leqslant\ \mathbf K_s\|y-P_\Lambda(y)\|\ \leqslant\ \mathbf K_s \mathbf C_{\lambda, g}\sigma_m(y)\ \leqslant \ \mathbf K_s \mathbf C_{\lambda, g}\|x+1_{\delta B}\|.$$
Hence, $\mathcal{B}$ is $\lambda$-SLC w.const $\mathbf K_s\mathbf C_{\lambda, g}$.

We prove iii). Assume that $\mathcal{B}$ is $\lambda$-SLC w.const $\mathbf C_{\lambda, ib}$. Let $x\in X$, $m\in \mathbb{N}$, $A\in \mathcal{G}(x, \lceil\lambda m\rceil)$, and $B\subset\mathbb{N}$ with $|B| = m$. Let $(b_n)_{n\in B}\subset\mathbb{K}$ and $\alpha := \min_{A}|e_n^*(x)|$. Then $\|x-P_A(x)\|_\infty\leqslant \alpha$. By Lemma \ref{l1}, Proposition \ref{pKsKu}, and noticing that $\lambda|B\backslash A|\leqslant |A\backslash B|$, we have
\begin{align*}
    \|x-P_A(x)\|&\ \leqslant\ \mathbf C_{\lambda, ib}\left\|x-P_A(x)-P_{B\backslash A}(x) + \alpha\sum_{n\in A\backslash B}\sgn(e_n^*(x))e_n\right\|\\
    &\ =\ \mathbf C_{\lambda, ib}\left\|P_{(A\cup B)^c}(x) + \alpha\sum_{n\in A\backslash B}\sgn(e_n^*(x))e_n\right\|\\
    &\ \leqslant\ \mathbf K_s\mathbf C_{\lambda, ib}\left\|P_{(A\cup B)^c}(x) + \sum_{n\in B}(e_n^*(x)-b_n)e_n + \sum_{n\in A\backslash B}e_n^*(x)e_n\right\|\\
    &\ =\ \mathbf K_s\mathbf C_{\lambda, ib}\left\|x-\sum_{n\in B}b_ne_n\right\|.
\end{align*}
Taking the sup over all $A\in \mathcal{G}(x, \lceil\lambda m\rceil)$ and the inf over all $B\subset\mathbb{N}$ with $|B| = m$ and over all $(b_n)_{n\in B}\subset\mathbb{K}$, we see that $\mathcal{B}$ is $\lambda$-greedy w.const $\mathbf K_s\mathbf C_{\lambda, ib}$.
\end{proof}

\begin{cor}\label{c1}
Let $\mathcal B$ be suppression unconditional w.const $1$. Then $\mathcal{B}$ is $\lambda$-greedy w.const $1$ if and only if it is $\lambda$-SLC w.const $1$. 
\end{cor}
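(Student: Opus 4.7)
The plan is straightforward: Corollary \ref{c1} is a direct specialization of Theorem \ref{m1} under the extra assumption $\mathbf K_s = 1$, so no new ideas are needed. For the forward direction, I would assume $\mathcal B$ is $\lambda$-greedy w.const $1$, i.e., $\mathbf C_{\lambda, g} = 1$, and apply part (ii) of Theorem \ref{m1} to conclude that $\mathcal B$ is $\lambda$-SLC w.const $\mathbf K_s \mathbf C_{\lambda, g} = 1 \cdot 1 = 1$.

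For the reverse direction, I would assume $\mathcal B$ is $\lambda$-SLC w.const $1$, so $\mathbf C_{\lambda, ib} = 1$, and invoke part (iii) of Theorem \ref{m1} to obtain that $\mathcal B$ is $\lambda$-greedy w.const $\mathbf K_s \mathbf C_{\lambda, ib} = 1$. Both implications therefore reduce to a single line each.

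The main (and only) subtle point to watch is the case $\lambda = 1$, where part (i) of Theorem \ref{m1} gives the sharper bound $\mathbf C_{1, g}$ without the factor $\mathbf K_s$. However, since we are assuming $\mathbf K_s = 1$, the estimates in (i) and (ii) coincide at $\lambda = 1$, so the uniform application of parts (ii) and (iii) covers all $\lambda \geqslant 1$ without needing to split cases. There is no genuine obstacle; the content of the corollary is entirely contained in the quantitative statement of Theorem \ref{m1}, and the corollary is meant to highlight the clean consequence when the suppression-unconditional constant is $1$.
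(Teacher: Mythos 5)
Your proposal is correct and matches the paper exactly: the corollary is stated without a separate proof precisely because it is the immediate specialization of Theorem \ref{m1}, parts (ii) and (iii), to the case $\mathbf K_s = 1$. Your remark about part (i) at $\lambda = 1$ is a reasonable sanity check but, as you note, changes nothing.
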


\begin{exa}\normalfont
We consider the renorming of $\ell_1$ used in the proof of Theorem \ref{em2}. Clearly, the canonical basis $\mathcal{B}$ is suppression unconditional w.const $\mathbf K_s = 1$. Let $\lambda = \frac{\sup_{n}w(n)}{\inf_n w(n)}$. We show that $\mathcal{B}$ is $\lambda$-SLC w.const $1$. Choose $x, A, B, \varepsilon, \delta$ as in Definition \ref{d1}. We have
$$\|1_{\varepsilon A}\|\ =\ \sum_{n\in A}w(n) \ \leqslant\ |A|\sup_{n}w(n)\ \leqslant\ \frac{\sup_nw(n)}{\lambda}|B|\ =\ \inf_n w(n)|B|\ \leqslant\ \|1_{\delta B}\|.$$
Therefore, 
$$\|x+1_{\varepsilon A}\|\ = \|x\| + \|1_{\varepsilon A}\|\ \leqslant\ \|x\| + \|1_{\delta B}\|\ =\ \|x+1_{\delta B}\|.$$
By Corollary \ref{c1}, we know that $\mathcal{B}$ is $\lambda$-greedy w.const $1$ (though it is not almost greedy w.const $1$.)
\end{exa}

\section{On the $\lambda$-partially greedy property}
There are three main results to be proved in this section. First, we characterize $\lambda$-partially greedy bases as being quasi-greedy and $\lambda$-max conservative (Theorem \ref{m6}.) Next, we construct an unconditional basis $\mathcal{B}$ that is $\lambda$-partially greedy but is not strong partially greedy (Theorem \ref{m7}.) Finally, we prove Theorem \ref{em3} and characterize $\lambda$-partially greedy w.const $1$ bases, assuming $1$-suppression quasi-greediness (Corollary \ref{c11}.) 

\subsection{Characterization of the $\lambda$-partially greedy property}
\begin{thm}\label{m4}
Let $\mathcal B$ be suppression quasi-greedy w.const $\mathbf C_\ell$. The following hold
\begin{enumerate}
    \item[i)] If $\mathcal B$ is $1$-partially greedy w.const $\mathbf C_{1, p}$, then $\mathcal B$ is $1$-PSLC w.const $\mathbf C_{1, p}$.
    \item[ii)] If $\mathcal B$ is $\lambda$-partially greedy w.const $\mathbf C_{\lambda, p}$, then $\mathcal B$ is $\lambda$-PSLC w.const $\mathbf C_\ell\mathbf C_{\lambda, p}$.
    \item[iii)] If $\mathcal B$ is $\lambda$-PSLC w.const $\mathbf C_{\lambda, pl}$, then $\mathcal B$ is $\lambda$-partially greedy w.const $\mathbf C_\ell\mathbf C_{\lambda, pl}$.
\end{enumerate}
\end{thm}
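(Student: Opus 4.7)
The plan is to mirror the structure of the proof of Theorem~\ref{m2}, replacing the projection error $\widetilde{\sigma}_m$ by the partial-sum error $\widehat{\sigma}_m$ throughout and replacing Lemma~\ref{l1} by its analog Lemma~\ref{l10}. Part~(i) is the case $\lambda=1$, where $\lambda$-partially greedy collapses to strong partially greedy and, since $(\lambda-1)\max A+|A|$ reduces to $|A|$, the notion $\lambda$-PSLC collapses to PSLC (Definition~\ref{bblpslc}). The statement then follows from the general-constant version of the Berasategui--Bern\'a--Lassalle characterization underlying Theorem~\ref{1pg}, whose forward direction gives that strong partially greedy w.const~$C$ implies PSLC w.const~$C$.

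For (iii), I would proceed as in Theorem~\ref{m2}(iii). Given $x\in X$, $m\in\mathbb{N}_0$, and $\Lambda\in\mathcal{G}(x,\lceil\lambda m\rceil)$, let $\alpha=\min_{k\in\Lambda}|e_k^*(x)|$, pick $n^*\in\{0,1,\ldots,m\}$ minimizing $\|x-S_n(x)\|$, and set $B=\{1,\ldots,n^*\}$, $A'=B\setminus\Lambda$, $B'=\Lambda\setminus B$. Apply the equivalent formulation \eqref{e32} of $\lambda$-PSLC (Lemma~\ref{l10}) to $\alpha^{-1}(x-P_\Lambda(x))$ with these $A'$ and $B'$. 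The admissibility conditions are routine: $(\lambda-1)\max A'+|A'|\leq\lambda n^*-|B\cap\Lambda|\leq\lceil\lambda m\rceil-|B\cap\Lambda|=|B'|$, while $A'\subseteq\{1,\ldots,n^*\}$ sits entirely below both $B'$ and $\supp(x)\setminus(\Lambda\cup B)$, which are contained in $\{n^*+1,\ldots\}$. After scaling, the vector on the right-hand side of \eqref{e32} can be identified with $T_\alpha(x-P_B(x))$ by choosing signs $\varepsilon_n=\sgn(e_n^*(x))$, and Theorem~\ref{bto} then yields $\|x-P_\Lambda(x)\|\leq\mathbf{C}_\ell\,\mathbf{C}_{\lambda,pl}\|x-P_B(x)\|$. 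Taking suprema over $\Lambda$ and infima over $n^*$ completes (iii).

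The main obstacle is (ii), where a direct transcription of the proof of Theorem~\ref{m2}(ii) fails: with $y=x+1_{\varepsilon A}+1_{\delta B}$ and $m=\max A$, the $\lambda$-partially greedy hypothesis would require a greedy set of $y$ of size $\lceil\lambda\max A\rceil$ inside $B$, yet the given condition $(\lambda-1)\max A+|A|\leq|B|$ does \emph{not} guarantee $\lceil\lambda\max A\rceil\leq|B|$ when $A$ is not an initial segment of $\mathbb{N}$. My workaround is to \emph{pad} $A$ to a block: set $\tilde A=\{1,2,\ldots,\max A\}$ and consider $\tilde y=x+1_{\varepsilon A}+1_{\tilde\varepsilon(\tilde A\setminus A)}+1_{\delta B}$ for any choice of signs $\tilde\varepsilon$ on $\tilde A\setminus A$. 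Since $\supp(x)\sqcup B\subseteq\{\max A+1,\ldots\}$, one has $S_{\max A}(\tilde y)=1_{\varepsilon A}+1_{\tilde\varepsilon(\tilde A\setminus A)}$, whence $\widehat{\sigma}_{\max A}(\tilde y)\leq\|x+1_{\delta B}\|$. The key point is that the hypothesis rearranges to $|B\cup(\tilde A\setminus A)|=|B|+\max A-|A|\geq\lceil\lambda\max A\rceil$, which is precisely the room required to choose a greedy set $\Lambda$ of $\tilde y$ of size $\lceil\lambda\max A\rceil$ with $\Lambda\subseteq B\cup(\tilde A\setminus A)$, and hence disjoint from $A$. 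Applying $\lambda$-partial greediness to $\tilde y$ at $m=\max A$, and then using suppression quasi-greediness on $\tilde y-P_\Lambda(\tilde y)$ to strip off its remaining modulus-one coordinates lying in $(B\cup(\tilde A\setminus A))\setminus\Lambda$, produces $\|x+1_{\varepsilon A}\|\leq\mathbf{C}_\ell\,\mathbf{C}_{\lambda,p}\|x+1_{\delta B}\|$, as required.
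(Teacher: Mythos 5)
Your proposal is correct and follows essentially the same route as the paper: part (i) by appeal to the known $\lambda=1$ characterization, part (iii) via the equivalent formulation \eqref{e32} combined with the truncation operator bound of Theorem \ref{bto}, and part (ii) via exactly the padding device the paper uses (the paper's set $D=\{1,\dots,\max A\}\setminus A$ is your $\tilde A\setminus A$, and its auxiliary vector $y=1_D+1_{\varepsilon A}+x+1_{\delta B}$ is your $\tilde y$). The "obstacle" you identify in (ii) and your workaround are precisely what the paper's argument is built around, so there is no substantive difference.
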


\begin{proof}
Item i) follows from \cite[Theorem 4.2]{B0}.

ii) Let $x, A, B, \varepsilon, \delta$ be chosen as in Definition \ref{d10}. Let $m:=\max A$ and $D = \{1, \ldots, m\}\backslash A$. It follows that 
$$|B\cup D|\ =\ |B| + |D| \ \geqslant\ (\lambda -1)m + |A| + m - |A|\ =\ \lambda m.$$
Choose $\Lambda\subset B\cup D$ with $|\Lambda| = \lceil\lambda m\rceil$. 
Set 
$$y\ := \ 1_D + 1_{\varepsilon A} + x + 1_{\delta B}.$$
Then $\Lambda\in \mathcal{G}(y, \lceil\lambda m\rceil)$.
We have
\begin{align*}
    \|1_{\varepsilon A} + x\|\ =\ \|y- P_{B\cup D}(y)\|&\ \leqslant\ \mathbf C_\ell\|y- P_{\Lambda}(y)\|\\
    &\ \leqslant\ \mathbf C_\ell\mathbf C_{\lambda, p}\widehat{\sigma}_m(y)\ \leqslant\ \mathbf C_\ell\mathbf C_{\lambda, p}\|x+1_{\delta B}\|.
\end{align*}

iii)  Assume that $\mathcal{B}$ is $\lambda$-PSLC w.const $\mathbf C_{\lambda, pl}$. Let $x\in X$, $m\in \mathbb{N}_0$ and $A\in \mathcal G(x, \lceil\lambda m\rceil)$. Fix $k\in [0,m]$. We need to show that 
$$\|x-P_A(x)\|\ \leqslant\ \mathbf C_\ell \mathbf C_{\lambda, pl}\|x-S_k(x)\|.$$
Set $B = \{1, 2, \ldots, k\}\backslash A$, $F = A\backslash \{1, 2, \ldots, k\}$, and $\alpha:= \min_{n\in A}|e_n^*(x)|$. Clearly, $\|x-P_A(x)\|_\infty\leqslant \alpha$. Observe that 
\begin{align*}
    |B| + (\lambda-1)\max B&\ \leqslant\ k - |\{1, 2, \ldots, k\}\cap A| + (\lambda-1)k\\
    &\ \leqslant \ \lambda m - |\{1, 2, \ldots, k\}\cap A|\\
    &\ \leqslant\ |A| - |\{1, 2, \ldots, k\}\cap A|\ =\ |F|.
\end{align*}
Furthermore, 
$$ B \ < \ \supp(x-P_A(x)-P_B(x))\sqcup F.$$
By Lemma \ref{l10} and Theorem \ref{bto}, we obtain
\begin{align*}
    \|x-P_A(x)\|&\ \leqslant\ \mathbf C_{\lambda, pl}\left\|x-P_A(x)-P_B(x) + \alpha\sum_{n\in F}\sgn(e_n^*(x))e_n\right\|\\
    &\ =\ \mathbf C_{\lambda, pl}\left\|T_\alpha\left(x-P_A(x)-P_B(x) + P_F(x)\right)\right\|\\
    &\ \leqslant\ \mathbf C_\ell\mathbf C_{\lambda, pl}\|x-S_k(x)\|,
\end{align*}
as desired.
\end{proof}

\begin{cor}\label{el1}Fix $\lambda\geqslant 1$.
A basis $\mathcal{B}$ is $\lambda$-partially greedy if and only if it is quasi-greedy and $\lambda$-PSLC. 
\end{cor}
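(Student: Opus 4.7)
The plan is to reduce the corollary to Theorem \ref{m4} parts (ii) and (iii), after upgrading the hypothesis in the forward direction from ``$\lambda$-partially greedy'' to ``suppression quasi-greedy and $\lambda$-partially greedy''. The reverse direction is essentially immediate from Theorem \ref{m4}(iii), since quasi-greediness gives a constant $\mathbf C_\ell$ and $\lambda$-PSLC gives $\mathbf C_{\lambda,pl}$, yielding $\lambda$-partial greediness with constant $\mathbf C_\ell \mathbf C_{\lambda,pl}$.

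For the forward direction, suppose $\mathcal{B}$ is $\lambda$-partially greedy with constant $\mathbf C_{\lambda,p}$. The first step is to observe that $\widehat{\sigma}_m(x) \leqslant \|x - S_0(x)\| = \|x\|$ for every $x\in X$ and every $m\in\mathbb{N}_0$, because $n=0$ is always an admissible choice in the infimum defining $\widehat{\sigma}_m$. Plugging this into the defining inequality $\gamma_{\lceil \lambda m\rceil}(x) \leqslant \mathbf C_{\lambda,p}\widehat{\sigma}_m(x)$ yields
\[
\gamma_{\lceil \lambda m\rceil}(x) \ \leqslant\ \mathbf C_{\lambda,p}\|x\|, \qquad \forall x\in X,\ \forall m\in\mathbb N_0,
\]
which is exactly the hypothesis of Lemma \ref{l4}. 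Invoking Lemma \ref{l4} then gives that $\mathcal{B}$ is quasi-greedy. With quasi-greediness in hand, Theorem \ref{m4}(ii) applies and produces the $\lambda$-PSLC conclusion, completing the forward direction.

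There is no substantial obstacle here, as the heavy lifting is already done in Theorem \ref{m4}; the only subtlety is the small but crucial observation that the $\lambda$-partially greedy hypothesis alone already forces quasi-greediness via the trivial bound $\widehat{\sigma}_m(x) \leqslant \|x\|$, so that one can bootstrap into part (ii). Putting the two directions together gives the stated equivalence.
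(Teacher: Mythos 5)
Your proposal is correct and follows essentially the same route as the paper: the forward direction uses $\widehat{\sigma}_m(x)\leqslant\|x\|$ to invoke Lemma \ref{l4} for quasi-greediness and then Theorem \ref{m4}(ii) for $\lambda$-PSLC, and the reverse direction is Theorem \ref{m4}(iii). No gaps.
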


\begin{proof}
Fix $\lambda \geqslant 1$. Assume that $\mathcal{B}$ is $\lambda$-partially greedy. Then there exists $C\geqslant 1$ such that 
$$\gamma_{\lceil\lambda m\rceil}(x)\ \leqslant\ C\widehat{\sigma}_m(x), \forall x\in X, \forall m\in \mathbb{N}_0.$$
Hence, 
$$\gamma_{\lceil\lambda m\rceil}(x)\ \leqslant\ C\|x\|,\forall x\in X, \forall m\in \mathbb{N}_0.$$\
By Lemma \ref{l4}, $\mathcal{B}$ is quasi-greedy. We now use Theorem \ref{m4} to conclude that $\mathcal{B}$ is $\lambda$-PSLC.

Now assume that $\mathcal{B}$ is quasi-greedy and $\lambda$-PSLC. Again by Theorem \ref{m4}, $\mathcal{B}$ is $\lambda$-partially greedy.
\end{proof}

We will use the next lemma in the proof of Theorem \ref{m6}. 

\begin{lem}\label{l40}
If $\mathcal{B}$ is quasi-greedy, then $\mathcal{B}$ is $\lambda$-PSLC if and only if it is $\lambda$-max conservative.
\end{lem}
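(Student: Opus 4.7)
The equivalence splits into two implications. The forward implication, $\lambda$-PSLC $\Rightarrow$ $\lambda$-max conservative, requires no quasi-greediness: given $A,B\in\mathbb{N}^{<\infty}$ with $A<B$ and $(\lambda-1)\max A+|A|\leqslant|B|$, I plan to substitute $x=0$ and $\varepsilon=\delta=\mathbf{1}$ into the definition of $\lambda$-PSLC. Since $\supp(0)=\emptyset$, the separation condition $A<\supp(x)\sqcup B$ collapses to $A<B$, and the inequality becomes $\|1_A\|\leqslant \mathbf C_{\lambda,pl}\|1_B\|$, yielding $\Delta_\lambda\leqslant \mathbf C_{\lambda,pl}$.

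For the converse, assume $\mathcal B$ is suppression quasi-greedy w.const $\mathbf C_\ell$ and $\lambda$-max conservative w.const $\Delta_\lambda$; fix $x,A,B,\varepsilon,\delta$ as in Definition \ref{d10}. My plan is to bound $\|x+1_{\varepsilon A}\|\leqslant \|x\|+\|1_{\varepsilon A}\|$ by the triangle inequality and control each summand by a constant multiple of $\|x+1_{\delta B}\|$. For the first summand, $\|x\|_\infty\leqslant 1$ and $\supp(x)\cap B=\emptyset$ ensure that $|e_n^*(x+1_{\delta B})|=1$ on $B$ and $\leqslant 1$ off $B$, so $B$ is a greedy set of $x+1_{\delta B}$; suppression quasi-greediness then yields $\|x\|=\|(x+1_{\delta B})-P_B(x+1_{\delta B})\|\leqslant \mathbf C_\ell \|x+1_{\delta B}\|$. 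For the second summand, I plan to invoke the standard sign-equivalence $\|1_{\eta C}\|\leqslant M\|1_C\|$ for quasi-greedy bases (with $M$ depending only on $\mathbf C_\ell$ and the scalar field; one proves it by partitioning $C$ according to the sign of $\eta$ and noting that every subset of the support of a constant-coefficient vector is a greedy set), then chain with $\lambda$-max conservative (applicable since $A<B$ follows from the separation hypothesis), apply sign-equivalence once more for $\|1_B\|\leqslant M\|1_{\delta B}\|$, and use the triangle bound $\|1_{\delta B}\|\leqslant\|x+1_{\delta B}\|+\|x\|\leqslant(1+\mathbf C_\ell)\|x+1_{\delta B}\|$:
$$\|1_{\varepsilon A}\|\ \leqslant\ M\|1_A\|\ \leqslant\ M\Delta_\lambda\|1_B\|\ \leqslant\ M^2\Delta_\lambda\|1_{\delta B}\|\ \leqslant\ M^2\Delta_\lambda(1+\mathbf C_\ell)\|x+1_{\delta B}\|.$$
Adding the two bounds gives the $\lambda$-PSLC inequality with an explicit constant.

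The only non-routine ingredient is the sign-equivalence $\|1_{\eta C}\|\asymp \|1_C\|$ for quasi-greedy bases; this is classical and is the single step that will require brief justification in the writeup. Everything else reduces to the triangle inequality and direct applications of the two hypotheses, so I expect the proof to be short.
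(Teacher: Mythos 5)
Your proposal is correct and follows essentially the same route as the paper: the forward direction by setting $x=0$ in Definition \ref{d10}, and the converse by the triangle inequality, bounding $\|x\|\leqslant \mathbf C_\ell\|x+1_{\delta B}\|$ since $B$ is a greedy set of $x+1_{\delta B}$, and bounding $\|1_{\varepsilon A}\|$ via max-conservativity together with $\|1_{\delta B}\|\leqslant(1+\mathbf C_\ell)\|x+1_{\delta B}\|$. The only difference is that you explicitly insert the classical sign-equivalence $\|1_{\eta C}\|\asymp\|1_C\|$ for quasi-greedy bases to pass between signed and unsigned indicator sums, a step the paper's proof applies implicitly when it invokes $\lambda$-max conservativity directly on $\|1_{\varepsilon A}\|\leqslant\Delta_\lambda\|1_{\delta B}\|$; your version is the more careful one.
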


\begin{proof}
By setting $x = 0$ in Definition \ref{d10}, we see that $\lambda$-PSLC implies $\lambda$-max conservative. Assume that $\mathcal{B}$ is $\lambda$-max conservative w.const $\Delta_\lambda$ and is suppression quasi-greedy w.const $\mathbf C_\ell$. Let $x, A, B, \varepsilon, \delta$ be chosen as in Definition \ref{d10}. We have
\begin{align*}
\|x+1_{\varepsilon A}\|\ \leqslant\ \|x\| + \|1_{\varepsilon A}\|&\ \leqslant\ \mathbf C_\ell \|x+1_{\delta B}\| + \Delta_{\lambda}\|1_{\delta B}\|\\
&\ \leqslant\ \mathbf C_\ell\|x+1_{\delta B}\| + \Delta_{\lambda}(1+\mathbf C_\ell)\|x+1_{\delta B}\|.
\end{align*}
The first inequality is due to the triangle inequality, while the second and third inequalities is due to the fact that $B$ is a greedy set of $x + 1_{\delta B}$ and due to $\lambda$-max conservative. Hence, 
$$\|x+1_{\varepsilon A}\| \ \leqslant\ (\mathbf C_\ell + \Delta_\lambda + \Delta_\lambda\mathbf C_\ell)\|x+1_{\delta B}\|.$$
This completes our proof.
\end{proof}

\begin{proof}[Proof of Theorem \ref{m6}]
If $\mathcal{B}$ is $\lambda$-partially greedy, then $\mathcal{B}$ is quasi-greedy and $\lambda$-PSLC by Corollary \ref{el1}. According to Lemma \ref{l40}, $\mathcal{B}$ is quasi-greedy and $\lambda$-max conservative. 

Conversely, if $\mathcal{B}$ is quasi-greedy and $\lambda$-max conservative, then Lemma \ref{l40} states that $\mathcal{B}$ is $\lambda$-PSLC. Hence, by Corollary \ref{el1}, $\mathcal{B}$ is $\lambda$-partially greedy. 
\end{proof}

\subsection{For $\lambda > 1$, $\lambda$-partially greedy is strictly weaker than strong partially greedy}
The goal of this subsection is to prove Theorem \ref{m7}.

\begin{defi}\label{d50}\normalfont
A basis $\mathcal{B}$ is $\lambda$-conservative w.const $C$ if for all $A, B\in\mathbb{N}^{<\infty}$ with $\lambda |A| \leqslant |B|$ and $A < B$, we have
$$\|1_A\|\ \leqslant\ C\|1_B\|.$$
The least constant $C$ is denoted by $\mathbf C_{\lambda, c}$. When $\lambda = 1$, we say that our basis is conservative and set $\mathbf C_{c} := \mathbf C_{1,c}$.
\end{defi}

\begin{rek}\label{r2}\normalfont
Note that by definition, $1$-max conservative is precisely conservative. Hence, by Theorem \ref{m6}, a basis is strong partially greedy if and only if it is quasi-greedy and conservative. (See also \cite[Theorem 4.2]{B0}.)
\end{rek}

\begin{proof}[Proof of Theorem \ref{m7}] Choose $\lambda > 1$.

i) Assume that $\mathcal{B}$ is strong partially greedy; that is, there exists $C\geqslant 1$ such that 
    $$\gamma_m(x)\ \leqslant\ C\widehat{\sigma}_m(x)\ \leqslant\ C\|x\|, \forall x\in X, \forall m\in \mathbb{N}_0.$$
    Hence, $\mathcal{B}$ is suppression quasi-greedy w.const $C$. Therefore, 
    $$\gamma_{\lceil\lambda m\rceil}(x) \ \leqslant\ C\gamma_m(x)\ \leqslant\ C^2\widehat{\sigma}_m(x),\forall x\in X, \forall m\in \mathbb{N}_0.$$
    We conclude that $\mathcal{B}$ is $\lambda$-partially greedy.

ii) For each $\lambda > 1$, we now construct an unconditional basis $\mathcal{B}$ that is $\lambda$-partially greedy but is not strong partially greedy. 

Step 1: Our construction is a modification of the Schreier space. Let $\mathcal{F} = \{F\subset\mathbb{N}\,:\, |F|\geqslant 1\mbox{ and }\min F\geqslant |F|\}$ and $D = \{2^n\,:\, n\in \mathbb{N}_0\}$. Define weight sequences $\omega^F = (w^F_n)_{n=1}^\infty$ that depend on $F\subset\mathbb{N}$
$$w^F_n\ :=\ \begin{cases}\frac{1}{\sqrt{n}}&\mbox{ if }F\subset D,\\\frac{1}{n}&\mbox{ otherwise}.\end{cases}$$
Let $X$ be the completion of $c_{00}$ with respect to the following norm: for $x = (x_1, x_2, \ldots)$, 
$$\|x\|\ :=\ \sup\left\{\sum_{n\in F}w^F_{\sigma(n)}|x_n|\,:\, F\in \mathcal{F}\mbox{ and } \sigma: F\rightarrow [1, |F|]\mbox{ is a bijection}\right\}.$$
Clearly, the canonical basis $\mathcal{B}$ is a unconditional w.const $\mathbf K_u = 1$ and normalized. 

Step 2: We show that $\mathcal{B}$ is not conservative and thus, not strong partially greedy by Remark \ref{r2}. Fix $k\in \mathbb{N}$. Choose $N$ sufficiently large such that 
$A:=\{2^{N+1}, \ldots, 2^{N+k}\}$ is in $\mathcal{F}$ and $B:= \{2^{N+k}+1, \ldots, 2^{N+k}+k\}\cap D = \emptyset$. Then 
$\|1_A\|\sim \sqrt{k}$ and $\|1_B\|\sim \ln k$. Hence, $\mathcal{B}$ is not conservative. 

Step 3: We prove that $\mathcal{B}$ is $\lambda$-max conservative. Let $A, B\in\mathbb{N}^{<\infty}$ with $A<B$ and $|A| + (\lambda - 1)\max A\leqslant |B|$. We need to  show that $\|1_A\|\ \lesssim\ \|1_B\|$. Let $F\subset A$, $F\neq \emptyset$, and $F\in \mathcal{F}$. 

If $F\not\subset D$, then for all $\sigma: F\rightarrow [1, |F|]$, 
$$\sum_{n\in F}w^F_{\sigma(n)}\ =\ \sum_{n=1}^{|F|}\frac{1}{n}\ \leqslant\ \ln(|F|) + 1.$$
Choose $B'\subset B$ such that $B'\in \mathcal{F}, B'\neq \emptyset$, and $|B'|\geqslant |B|/2$. We have 
$$\|1_B\|\ \geqslant\ \sum_{n=1}^{|B'|}\frac{1}{n}\ \geqslant\ \ln(|B'|)\ \geqslant\ \ln(|B|)-\ln(2).$$
Using $\|1_B\|\geqslant 1$, we obtain
$$\sum_{n\in F}w^F_{\sigma(n)}\ \leqslant\ \ln(|F|) + 1\ \leqslant\ \ln(|B|) + 1 \ \leqslant\ \|1_B\| + 1 + \ln(2)\ \leqslant\ (2+\ln(2))\|1_B\|.$$

Assume that $F\subset D$. Write $$F \ =\  \{2^{n_1}, 2^{n_2}, \ldots, 2^{n_j}\}.$$
Then $\|1_F\|\sim \sqrt{j}$. Choose $B'\subset B$ such that $B'\in \mathcal{F}$ and $|B'|\geqslant |B|/2$. We have
\begin{align}\label{e61}\|1_B\|\ \geqslant\ \|1_{B'}\|\ \geqslant\ \ln |B'|&\ \geqslant\ \ln|B|-\ln 2\nonumber\\
&\ \geqslant\ \ln((\lambda-1)2^{n_j})-\ln(2)\nonumber\\
&\ =\ n_j\ln 2+\ln((\lambda-1)/2)\nonumber\\
&\ \geqslant\ j\ln 2 + \ln((\lambda-1)/2)\nonumber\\
&\ \gtrsim\ \|1_F\|\ln 2 + \ln((\lambda-1)/2).
\end{align}

Case 1: $\frac{1}{2}\|1_F\| + \ln((\lambda-1)/2)\geqslant 0$. Then \eqref{e61} implies that
$$\|1_B\|\ \gtrsim\ (\ln2 - 0.5)\|1_F\|.$$

Case 2: $\frac{1}{2}\|1_F\| + \ln((\lambda-1)/2) < 0$. Then 
$$\|1_F\| \ <\ 2\ln(2/(\lambda-1))\ \leqslant\ 2\ln(2/(\lambda-1))\|1_B\|.$$

Since $F$ is arbitrary, it holds that $\|1_A\|\lesssim \|1_B\|$ and so, $\mathcal{B}$ is $\lambda$-max conservative, as desired. Since $\mathcal{B}$ is unconditional, we conclude that $\mathcal{B}$ is $\lambda$-partially greedy by Theorem \ref{m6}. However, $\mathcal{B}$ is not strong partially greedy by Remark \ref{r2}. 
\end{proof}

\subsection{$\lambda$-partially greedy bases w.const $1$}

\begin{proof}[Proof of Theorem \ref{em3}]
We consider a modification of the Schreier space as in  \cite[Proposition 6.10]{BDKOW} and a renorming using bounded weights. Let $X$ be the completion of $c_{00}$ under the following norm
$$\|(x_1, x_2, x_3, \ldots)\|_{X}\ =\ \sup_{F\in \mathcal{F}}\sum_{n\in F}|x_n|,$$
where $\mathcal{F} = \{F\subset \mathbb{N}: \sqrt{\min F}\geqslant |F|\}$. 
Clearly, the canonical basis $\mathcal{B}$ is unconditional w.const $\mathbf{K}_u = 1$ and conservative w.const $1$. It is easy to check that $\mathcal{B}$ is PSLC w.const $1$; by Theorem \ref{1pg}, $\mathcal{B}$ is strong partially greedy w.const $1$. 

We now renorm $X$ as follows: choose a fixed weight sequence $\omega = (w(n))_{n=1}^\infty$ satisfying
\begin{itemize}
    \item $0 < w(n) \leqslant 1$ for all $n\in \mathbb{N}$, 
    \item $|\{n: w(n) < 1\}| \leqslant N$ for some $N\in \mathbb{N}$, and 
    \item $(i-j)(w(i)-w(j))< 0$ for some $i<j$. 
\end{itemize}
Define
$$\|(x_1, x_2, x_3, \ldots)\|_\omega\ :=\ \sup_{F\in \mathcal{F}}\sum_{n\in F}w(n)|x_n|.$$
Under $\|\cdot\|_\omega$, the basis $\mathcal{B}$ is not PSLC w.const $1$ because
$$\|e_i\|_\omega \ =\ w(i) \ >\ w(j)\ =\ \|e_j\|_\omega.$$
However, it holds that
$$\gamma_{(N+1) m}(x)\ \leqslant\ \widehat{\sigma}_m(x), \forall x\in X, \forall m\in \mathbb{N}_0.$$
To see this, pick $x = (x_1, x_2, x_3, \ldots)\in X$, $m\in \mathbb{N}$, $k\leqslant m$, and $A\in \mathcal{G}(x,  (N+1)m)$. We need to show that
\begin{equation}\label{e40}\|x-P_A(x)\|_\omega\ \leqslant\ \|x-S_k(x)\|_\omega.\end{equation}
Let $B = \{1, 2, \ldots, k\}\backslash A$, $F = A\backslash \{1, 2, \ldots, k\}$, and $t := \min_{n\in A}|e_n^*(x)|$.

Step 1: We prove the following claim
\begin{equation}\label{e43}\|P_{(A\cup B)^c}(x)+t1_{ B}\|_\omega\ \leqslant\ \left\|P_{(A\cup B)^c}(x) + t1_{ F}\right\|_\omega.\end{equation}
\begin{proof}
Letting $s:=|\{1, 2,\ldots, k\}\cap A|$, we have 
\begin{equation}\label{e41}|B| + N\ =\ k-s + N \ \leqslant\ (N+1)m - s\ =\  |A|-s\ =\ |F|.\end{equation}
Since $|\{n: w(n)<1\}| \leqslant N$, \eqref{e41} implies that $|\{n\in F: w(n) = 1\}|\ \geqslant\ |B|$. Let $D\in \mathcal{F}$ and choose $E\subset \{n\in F: w(n)=1\}$ such that $|E| = |D\cap B|$. Write 
\begin{align*}
    y&\ :=\ P_{(A\cup B)^c}(x)+t1_{ B} \ =:\ (y_1, y_2, \ldots)\\
    z&\ :=\ P_{(A\cup B)^c}(x)+t1_{ F}\ =:\ (z_1, z_2, \ldots). 
\end{align*}
Clearly, $G := (D\cap (A\cup B)^c)\cup E\in \mathcal{F}$ because $|G| \leqslant |D|$ and $\min G\geqslant \min D$. We obtain
\begin{align*}\sum_{n\in D}w(n)|y_n|&\ =\ \sum_{n\in D\cap (A\cup B)^c}w(n)|x_n| + t\sum_{n\in D\cap B}w(n)\\
&\ \leqslant\ \sum_{n\in D\cap (A\cup B)^c}w(n)|x_n| + t\sum_{n\in E}1\\
&\ =\ \sum_{n\in G}w(n) |z_n|\ \leqslant\ \|z\|_\omega.
\end{align*}
Since $D\in \mathcal{F}$ is arbitrary, we conclude that $\|y\|_\omega\leqslant \|z\|_\omega$. 
\end{proof}

Step 2: We now use the claim to prove \eqref{e40}.
We have
\begin{align*}
    \|x-P_A(x)\|_\omega&\ =\ \left\|x-P_A(x)-P_B(x)+\sum_{n\in B}e_n^*(x)e_n\right\|_\omega\\
    &\ \leqslant\  \left\|P_{(A\cup B)^c}(x)+t1_{B}\right\|_\omega\\
    &\ \leqslant\ \left\|P_{(A\cup B)^c}(x) + t1_F\right\|_\omega\\
    &\ \leqslant\ \left\|P_{(A\cup B)^c}(x) + P_F(x)\right\|_\omega\ =\ \left\|x-S_k(x)\right\|_\omega,
\end{align*}
as desired. 
\end{proof}

\begin{rek}\normalfont
The above modification of the Schreier space indicates that though a basis may not be $1$-strong partially greedy, the greedy algorithm is still able to give an error smaller than the error from the $m$th partial sum if we allow greedy sums to be of size greater than $m$ by a constant factor. 
\end{rek}

Note that Theorem \ref{m4} tells us when a basis is $\lambda$-partially greedy w.const $1$.
\begin{cor}\label{c11}
Let $\mathcal B$ be suppression quasi-greedy basis w.const $1$. Then $\mathcal{B}$ is $\lambda$-partially greedy w.const $1$ if and only if it is $\lambda$-PSLC w.const $1$. 
\end{cor}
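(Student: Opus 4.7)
The plan is to observe that Corollary \ref{c11} is essentially a direct specialization of Theorem \ref{m4}, parts (ii) and (iii), to the setting where the suppression quasi-greedy constant $\mathbf C_\ell$ equals $1$. Both parts of that theorem already provide quantitative bounds, and substituting the hypothesis $\mathbf C_\ell = 1$ makes the two implications collapse into the claimed equivalence with constant~$1$.

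For the forward direction, suppose $\mathcal B$ is $\lambda$-partially greedy w.const $1$, so $\mathbf C_{\lambda, p} = 1$. Since $\mathcal B$ is suppression quasi-greedy w.const $1$, Theorem \ref{m4}(ii) yields that $\mathcal B$ is $\lambda$-PSLC w.const $\mathbf C_\ell \mathbf C_{\lambda, p} = 1 \cdot 1 = 1$, as required.

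For the converse, suppose $\mathcal B$ is $\lambda$-PSLC w.const $1$, so $\mathbf C_{\lambda, pl} = 1$. Applying Theorem \ref{m4}(iii) with $\mathbf C_\ell = 1$ gives $\mathcal B$ is $\lambda$-partially greedy w.const $\mathbf C_\ell \mathbf C_{\lambda, pl} = 1 \cdot 1 = 1$.

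Since the two constants in Theorem \ref{m4}(ii)(iii) are multiplicative in $\mathbf C_\ell$, there is no obstacle here: the entire content of the corollary is that these multiplicative bounds are sharp enough to preserve the constant $1$ when the assumed suppression quasi-greedy constant is itself $1$. The proof is therefore a one-line invocation of Theorem \ref{m4} in each direction.
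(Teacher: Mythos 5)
Your proof is correct and matches the paper exactly: the corollary is stated as an immediate consequence of Theorem \ref{m4}, parts (ii) and (iii), specialized to $\mathbf C_\ell = 1$, which is precisely your argument.
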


\begin{exa}\normalfont
We consider the modification of the Schreier space in the proof of Theorem \ref{em3}. The canonical basis $\mathcal{B}$ is suppression unconditional w.const $1$ and thus, is suppression quasi-greedy w.const $1$. With the same notation used in the proof, we now show that $\mathcal{B}$ is $(N+1)$-PSLC w.const $1$. Choose $x\in X$ with $\|x\|_\infty\leqslant 1$, sets $A, B\in\mathbb{N}^{<\infty}$ with $N\max A + |A|\leqslant |B|$ and $ A<\supp(x)\sqcup B$, and signs $\varepsilon, \delta$. We need to show 
\begin{equation}\label{e42}\|x+1_{\varepsilon A}\|_{\omega}\ \leqslant\ \|x+1_{\delta B}\|_\omega.\end{equation}
Since $|B|\geqslant |A| + N\max A\geqslant |A| + N$, we know that 
$|\{n\in B: w(n) = 1\}| \geqslant |A|$. Using the exact argument as in the proof of \eqref{e43} gives \eqref{e42}. By Corollary \ref{c11}, $\mathcal{B}$ is $(N+1)$-partially greedy w.const $1$. 
\end{exa}

\section{Characterizations of strong (reverse) partially greedy bases}
This section aims at providing new characterizations of strong (reverse) partially greedy bases by enlarging greedy sums in a characterization due to Dilworth and Khurana \cite{DK}.

\subsection{Characterizations of strong partially greedy bases}

\begin{defi}\label{d62}\normalfont
A basis $\mathcal B$ is said to be $\lambda$-left-skewed for largest coefficients (LSLC) if there exists a constant $C\geqslant 1$ such that
\begin{equation}\label{e81}\|x+1_{\varepsilon A}\|\ \leqslant\ C\|x+1_{\delta B}\|,\end{equation}
for all $x\in X$ with $\|x\|_\infty\leqslant 1$, for all $A, B\in\mathbb{N}^{<\infty}$ with $A < B$, $\lambda |A|\leqslant |B|$, and $A\sqcup B\sqcup x$, and for all signs $\varepsilon, \delta$. The least constant $C$ verifying \eqref{e81} is denoted by $\mathbf C_{\lambda, lib}$.
\end{defi}

For each $x\in X$, a greedy ordering of $x$ is an injective map $\rho = \rho_x: \mathbb{N}\rightarrow \mathbb{N}$ such that $\supp(x)\subset\rho(x)$ and if $j < k$, then $|e^*_{\rho(j)}(x)|\geqslant |e^*_{\rho(k)}(x)|$. Clearly, $\rho_x$ may not be unique. Let $\mathcal{U}_X = (\rho_x)_{x\in X}$ denote a collection of greedy orderings of $X$. Given a greedy collection $\mathcal{U}_{X}$, we define
$${\sigma}^{\mathcal{U}_X, L}_m(x)\ :=\ \inf\left\{\|x-P_{A}(x)\|\,:\, |A|\leqslant m, \max A < \rho_x(1)\right\}.$$

\begin{defi}\normalfont
A basis $\mathcal{B}$ is said to be $\lambda$-DK partially greedy if there exists $C\geqslant 1$ such that 
$$\left\|x-\sum_{n=1}^{\lceil \lambda m\rceil} e^*_{\rho_x(n)}(x)e_{\rho_x(n)}\right\|\ \leqslant\ C\sigma^{\mathcal{U}_X, L}_m(x), \forall x\in X, \forall m\in \mathbb{N}_0, \forall \mathcal{U}_{X} = (\rho_x)_{x\in X}.$$
The least $C$ is denoted by $\mathbf{C}_{\lambda, dkp}$. Note that when $\lambda = 1$, we retrieve the definition of  partially greedy bases given by Dilworth and Khurana \cite{DK}.
\end{defi}

We gather characterizations of strong partially greedy bases with our introduction of $\lambda$-conservative, $\lambda$-LSLC, and $\lambda$-DK partially greedy. Previous characterizations can be found in \cite[Theorem 2.3]{DK} and \cite[Theorem 3.2]{K}.

\begin{thm}\label{m5}
Let $\mathcal B$ be a basis. The following are equivalent
\begin{enumerate}
    \item[i)] $\mathcal{B}$ is strong partially greedy.
    \item[ii)] $\mathcal{B}$ is quasi-greedy and conservative.
    \item[iii)] $\mathcal{B}$ is quasi-greedy and PSLC. 
    \item[iv)] $\mathcal{B}$ is quasi-greedy and $\lambda$-conservative for all (some) $\lambda\geqslant 1$. 
    \item[v)] $\mathcal{B}$ is quasi-greedy and is $\lambda$-LSLC for all (some) $\lambda\geqslant 1$.
    \item[vi)] $\mathcal{B}$ is $\lambda$-DK partially greedy for all (some) $\lambda\geqslant 1$. 
\end{enumerate}
\end{thm}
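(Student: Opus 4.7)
The plan is to establish the six-way equivalence by first recycling previously proved results for $(\mathrm{i})\iff(\mathrm{ii})\iff(\mathrm{iii})$, then inserting $(\mathrm{iv})$ and $(\mathrm{v})$ via partitioning and triangle-inequality arguments modeled on Lemma~\ref{l2} and Lemma~\ref{l40}, and finally addressing $(\mathrm{vi})$ by lifting the Dilworth--Khurana $\lambda=1$ characterization to general $\lambda$ using quasi-greediness. Concretely, Theorem~\ref{m6} at $\lambda=1$ together with Remark~\ref{r2} gives $(\mathrm{i})\iff(\mathrm{ii})$, and Lemma~\ref{l40} at $\lambda=1$ converts $1$-max conservative into PSLC, producing $(\mathrm{ii})\iff(\mathrm{iii})$.

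For $(\mathrm{ii})\iff(\mathrm{iv})$, the direction conservative $\Rightarrow$ $\lambda$-conservative is immediate for every $\lambda\geqslant 1$, since enlarging $\lambda$ only weakens the hypothesis $\lambda|A|\leqslant|B|$. The converse ``for some $\lambda_0$ and quasi-greedy $\Rightarrow$ conservative'' follows the template of Lemma~\ref{l2}: given $A<B$ with $|A|=|B|$, partition $A$ into $\lceil 2\lambda_0\rceil$ subsets $A_1,\dots,A_v$ each of size at most $|B|/\lambda_0$; since the relation $A_n<B$ is inherited, $\lambda_0$-conservative applies to each piece and the triangle inequality sums them up, the small-$|A|$ case being dispatched by M-boundedness. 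For $(\mathrm{iv})\iff(\mathrm{v})$, the argument mimics Lemma~\ref{l40}: setting $x=0$ in $\lambda$-LSLC yields $\lambda$-conservative, while the converse combines the triangle inequality, the bound $\|x\|\leqslant\mathbf{C}_\ell\|x+1_{\delta B}\|$ (since $B$ is a greedy set of $x+1_{\delta B}$), $\lambda$-conservativity applied to $\|1_{\varepsilon A}\|$ (using the folklore signed/unsigned-indicator equivalence for quasi-greedy bases), and suppression quasi-greediness once more to replace $\|1_{\delta B}\|$ by $\|x+1_{\delta B}\|$.

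The main obstacle is $(\mathrm{i})\iff(\mathrm{vi})$. For $\lambda=1$ this is exactly \cite[Theorem~2.3]{DK}. To boost $(\mathrm{i})\Rightarrow(\mathrm{vi})$ to every $\lambda>1$, write the residual at order $\lceil\lambda m\rceil$ as the residual $r$ at order $m$ minus a greedy sum of $r$ of order $\lceil\lambda m\rceil-m$ taken under the greedy ordering of $r$ induced by $\rho_x$; quasi-greediness then inflates the $\lambda=1$ bound by at most $\mathbf{C}_\ell$. For $(\mathrm{vi})$-for-some-$\lambda_0\Rightarrow(\mathrm{i})$, the trivial bound $\sigma_m^{\mathcal{U}_X,L}(x)\leqslant\|x\|$ together with Lemma~\ref{l4} yields quasi-greediness. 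To extract $\lambda_0$-conservative (hence conservative by $(\mathrm{iv})\iff(\mathrm{ii})$), I would test $(\mathrm{vi})$ on $x=1_A+1_B$ with $\lambda_0|A|\leqslant|B|$ and $A<B$: pick the greedy ordering $\rho_x$ so that $\rho_x(1)=\min B$, which is admissible because all coefficients of $x$ tie, making $A$ an admissible set giving $\sigma_{|A|}^{\mathcal{U}_X,L}(x)\leqslant\|1_B\|$; pick the greedy operator $G_{\lceil\lambda_0|A|\rceil}$ to select its $\lceil\lambda_0|A|\rceil\leqslant|B|$ indices entirely inside $B$, which is also legal by the coefficient ties, so that the residual equals $1_A+1_{B\setminus\Lambda}$; then $(\mathrm{vi})$ combined with one more application of suppression quasi-greediness isolates $\|1_A\|\lesssim\|1_B\|$.

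The delicate points are twofold: verifying that all the choices of greedy ordering and greedy operator in the $(\mathrm{vi})$ arguments are simultaneously admissible on tied vectors (this is where the ``for all $\mathcal{U}_X$'' clause in the definition of $\lambda$-DK partially greedy is essential); and, in the $(\mathrm{i})\Rightarrow(\mathrm{vi})$ boost, confirming that the greedy ordering of the residual $r$ inherited from $\rho_x$ is compatible with a genuine greedy ordering of $r$ so that the $\lambda=1$ hypothesis applies to $r$ itself.
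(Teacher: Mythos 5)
Your proposal is correct and, for the bulk of the theorem, follows the same route as the paper: (i)$\iff$(ii)$\iff$(iii) is recycled from earlier results; (ii)$\iff$(iv) is the partitioning argument (with the observation, also made in the paper, that quasi-greediness is not actually needed here because conservativity already allows $|A|\leqslant|B|$); (i)$\Rightarrow$(vi) is the same $\mathbf C_\ell$-inflation of the Dilworth--Khurana $\lambda=1$ bound, using that $\{\rho_x(m+1),\dots,\rho_x(\lceil\lambda m\rceil)\}$ is a greedy set of the order-$m$ residual; and (vi)$\Rightarrow$(ii) is the same test vector $1_A+1_B$ with a greedy ordering that enumerates a subset of $B$ of size $\lceil\lambda m\rceil$ first, followed by one application of suppression quasi-greediness. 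The one place where you genuinely diverge is item (v): the paper closes that loop asymmetrically, getting (i)$\Rightarrow$(v) by citing Khurana's result that strong partially greedy bases are LSLC and (v)$\Rightarrow$(ii) by setting $x=0$ and invoking Lemma~\ref{l50}, whereas you prove (iv)$\iff$(v) directly by a Lemma~\ref{l40}-style triangle-inequality argument. Your route is more self-contained (no external citation for this step) but pays for it by needing the signed/unsigned indicator comparison $\|1_{\varepsilon A}\|\lesssim\|1_A\|$ for quasi-greedy bases, which you correctly flag as a standard fact but which should be cited or proved if this were written out in full; the paper avoids that issue entirely at this step. Both arguments are valid.
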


\begin{lem}\label{l50}
A basis $\mathcal{B}$ is conservative if and only if $\mathcal{B}$ is ($\lambda$, conservative) for all (some) $\lambda \geqslant 1$.
\end{lem}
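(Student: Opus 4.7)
The plan is to mirror the structure of the proof of Lemma \ref{l2}, with the added constraint $A < B$ requiring only a mild modification. Throughout, I will treat the quasi-greediness hypothesis as in effect (as it is in the context of Theorem \ref{m5} where both conditions (ii) and (iv) already include quasi-greediness); this is needed for the forward direction to compare $\|1_D\|$ to $\|1_B\|$ when $D \subset B$. It suffices to fix $\lambda \geqslant 1$ and prove that conservative is equivalent to $\lambda$-conservative.

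For the forward implication, suppose $\mathcal{B}$ is conservative w.const $\mathbf{C}_c$ and let $A, B \in \mathbb{N}^{<\infty}$ with $A < B$ and $\lambda|A| \leqslant |B|$. Pick any $D \subset B$ with $|D| = |A|$. Since $\max A < \min B \leqslant \min D$, we automatically have $A < D$, so conservativity gives $\|1_A\| \leqslant \mathbf{C}_c \|1_D\|$. Now $B \setminus D$ is a greedy set of $1_B$ (all nonzero coordinates of $1_B$ equal $1$), so suppression quasi-greediness with constant $\mathbf{C}_\ell$ yields $\|1_D\| = \|1_B - P_{B \setminus D}(1_B)\| \leqslant \mathbf{C}_\ell \|1_B\|$. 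Combining, $\|1_A\| \leqslant \mathbf{C}_c \mathbf{C}_\ell \|1_B\|$.

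For the converse, suppose $\mathcal{B}$ is $\lambda$-conservative w.const $\mathbf{C}_{\lambda,c}$, and let $A, B \in \mathbb{N}^{<\infty}$ with $A < B$ and $m := |A| = |B|$. I would split into cases exactly as in Lemma \ref{l2}. If $m \geqslant 2\lambda$, partition $A$ into $v := \lceil 2\lambda \rceil$ consecutive blocks $A_1 < A_2 < \cdots < A_v$, each of size
\[ |A_j| \ \leqslant\ \frac{m}{2\lambda} + 1 \ \leqslant\ \frac{m}{\lambda} \ =\ \frac{|B|}{\lambda}. \]
Each $A_j \subset A$ satisfies $A_j < B$, so $\lambda$-conservativity gives $\|1_{A_j}\| \leqslant \mathbf{C}_{\lambda,c} \|1_B\|$, and the triangle inequality yields $\|1_A\| \leqslant \lceil 2\lambda\rceil \mathbf{C}_{\lambda,c} \|1_B\|$. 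If instead $m < 2\lambda$, then M-boundedness gives $\|1_A\| \leqslant 2\lambda \sup_n \|e_n\|$ and, by picking any $j \in B$, $\|1_B\| \geqslant 1 / \sup_n \|e_n^*\|_*$, so $\|1_A\| \leqslant 2\lambda \sup_n \|e_n\| \sup_n \|e_n^*\|_* \|1_B\|$.

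There is no substantive obstacle: the key observation that makes this argument go through just as in Lemma \ref{l2} is that the order condition $A < B$ is preserved under both operations used, namely extracting a subset $D \subset B$ (then $A < D$) and partitioning $A$ into consecutive blocks (then each $A_j < B$). The only point where the conservative setting differs from the democratic setting is the need for suppression quasi-greediness (rather than nothing) in the forward direction, to pass from a subset $D$ of $B$ to $B$ itself.
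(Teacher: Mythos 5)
Your converse direction ($\lambda$-conservative implies conservative) is essentially the argument the paper intends: it declares the proof ``similar to the proof of Lemma \ref{l2}'', and your block decomposition, with the observation that $A_j<B$ is preserved, is exactly that. (One cosmetic point: Definition \ref{d50} with $\lambda=1$ quantifies over all $A<B$ with $|A|\leqslant|B|$, not $|A|=|B|$; your argument goes through verbatim with $|B|\geqslant m$ in place of $|B|=m$.)

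The genuine gap is in the forward direction, where you import a quasi-greediness hypothesis that is not in the statement. The paper explicitly stresses, in the paragraph immediately after Lemma \ref{l50}, that quasi-greediness is \emph{not} needed here --- that is the entire reason this lemma is stated separately from Lemma \ref{l2}. You needed quasi-greediness because you implicitly read ``conservative'' as a condition on pairs with $|A|=|B|$ (as in democracy), which forces you to extract $D\subset B$ with $|D|=|A|$ and then pass from $\|1_D\|$ back to $\|1_B\|$. But conservative, per Definition \ref{d50}, already covers all $A<B$ with $|A|\leqslant|B|$; since $\lambda\geqslant 1$ gives $|A|\leqslant\lambda|A|\leqslant|B|$, conservativity applies directly to the pair $(A,B)$ and the forward implication is immediate with the same constant $\mathbf C_c$ --- no subset $D$, no quasi-greediness. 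As written, your proof establishes the lemma only for quasi-greedy bases, a strictly weaker statement than the one claimed (it happens that every invocation in Theorem \ref{m5} occurs where quasi-greediness is available, but the lemma and the author's accompanying remark assert the unconditional version). The fix is to delete the quasi-greedy machinery from the forward direction and replace it with the one-line observation above.
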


The proof of Lemma \ref{l50} is similar to the proof of Lemma \ref{l2}. However, in Lemma \ref{l50}, we do not need the assumption that $\mathcal{B}$ is quasi-greedy. The reason lies in how conservative bases were first defined by Dilworth, Kalton, Kutzarova, and Temlyakov (see \cite[(3.7)]{DKKT}), where the authors allow the cardinality of  the two sets $A$ and $B$ to be different. In contrast, when democracy was first defined by Konyagin and Temlyakov, the authors require the cardinality of the two sets to be the same. Of course, for quasi-greedy bases, whether $|A|\leqslant |B|$ or $|A| = |B|$ makes no difference.

\begin{proof}[Proof of Theorem \ref{m5}]
By \cite[Theorem 4.2]{B0}, we have the equivalence between any pair of i), ii), and iii). Furthermore, Lemma \ref{l50} gives the equivalence between ii) and iv).

To see that v) implies ii), fix $\lambda_0\geqslant 1$. Observe that left $\lambda_0$-SLC gives $\lambda_0$-conservative, which implies conservative by Lemma \ref{l50}. 

We also have that i) implies v). Indeed, by \cite[Theorem 3.2]{K}, we know that a strong partially greedy basis is quasi-greedy and is LSLC and thus, is $\lambda$-LSLC for all $\lambda \geqslant 1$.

We prove that ii) implies  vi). Assume ii) with $\mathcal{B}$ being suppression quasi-greedy w.const $\mathbf C_\ell$. Let $\lambda\geqslant 1$. By \cite[Theorem 2.3]{DK}, we know that $\mathcal{B}$ is $1$-DK partially greedy; that is, there exists $C\geqslant 1$ such that $\forall x\in X, \forall m\in \mathbb{N}_0, \forall \mathcal{U}_{X} = (\rho_x)_{x\in X}$,
    $$\left\|x-\sum_{n=1}^{m} e^*_{\rho_x(n)}(x)e_{\rho_x(n)}\right\|\ \leqslant\ C\sigma^{\mathcal{U}_X, L}_m(x).$$
    For all $x\in X$, $m\in\mathbb{N}_0$, and $\mathcal{U}_{X} = (\rho_x)_{x\in X}$, we, therefore, have
   \begin{align*}\left\|x-\sum_{n=1}^{\lceil \lambda m\rceil} e^*_{\rho_x(n)}(x)e_{\rho_x(n)}\right\|&\ \leqslant\ \mathbf C_\ell\left\|x-\sum_{n=1}^{m} e^*_{\rho_x(n)}(x)e_{\rho_x(n)}\right\|\\
   &\ \leqslant\ \mathbf C_\ell C\sigma^{\mathcal{U}_X, L}_m(x).\end{align*}
    Hence, $\mathcal{B}$ is $\lambda$-DK partially greedy.
 
 It remains to prove that vi) implies ii). Assume that for some $\lambda\geqslant 1$, $\mathcal{B}$ is $\lambda$-DK partially greedy w.const $\mathbf C_{\lambda, dkp}$. Let $x\in X$ and $A\in \mathcal{G}(x, \lceil \lambda m\rceil)$. Choose $\mathcal{U}_X$ such that for $\rho_x\in \mathcal{U}_X$, $A = \{\rho_x(1),\ldots,\rho_x(\lceil \lambda m \rceil)\}$. Then
    $$\|x-P_A(x)\|\ \leqslant\ \mathbf C_{\lambda, dkp}\sigma^{\mathcal{U}_x, L}_{m}(x)\ \leqslant\ \mathbf C_{\lambda, dkp}\|x\|.$$
    By Lemma \ref{l2}, we know that $\mathcal{B}$ is quasi-greedy.
    
    Next, we show that $\mathcal{B}$ is  conservative. Let $A, B\in\mathbb{N}^{<\infty}$ with $\lambda |A|\leqslant |B|$ and $A < B$. Choose $E\subset B$ such that $|E| = \lceil\lambda m\rceil$, where $m:= |A|$. Form $z:= 1_A + 1_{B\backslash E} + 1_{E}$ and choose $\mathcal{U}_{X}$ such that for $\rho_z\in \mathcal{U}_{X}$, $E = \{\rho_z(1), \ldots, \rho_z(\lceil\lambda m\rceil)\}$. By above, we can assume that $\mathcal{B}$ is suppression quasi-greedy w.const $\mathbf C_\ell$. We have
    \begin{align*}\|1_A\|&\ = \ \|z-1_{B\backslash E} - P_E(z)\|\\
    &\ \leqslant\ \mathbf C_\ell \|z-P_E(z)\|\ \leqslant\ \mathbf C_\ell \mathbf C_{\lambda, dkp}\sigma^{\mathcal{U}_X, L}_m(z)
    \ \leqslant\ \mathbf C_\ell \mathbf C_{\lambda, dkp}\|1_B\|.\end{align*}
   Hence, $\mathcal{B}$ is $\lambda$-conservative, which implies that $\mathcal{B}$ is conservative according to Lemma \ref{l50}.

\end{proof}

\subsection{Characterizations of reverse partially greedy bases}

Given a greedy collection $\mathcal{U}_{X}$, we define
$${\sigma}^{\mathcal{U}_X, R}_m(x)\ :=\ \inf\left\{\|x-P_{A}(x)\|\,:\, |A|\leqslant m, \min A > \rho_x(m)\right\}.$$

\begin{defi}\normalfont
A basis $\mathcal{B}$ is said to be $\lambda$-reverse partially greedy if there exists $C\geqslant 1$ such that
$$\left\|x-\sum_{n=1}^{\lceil \lambda m\rceil} e^*_{\rho_x(n)}(x)e_{\rho_x(n)}\right\|\ \leqslant\ C\sigma^{\mathcal{U}_X, R}_m(x), \forall x\in X, \forall m\in \mathbb{N}_0, \forall \mathcal{U}_{X} = (\rho_x)_{x\in X}.$$
The least $C$ is denoted by $\mathbf{C}_{\lambda, rp}$. Note that when $\lambda = 1$, we retrieve the definition of reverse partially greedy bases given by Dilworth and Khurana \cite{DK}.
\end{defi}

\begin{defi}\label{d60}\normalfont
A basis $\mathcal{B}$ is $\lambda$-reverse conservative if for all $A, B\in\mathbb{N}^{<\infty}$ with $\lambda |A| \leqslant |B|$ and $B < A$, we have
$$\|1_A\|\ \leqslant\ C\|1_B\|.$$
The least $C$ is denoted by $\mathbf C_{\lambda, rc}$. When $\lambda = 1$, we say that our basis is reverse conservative.
\end{defi}

\begin{defi}\label{d61}\normalfont
A basis $\mathcal B$ is said to be $\lambda$-right-skewed for largest coefficients (RSLC)  if there exists a constant $C\geqslant 1$ such that
\begin{equation}\label{e80}\|x+1_{\varepsilon A}\|\ \leqslant\ C\|x+1_{\delta B}\|,\end{equation}
for all $x\in X$ with $\|x\|_\infty\leqslant 1$, for all $A, B\in\mathbb{N}^{<\infty}$ with $B < A$, $\lambda |A|\leqslant |B|$ and $A\sqcup B\sqcup x$, and for all signs $\varepsilon, \delta$. The least constant $C$ verifying \eqref{e80} is denoted by $\mathbf C_{\lambda, rib}$. When $\lambda = 1$, we say that $\mathcal{B}$ is RSLC. 
\end{defi}

\begin{thm}\label{m8}
Let $\mathcal{B}$ is a basis. The following are equivalent
\begin{enumerate}
    \item[i)] $\mathcal{B}$ is reverse partially greedy.
    \item[ii)] $\mathcal{B}$ is $\lambda$-reverse partially greedy for all (some) $\lambda \geqslant 1$.
    \item[iii)] $\mathcal{B}$ is quasi-greedy and is reverse conservative.
    \item[iv)] $\mathcal{B}$ is quasi-greedy and is RSLC.
    \item[v)] $\mathcal{B}$ is quasi-greedy and is $\lambda$-reverse conservative for all (some) $\lambda\geqslant 1$.
    \item[vi)] $\mathcal{B}$ is quasi-greedy and has $\lambda$-RSLC for all (some) $\lambda\geqslant 1$.
\end{enumerate}
\end{thm}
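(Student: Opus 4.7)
The plan is to mirror the proof of Theorem \ref{m5}, replacing every left-skewed object with its right-skewed analog. The equivalence (i) $\Leftrightarrow$ (iii) is already a characterization of reverse partially greedy bases in \cite{DK}. I would then establish two parallels of Lemmas \ref{l40} and \ref{l50}, use them to derive (iii) $\Leftrightarrow$ (iv) $\Leftrightarrow$ (v) $\Leftrightarrow$ (vi), and finally handle (ii) $\Leftrightarrow$ (iii).

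First, I would prove a reverse analog of Lemma \ref{l40}: \emph{if $\mathcal{B}$ is quasi-greedy, then $\mathcal{B}$ is $\lambda$-RSLC if and only if it is $\lambda$-reverse conservative}. Setting $x = 0$ in Definition \ref{d61} yields one direction. For the converse, given $x, A, B, \varepsilon, \delta$ with $B < A$, $\lambda|A|\leqslant |B|$, and $A\sqcup B\sqcup x$, the triangle inequality together with quasi-greediness (applied to the greedy set $B$ of $x+1_{\delta B}$) and $\lambda$-reverse conservative gives
$$\|x+1_{\varepsilon A}\|\leqslant \|x\| + \|1_{\varepsilon A}\|\leqslant \mathbf C_\ell\|x+1_{\delta B}\| + \mathbf C_{\lambda, rc}\|1_{\delta B}\|\leqslant \bigl(\mathbf C_\ell + \mathbf C_{\lambda, rc}(1+\mathbf C_\ell)\bigr)\|x+1_{\delta B}\|.$$
Next, the reverse analog of Lemma \ref{l50}: reverse conservative is equivalent to $\lambda$-reverse conservative for all (some) $\lambda \geqslant 1$. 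The proof copies that of Lemma \ref{l50}, and as explained in the paragraph preceding Theorem \ref{m5}, no quasi-greedy hypothesis is required because Definition \ref{d60} already permits $|A|$ and $|B|$ to differ.

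These two lemmas, together with (i) $\Leftrightarrow$ (iii), immediately yield (iii) $\Leftrightarrow$ (iv) and (iii) $\Leftrightarrow$ (v), whence also (iv) $\Leftrightarrow$ (vi). For (iii) $\Rightarrow$ (ii), apply (i) to obtain a constant $C$ satisfying the $1$-reverse partially greedy inequality; then for any $\lambda\geqslant 1$, since $\lceil\lambda m\rceil\geqslant m$, suppression quasi-greediness gives
$$\left\|x-\sum_{n=1}^{\lceil\lambda m\rceil}e^*_{\rho_x(n)}(x)e_{\rho_x(n)}\right\|\leqslant \mathbf C_\ell\left\|x-\sum_{n=1}^{m}e^*_{\rho_x(n)}(x)e_{\rho_x(n)}\right\|\leqslant \mathbf C_\ell C\, \sigma^{\mathcal{U}_X, R}_m(x).$$

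For (ii) $\Rightarrow$ (iii), taking $A = \emptyset$ in the infimum defining $\sigma^{\mathcal{U}_X, R}_m$ gives $\sigma^{\mathcal{U}_X, R}_m(x)\leqslant \|x\|$, so Lemma \ref{l4} yields quasi-greediness. To obtain reverse conservative, I mimic the (vi) $\Rightarrow$ (ii) construction in Theorem \ref{m5}: given $B < A$ with $\lambda|A|\leqslant |B|$, set $m = |A|$, pick $E\subset B$ with $|E| = \lceil\lambda m\rceil$, and form $z = 1_A + 1_{B\setminus E} + 1_E$ with $\mathcal{U}_X$ chosen so that $\rho_z$ enumerates $E$ first. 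Because $\lambda \geqslant 1$ forces $\lceil\lambda|A|\rceil \geqslant |A|$, we have $\rho_z(m) \in E \subset B$, so $\min A > \rho_z(m)$ and taking $F = A$ in the infimum gives $\sigma^{\mathcal{U}_X, R}_m(z)\leqslant \|1_B\|$. The hypothesis then yields $\|1_A + 1_{B\setminus E}\|\leqslant \mathbf C_{\lambda, rp}\|1_B\|$, and a final quasi-greediness step (using that $A$ is a greedy set of $1_A + 1_{B\setminus E}$) gives $\|1_A\|\leqslant \mathbf C_\ell \mathbf C_{\lambda, rp}\|1_B\|$; the Lemma \ref{l50} analog then upgrades $\lambda$-reverse conservative to reverse conservative. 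The main obstacle is precisely this final construction: one must verify that $\rho_z$ and $m = |A|$ make the reverse comparison valid (i.e., that $A$ lies strictly to the right of $\rho_z(m)$), and the hypothesis $\lambda \geqslant 1$ is exactly sharp for this to be possible.
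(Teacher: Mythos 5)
Your proposal is correct and follows essentially the same route as the paper: the appeal to Dilworth--Khurana for (i)$\Leftrightarrow$(iii), the analog of Lemma \ref{l50} (which the paper states as Lemma \ref{l70}), the suppression-quasi-greedy enlargement step for (i)$\Rightarrow$(ii), and the construction $z = 1_A + 1_{B\setminus E} + 1_E$ with $\rho_z$ enumerating $E$ first for (ii)$\Rightarrow$(iii) all coincide with the paper's argument, including your verification that $\rho_z(m)\in E\subset B < A$ so that $A$ is admissible in the infimum defining $\sigma^{\mathcal{U}_X,R}_m(z)$. The one place you diverge is the link (iii)$\Leftrightarrow$(iv)$\Leftrightarrow$(vi): you prove a self-contained reverse analog of Lemma \ref{l40} (quasi-greedy $+$ $\lambda$-reverse conservative $\Rightarrow$ $\lambda$-RSLC via the triangle inequality and the fact that $B$ is a greedy set of $x+1_{\delta B}$), whereas the paper simply cites \cite[Theorem 3.12]{K} for (i)$\Leftrightarrow$(iv) and then gets (iv)$\Rightarrow$(vi) for free from the definitions; your argument is valid and makes the proof more self-contained, at the cost of inheriting the same small imprecision as the paper's Lemma \ref{l40}, namely passing from the signed sum $\|1_{\varepsilon A}\|$ to the unsigned reverse-conservative bound without comment (harmless for quasi-greedy bases, up to a constant). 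One cosmetic slip: in the final step $\|1_A\|\leqslant \mathbf C_\ell\|1_A+1_{B\setminus E}\|$ the greedy set being \emph{removed} is $B\setminus E$, not $A$; since every coefficient of $1_A+1_{B\setminus E}$ equals $1$, every subset of its support is a greedy set, so the conclusion and the constant are unaffected.
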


Using the same argument as in the proof of Lemma \ref{l2}, we have
\begin{lem}\label{l70}
A basis $\mathcal{B}$ is reverse conservative if and only if $\mathcal{B}$ is $\lambda$-reverse conservative for all (some) $\lambda\geqslant 1$. 
\end{lem}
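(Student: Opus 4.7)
The plan is to adapt the proof of Lemma \ref{l2} to the reverse conservative setting. As the author already observes in the paragraph preceding the proof of Theorem \ref{m5} (with reference to Lemma \ref{l50}), the quasi-greediness assumption from Lemma \ref{l2} is not needed here: because Definition \ref{d60} at $\lambda=1$ already allows $|A|\leqslant |B|$ rather than insisting on $|A|=|B|$, we can bypass the step in Lemma \ref{l2} that replaces $B$ by a subset of matching cardinality at the cost of an extra factor of $\mathbf C_\ell$.

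The trivial direction is: reverse conservative immediately gives $\lambda$-reverse conservative with the same constant for every $\lambda\geqslant 1$, since the hypothesis $\lambda|A|\leqslant|B|$ forces $|A|\leqslant|B|$. For the converse, I would fix some $\lambda\geqslant 1$ for which $\mathcal B$ is $\lambda$-reverse conservative w.const $\mathbf C_{\lambda, rc}$ and consider arbitrary $A, B\in\mathbb{N}^{<\infty}$ with $|A|\leqslant|B|$ and $B<A$. When $|A|\geqslant 2\lambda$, partition $A$ into $v=\lceil 2\lambda\rceil$ consecutive blocks $A_1<A_2<\cdots<A_v$, each of size at most $|A|/(2\lambda)+1\leqslant|A|/\lambda\leqslant|B|/\lambda$. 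Since every $A_n\subset A$, the ordering $B<A_n$ is automatic, and $\lambda|A_n|\leqslant|B|$ by construction. Applying $\lambda$-reverse conservativity to each block and using the triangle inequality yields
$$\|1_A\|\ \leqslant\ \sum_{n=1}^{v}\|1_{A_n}\|\ \leqslant\ \lceil 2\lambda\rceil\,\mathbf C_{\lambda, rc}\,\|1_B\|.$$
The remaining small case $|A|<2\lambda$ is dispatched via M-boundedness and semi-normalization: $\|1_A\|<2\lambda\sup_n\|e_n\|$, while picking any $j\in B$ gives $1=|e_j^*(1_B)|\leqslant\sup_n\|e_n^*\|_*\|1_B\|$, so $\|1_A\|$ is bounded by a constant multiple of $\|1_B\|$ independent of $A, B$.

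I do not expect a serious obstacle here; the only bookkeeping issue is choosing the number of blocks $v$ large enough (hence $\lceil 2\lambda\rceil$ rather than $\lceil\lambda\rceil$) to absorb the rounding overhead of $+1$ in the block sizes, which is precisely the device used in Lemma \ref{l2}. The proof is essentially a transcription of that argument, with the ordering condition $B<A_n$ coming for free from $A_n\subset A$ and with the extra $\mathbf C_\ell$ factor suppressed thanks to the flexibility built into Definition \ref{d60}.
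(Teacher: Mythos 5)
Your proof is correct and matches the paper's intent exactly: the paper proves Lemma \ref{l70} by simply invoking the argument of Lemma \ref{l2}, and your write-up is precisely that partition argument, with the correct observation that the ordering $B<A_n$ is inherited by subsets of $A$ and that quasi-greediness is unnecessary because Definition \ref{d60} already permits $|A|\leqslant|B|$ at $\lambda=1$.
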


\begin{proof}[Proof of Theorem \ref{m8}]
By \cite[Theorem 2.7]{DK} and \cite[Theorem 3.12]{K}, we have the equivalence between any pair of (i), (iii), and (iv). The equivalence between (iii) and (v) follows from Lemma \ref{l70}.

We prove that (i) implies (ii). Assume that $\mathcal{B}$ is reverse partially greedy w.const $\mathbf C_{1, rp}$. Fix $\lambda > 1$. We need to show that $\mathcal{B}$ is $\lambda$-reverse partially greedy. Since i) is equivalent to iii), we can assume that $\mathcal{B}$ is suppression quasi-greedy w.const $\mathbf{C}_\ell$. Pick an arbitrary collection $\mathcal{U}_{X} = (\rho_x)_{x\in X}$. Let $x\in X$ and $m\in\mathbb{N}_0$. We have
    \begin{align*}\left\|x-\sum_{n=1}^{\lceil\lambda m\rceil}e^*_{\rho_x(n)}(x)e_{\rho_x(n)}\right\|&\ \leqslant\ \mathbf C_\ell\left\|x-\sum_{n=1}^{m}e^*_{\rho_x(n)}(x)e_{\rho_x(n)}\right\|\\
    &\ \leqslant\ \mathbf C_\ell\mathbf C_{1, rp}\sigma_m^{\mathcal{U}_{X}, R}(x).\end{align*}
    Therefore, $\mathcal{B}$ is $\lambda$-reverse partially greedy.

To see that ii) implies iii), we assume that $\mathcal{B}$ is $\lambda$-reverse partially greedy w.const $\mathbf C_{\lambda, rp}$ for some $\lambda\geqslant 1$. Let $x\in X$ and $A\in \mathcal{G}(x, \lceil \lambda m\rceil)$. Choose $\mathcal{U}_X$ such that for $\rho_x\in \mathcal{U}_X$, $A = \{\rho_x(1),\ldots,\rho_x(\lceil \lambda m \rceil)\}$. Then
    $$\|x-P_A(x)\|\ \leqslant\ \mathbf C_{\lambda, rp}\sigma^{\mathcal{U}_x, R}_{m}(x)\ \leqslant\ \mathbf C_{\lambda, rp}\|x\|.$$
    By Lemma \ref{l2}, we know that $\mathcal{B}$ is quasi-greedy. Next, we show that $\mathcal{B}$ is reverse conservative. Let $A, B\in\mathbb{N}^{<\infty}$ with $\lambda |A|\leqslant |B|$ and $B < A$. Choose $E\subset B$ such that $|E| = \lceil\lambda m\rceil$, where $m:= |A|$. Form $z:= 1_A + 1_{B\backslash E} + 1_{E}$ and choose $\mathcal{U}_{X}$ such that for $\rho_z\in \mathcal{U}_{X}$, $E = \{\rho_z(1), \ldots, \rho_z(\lceil\lambda m\rceil)\}$. By above, we can assume that $\mathcal{B}$ suppression quasi-greedy w.const is $\mathbf C_\ell$. We have
    \begin{align*}\|1_A\|\ = \ \|z-1_{B\backslash E} - P_E(z)\|&\ \leqslant\ \mathbf C_\ell \|z-P_E(z)\|\\
    &\ \leqslant\ \mathbf C_\ell \mathbf C_{\lambda, rp}\sigma^{\mathcal{U}_X, R}_m(z)
    \ \leqslant\ \mathbf C_\ell \mathbf C_{\lambda, rp}\|1_B\|.\end{align*}
   Hence, $\mathcal{B}$ is $\lambda$-reverse conservative, which implies that $\mathcal{B}$ is reverse conservative according to Lemma \ref{l70}.
   
  Furthermore, that iv) implies vi) is immediate from the definition of $\lambda$-RSLC. 
 
 Finally,  if $\mathcal{B}$ is $\lambda_0$-RSLC for some $\lambda_0\geqslant 1$, then $\mathcal{B}$ is $\lambda_0$-reverse conservative, which implies reverse conservative by Lemma \ref{l70}. Hence, vi) implies iii).
\end{proof}

\begin{rek}\normalfont
As we have seen, for $\lambda > 1$, ($\lambda$, partially greedy) is strictly weaker than strong partially greedy. In a similar manner, is there a notion of ($\lambda$, reverse partially greedy) that is strictly weaker than reverse partially greedy? Recently, the author gives a positive answer in \cite{Chu}.
\end{rek}

\ \\
\end{document}